\newcommand*\linenomathpatch[1]{%
	\expandafter\pretocmd\csname #1\endcsname {\linenomath}{}{}%
	\expandafter\pretocmd\csname #1*\endcsname{\linenomath}{}{}%
	\expandafter\apptocmd\csname end#1\endcsname {\endlinenomath}{}{}%
	\expandafter\apptocmd\csname end#1*\endcsname{\endlinenomath}{}{}%
}
\newcommand*\linenomathpatchAMS[1]{%
	\expandafter\pretocmd\csname #1\endcsname {\linenomathAMS}{}{}%
	\expandafter\pretocmd\csname #1*\endcsname{\linenomathAMS}{}{}%
	\expandafter\apptocmd\csname end#1\endcsname {\endlinenomath}{}{}%
	\expandafter\apptocmd\csname end#1*\endcsname{\endlinenomath}{}{}%
}
\let\linenomathAMS\linenomathWithnumbers
\patchcmd\linenomathAMS{\advance\postdisplaypenalty\linenopenalty}{}{}{}
\let\linenomathAMS\linenomathNonumbers
\theoremstyle{plain}
\newtheorem{theorem}{Theorem}[section]
\crefname{theorem}{Theorem}{Theorems}
\newtheorem{proposition}[theorem]{Proposition}
\crefname{proposition}{Proposition}{Propositions}
\crefname{corollary}{Corollary}{Corollaries}
\newtheorem{lemma}[theorem]{Lemma}
\crefname{lemma}{Lemma}{Lemmas}
\newtheorem{conjecture}[theorem]{Conjecture}
\crefname{conjecture}{Conjecture}{Conjectures}
\newtheorem{problem}[theorem]{Problem}
\crefname{problem}{Problem}{Problem}
\newtheorem{claim}[theorem]{Claim}
\crefname{claim}{Claim}{Claims}
\newtheorem{observation}[theorem]{Observation}
\crefname{observation}{Observation}{Observations}
\crefname{setup}{Setup}{Setups}
\crefname{fact}{Fact}{Facts}
\crefname{algorithm}{Algorithm}{Algorithms}
\crefname{remark}{Remark}{Remarks}
\crefname{example}{Example}{Examples}
\theoremstyle{definition}
\newtheorem{definition}[theorem]{Definition}
\crefname{definition}{Definition}{Definitions}
\newtheorem{construction}[theorem]{Construction}
\crefname{construction}{Construction}{Constructions}
\crefname{question}{Question}{Questions}
\crefname{section}{Section}{Sections}
\crefname{figure}{Figure}{Figures}
\numberwithin{equation}{section}
\crefname{enumi}{}{}
\definecolor{VividOrange}{HTML}{F15918}
\DeclarePairedDelimiter\abs{\lvert}{\rvert}
\renewcommand{\epsilon}{\varepsilon}
\renewcommand{\subset}{\subseteq}
\def\leq{\leqslant}
\def\geq{\geqslant}
\newcommand{\cH}{\mathcal{H}}
\newcommand{\OO}{\mathcal{O}}
\newcommand{\cV}{\mathcal{V}}
\newenvironment{proofclaim}[1][Proof of the claim]{\begin{proof}[#1]}{\end{proof}}
\title[]{Ore- and Pósa-type conditions \\for partitioning $2$-edge-coloured graphs \\into monochromatic cycles}
\date{\today}
\author[P.~Arras]{Patrick Arras}
\address[P.~Arras]{Universität Heidelberg,
	Institut für Informatik,
	Im Neuenheimer Feld 205,
	69120 Heidelberg, Germany}
\email{arras@informatik.uni-heidelberg.de}
\begin{document}
\begin{abstract}
In 2019, Letzter confirmed a conjecture of Balogh, Barát, Gerbner, Gyárfás, and Sárközy, proving that every large $2$-edge-coloured graph $G$ on $n$ vertices with minimum degree at least $3n/4$ can be partitioned into two monochromatic cycles of different colours.
Here, we propose a weaker condition on the degree sequence of $G$ to also guarantee such a partition and prove an approximate version.
This resembles a similar generalisation to an Ore-type condition achieved by Barát and Sárközy.

Continuing work by Allen, Böttcher, Lang, Skokan, and Stein, we also show that if $\deg(u) + \deg(v) \geq 4n/3 + o(n)$ holds for all non-adjacent vertices $u,v \in V(G)$, then all but $o(n)$ vertices can be partitioned into three monochromatic cycles.
\end{abstract}

\maketitle
\thispagestyle{empty}
\vspace{-0.4cm}

\section{Introduction}\label{sec:intro}

\subsection{Background}\label{subsec:background}
The initial spark of what has today become the sizeable field of research into monochromatic cycle covers can be found in a four-page paper by Gerencsér and Gyárfás~\cite{GG67} from 1967: In a seemingly innocent footnote, they mention that every $2$-edge-coloured complete graph $K_n$ can be covered by two vertex-disjoint paths of different colours. Inspired by this simple observation, Lehel~\cite{Aye79} conjectured that the statement would still hold replacing the term \emph{path} with \emph{cycle}; provided the latter includes edges, vertices and the empty set.\footnote{Throughout this work, we will also use the term \emph{cycle} this way without any further mention.} This conjecture remained unsolved for about 20 years, when it was finally confirmed for large $n$ by \L{}uczak, Rödl{}, and Szemerédi~\cite{LRS98}. The restriction to graphs of large order came from the use of Szemerédi's regularity lemma, but could later be relaxed by Allen~\cite{All08} and then completely removed by Bessy and Thomassé~\cite{BT10}, both finding proofs not relying on regularity arguments.

Ensuing research modified the setting above in multiple directions. Firstly, Erd\H{o}s, Gy\'arf\'as, and Pyber~\cite{EGP91} varied the number of colours. In particular, they established $\OO(r^2 \log r)$ as an upper bound for the number of monochromatic cycles needed to partition an $r$-edge-coloured complete graph $K_n$. Moreover, they conjectured that $r$ colours might even suffice, which follows from Lehel's conjecture for $r = 2$, but was later refuted for all $r \geq 3$ by Pokrovskiy~\cite{Pok14}. So far, the best improvement of the upper bound is due to Gy\'arf\'as,  Ruszink\'o,  S\'arközy, and Szemer\'edi~\cite{GRSS06}, who were able to lower it to $\OO(r \log r)$, provided $n$ is large in terms of $r$. According to Conlon and Stein~\cite{CS16}, Lang and Stein~\cite{LS17} as well as a recent paper by S\'arközy~\cite{Sar20}, the same can be achieved for local $r$-edge-colourings, where the colour limit only applies to the incident edges of each vertex. Related areas of research also considered hypergraphs~\cite{BCF+20,BHS16,GLL+21,GS13,LP23,Sar14} and infinite graphs~\cite{BP20,ESSS17,Rad78,Sou17} as host graphs. Alternatively, one may look at not only partitions into cycles, but also into monochromatic paths~\cite{GG67,Pok14}, powers of cycles~\cite{Sar17,Sar22}, regular graphs~\cite{SS00,SSS11}, graphs of bounded degree~\cite{CM21,GJ21,GS16-bounded}, or arbitrary connected graphs~\cite{BD17,EGP91,FFGT12,GLS19,HK96}.

The second main modification was relaxing the completeness requirement on the host graph. Originally suggested in a posthumous paper by Schelp~\cite{Sch12}, imposing a lower bound on the minimum degree was considered as a replacement, which bears some resemblance to Dirac's theorem~\cite{Dir52}. Indeed, Balogh, Barát, Gerbner, Gyárfás, and Sárközy~\cite{BBG+14} conjectured that for an $n$-vertex $2$-edge-coloured host graph, a minimum degree above $3n/4$ would still suffice to guarantee a partition into two monochromatic cycles of different colours. Constructions show that this would be optimal. In support of their conjecture, they proved an approximate version that required minimum degree $3n/4 + o(n)$ and only guaranteed that all but at most $o(n)$ vertices could be covered. Since then, the two error terms have been gradually eliminated by DeBiasio and Nelsen~\cite{DN17} as well as Letzter~\cite{Let19}, both using advanced absorbing techniques. One should note that other density measures for the host graph have also been examined, such as prescribing its independence number~\cite{BBG+14,Sar11,SSS11} or considering complete multipartite~\cite{Hax97,LSS17} as well as random graphs~\cite{BD17,KMN+18,LL21}.

\subsection{Main results}\label{subsec:main}
In this work, we continue two developments initiated by~\cite{BBG+14}. As its natural Ore-type analogue, Barát and Sárközy~\cite{BS16} proved that the following holds for all large $2$-edge-coloured graphs $G$ on $n$ vertices: If $G$ satisfies $\deg(u) + \deg(v) \geq 3n/2 + o(n)$ for all $uv \notin E(G)$, then there are two vertex-disjoint and distinctly coloured monochromatic cycles in $G$, which together cover at least $n - o(n)$ vertices. 

We take this one step further and propose a Pósa-type condition, owing its name to a Hamiltonicity condition given by Pósa~\cite{Pos62}: Every graph on $n \geq 3$ vertices whose degree sequence $d_1 \leq \ldots \leq d_n$ satisfies $d_i > i$ for all $1 \leq i < n/2$ contains a Hamilton cycle.\footnote{Choosing $x = 0$ in \cref{prop:Ore implies Posa (general)}, one can see that this already implies Ore's theorem~\cite{Ore60}.} We conjecture that the following stronger condition guarantees a partition of any large $2$-edge-coloured graph into two monochromatic cycles of different colours.

\begin{conjecture}\label{conj:main-two}
	There is $n_0$ such that the following holds for all $2$-edge-coloured graphs $G$ on $n \geq n_0$ vertices: If the degree sequence $d_1 \leq \ldots \leq d_n$ of $G$ satisfies $d_i > i + n/2$ for all $1 \leq i < n/4$, then there is a partition of $V(G)$ into two distinctly coloured monochromatic cycles.
\end{conjecture}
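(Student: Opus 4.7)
The plan is to prove \cref{conj:main-two} via the absorption-plus-regularity recipe that Letzter used in the minimum-degree case, with a preliminary analysis isolating how many low-degree vertices $G$ can actually have. Observe that the Pósa hypothesis $d_i > i + n/2$ for $1 \leq i < n/4$ has two immediate consequences: first, $d_1 > n/2 + 1$, so $G$ has minimum degree at least $n/2 + 2$; second, setting $i = \lceil n/4 \rceil - 1$ shows that fewer than $n/4$ vertices have degree below $3n/4$. Write $L$ for the set of these ``low-degree'' vertices and $H = V(G) \setminus L$ for the ``bulk'', on which Letzter's minimum-degree threshold is essentially met. The strategy is to run the minimum-degree machinery on $H$ and argue that the $L$-vertices can be incorporated into the resulting cycles without disruption.

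First I would build, in advance, two monochromatic absorbing paths $P_1, P_2$ of different colours and total length $o(n)$, whose union can swallow any sufficiently small set of surplus vertices while preserving its endpoints in each colour. The standard construction of absorber gadgets via common colour-neighbourhoods applies: for any $v \in V(G)$ the bound $\deg(v) \geq n/2 + 2$ yields at least $n/4$ neighbours in a common colour, which is enough to extract many vertex-disjoint absorbing gadgets. Next, remove $V(P_1) \cup V(P_2)$ and apply Szemerédi's regularity lemma to the remainder in both colour classes. The Pósa condition transfers to the reduced graph $R$ in an averaged form: all but an $o(1)$ fraction of clusters have reduced degree at least $(3/4 - o(1)) \abs{R}$. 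Running the structural analysis of Letzter (or Balogh et al., in its approximate version) on $R$ should then produce two vertex-disjoint monochromatic connected matchings $M_1, M_2$ of different colours covering all but $o(\abs{R})$ clusters, which lift via the blow-up lemma to monochromatic cycles $C_1, C_2$ covering all but $o(n)$ vertices of $G$. Splicing in the absorbing paths and invoking their absorbing property then delivers the desired partition.

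The principal obstacle is precisely the degree-sequence flexibility the Pósa condition allows: up to $n/4$ vertices may have degree as low as $n/2 + 2$, and these must be placed as internal vertices of $C_1$ or $C_2$ rather than treated as leftover. Making the regularity step genuinely see these vertices calls for an extremal dichotomy. In the non-extremal case, the vertices of $L$ are well-spread through $H$ with ``typical'' colour-neighbourhoods, so they sit comfortably inside the approximate cover, and the absorbers built above can mop up the residual $o(n)$ vertices. In the near-extremal case, the Pósa inequality is nearly tight for many $i < n/4$, which should force $G$ to be structurally close to a known extremal configuration (for instance a 2-coloured blow-up of a small template), where one can exhibit the partition directly. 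Identifying the correct notion of ``extremal'', controlling the boundary between the two regimes, and ensuring that the transferred degree-sequence condition on $R$ is strong enough to drive the connected-matching argument, together constitute the main technical hurdle.
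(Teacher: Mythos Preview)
The statement you are trying to prove is labelled a \emph{Conjecture} in the paper, and the paper does not prove it; only the approximate version (\cref{thm:main-two}) is established there. So there is no ``paper's own proof'' to compare against, and your proposal is an attempted attack on an open problem.

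That said, your outline contains a concrete error that already breaks the approximate version. You assert that after regularisation ``all but an $o(1)$ fraction of clusters have reduced degree at least $(3/4 - o(1))\abs{R}$''. This does not follow from the P\'osa hypothesis. The condition permits up to $n/4$ vertices of degree barely above $n/2$, and nothing prevents these from occupying roughly a quarter of the clusters; hence up to a $1/4$ fraction of $R$ may have reduced degree only around $\abs{R}/2$. Consequently you cannot invoke Letzter's or Balogh et al.'s structural analysis, which genuinely needs the $3/4$ minimum-degree threshold on (almost all of) the reduced graph. The paper's route for \cref{thm:main-two} is precisely to avoid this reduction: it shows that the P\'osa condition itself transfers to $R$ (as a P\'osa condition, not a minimum-degree one) and then proves a new structural lemma (\cref{lem:structural-two}) tailored to that weaker hypothesis, producing a spanning union of a red and a blue component with no contracting set.

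For the exact conjecture your absorption step is also underspecified. An absorber for a vertex $v$ of degree only $n/2 + 2$ cannot be built from the observation ``$v$ has $n/4$ neighbours in some colour'': Letzter's gadgets rely on many common neighbours in the \emph{right} monochromatic component, and with $\delta(G)$ near $n/2$ such common neighbourhoods can be empty. Your extremal/non-extremal dichotomy may well be the right shape, but as written it is a hope rather than an argument; in particular you have not identified the extremal configurations (the paper's \cref{construction:conj-two} gives a family to test any such scheme against).
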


Unlike for Hamiltonicity, there is no easy link anymore between this Pósa-type condition and the Ore-type condition in~\cite{BS16}. We address this fact in \cref{sec:constructions}, also providing a construction to show that each inequality required here is essentially tight. Our first main result is the following approximate version of \cref{conj:main-two}.

\begin{theorem}\label{thm:main-two}
	For every $\beta > 0$, there is $n_0(\beta)$ such that the following holds for all $2$-edge-coloured graphs $G$ on $n \geq n_0(\beta)$ vertices: If the degree sequence $d_1 \leq \ldots \leq d_n$ of $G$ satisfies $d_i > i + (1/2 + \beta)n$ for all $1 \leq i < n/4$, then there are two vertex-disjoint and distinctly coloured monochromatic cycles in $G$, which together cover at least $(1 - \beta)n$ vertices.
\end{theorem}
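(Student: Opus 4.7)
The overall strategy is the by-now-standard regularity plus connected matchings framework that underpins~\cite{BBG+14,BS16,Let19} and related results on monochromatic cycle covers in graphs of high density. A first thing to check, however, is that we really cannot reduce \cref{thm:main-two} to the Ore-type theorem of Barát and Sárközy in~\cite{BS16}: indeed, two non-adjacent vertices $v_i, v_j$ with ranks $i$ close to $1$ and $j$ close to $n/4$ satisfy only $d(v_i) + d(v_j) > i + j + (1 + 2\beta)n \approx (5/4 + 2\beta)n$, well below the $3n/2$ threshold of~\cite{BS16}. So the Pósa-type hypothesis must be exploited directly in the reduced graph.

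Concretely, I would first apply Szemerédi's regularity lemma to both colour classes of $G$, with parameters $\epsilon \ll d \ll \beta$, and pass to a reduced multigraph $R$ on $m$ clusters whose edges are coloured by the denser of the two colours in each regular pair of density at least $d$. A standard counting argument shows that $R$ inherits an approximate Pósa-type condition: its degree sequence $d_1^R \le \ldots \le d_m^R$ satisfies $d_i^R > i + (1/2 + \beta/2)m$ for every $i < m/4$.

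The heart of the argument is then a \emph{reduced-graph lemma} of the following form: any $2$-edge-coloured graph $R$ satisfying the above degree condition admits two vertex-disjoint, distinctly coloured monochromatic \emph{connected matchings} $M_1, M_2$ together covering all but at most $(\beta/3)m$ vertices, where each $M_j$ is required to lie in a single monochromatic component of its colour. To prove this, I would proceed via a dichotomy inspired by Gyárfás's classical theorem on monochromatic connected components in $2$-edge-coloured graphs: either one colour, say red, has a spanning connected subgraph of good connectivity, in which case a Tutte--Berge-style argument combined with the Pósa-type bound on the high-rank vertices of $R$ yields a near-perfect red connected matching, and the small leftover can be matched in blue; or the reduced graph admits a near-bipartite split $V(R) = A \cup B$ with red essentially supported on $R[A]$ and blue on $R[B]$, in which case one finds the desired connected matching within each side using the Pósa bound restricted to that side. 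The low-rank vertices (there are fewer than $m/4$ of them) are the delicate ingredient: each has more than $i + (1/2+\beta)m$ neighbours in $R$, which forces it to sit inside the large monochromatic component and so to be eligible for the matching. Once the connected matchings in $R$ are in hand, the standard regularity-to-cycle conversion (blowing up each matching edge into a long monochromatic path within its super-regular pair, and stitching the paths within a single component into one cycle via short monochromatic connectors) produces the two required cycles, losing only $O(\epsilon n)$ further vertices.

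The main obstacle is the reduced-graph lemma above. Unlike in the Ore-type setting of~\cite{BS16}, one cannot appeal to a uniform lower bound on degree sums and must genuinely use the telescoping structure of the Pósa condition to control the interaction between the few low-rank vertices and the bulk of $R$. Getting the right structural dichotomy on the colour classes of $R$, and verifying in the bipartite-like case that the induced degree sequences on $A$ and on $B$ are strong enough to support a near-perfect monochromatic matching, is precisely the subtle step that distinguishes this proof from its Ore-type predecessor and is foreshadowed by the remark, just before the statement of \cref{thm:main-two}, that no easy link exists between the two conditions.
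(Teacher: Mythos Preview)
Your high-level framework---apply the regularity lemma, show the reduced graph inherits the Pósa condition, prove a structural lemma there, and lift to cycles---matches the paper exactly, and your remark that the result does not reduce to~\cite{BS16} is well taken. The divergence, and the genuine gap, is in the structural lemma itself.

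The paper does \emph{not} look for two vertex-disjoint connected matchings, one per colour, and does not argue via a colour dichotomy. Instead, \cref{lem:structural-two} shows that one can always find a red component $R$ and a blue component $B$ whose union $H = R \cup B$ is spanning and has \emph{no contracting sets} (stable $S$ with $|S| > |N_H(S)|$). The Tutte--Berge formula for $2$-matchings (\cref{lem:ML}) then yields a perfect $2$-matching in $H$, and \cref{lem:CL} converts that single $2$-matching in the union into one cycle per component. Working with a $2$-matching in the union, rather than two disjoint ordinary matchings, is what makes the argument go through: the edges of the $2$-matching may freely alternate between the two colours, so one never has to produce a large matching in one colour alone.

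Your dichotomy is where the proposal breaks. In the first branch, ``red is spanning and connected'' does not yield a near-perfect red matching: the Pósa condition controls $\deg_G$, not red degree, so Tutte--Berge obstructions in the red subgraph are not ruled out. In the second branch, the claimed near-bipartite split with red on $A$ and blue on $B$ is not a consequence of the failure of the first branch; nothing in Gyárfás's component theorem or the Pósa hypothesis forces such a structure. The paper's actual proof of \cref{lem:structural-two} is quite different: first a vertex-chasing argument (\cref{lem:structural-two-intermediate}, using the degree observations of \cref{obs:Posa}) shows some $R \cup B$ is spanning; then, assuming a contracting set $S$ exists in $H$, one locates a high-degree vertex $s_1 \in S \cap (R \setminus B)$, partitions $V(G)$ around the lost blue component $B_1$ of $s_1$, and derives two inequalities that sum to the contradiction $n < n$. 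Reworking your structural step along these lines---union of two components, no contracting sets, $2$-matchings---is the missing idea.
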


For our second main result, we want to allow a third monochromatic cycle in the partition, but work with even smaller degrees in the $2$-edge-coloured host graph. Here, Pokrovskiy~\cite{Pok23} conjectured $2n/3$ as a minimum degree threshold to guarantee a partition into three monochromatic cycles. This has recently been confirmed approximately by Allen, Böttcher, Lang, Skokan, and Stein~\cite{ABL+22}. Recalling the aforementioned results for partitions into two monochromatic cycles, we believe that this minimum degree condition might again be replaceable by its natural Ore-type analogue. We therefore propose the following conjecture, which would be best possible as indicated by the construction of Pokrovskiy~\cite{Pok23}.

\begin{conjecture} \label{conj:main-three}
	There is $n_0$ such that the following holds for all $2$-edge-coloured graphs $G$ on $n \geq n_0$ vertices: If $G$ satisfies $\deg(u) + \deg(v) \geq 4n/3$ for all $uv \notin E(G)$, then there is a partition of $V(G)$ into three monochromatic cycles.
\end{conjecture}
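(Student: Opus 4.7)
The approach is to bootstrap from the approximate version of the statement proved elsewhere in the paper. Specifically, I would treat as a black box the approximate Ore-type theorem for three monochromatic cycles with $o(n)$ uncovered vertices, and remove the error term by an absorption argument in the style of DeBiasio--Nelsen~\cite{DN17} and Letzter~\cite{Let19}. The plan is as follows. First, reserve a small random set $R \subset V(G)$ together with a collection of carefully constructed \emph{absorbing paths} of each of the two colours inside $R$; these are monochromatic paths with the property that any sufficiently small subset of vertices outside $R$ can be spliced into one of them while preserving monochromaticity. Second, apply the approximate partition theorem to $G - R$, whose degree sequence still satisfies an Ore-type condition with only an additive $O(|R|)$ loss, obtaining three monochromatic cycles that cover all but a set $L$ of $o(n)$ vertices. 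Third, extend each cycle by splicing in the matching absorbing path augmented by the leftover vertices of $L$ of the right colour, yielding the desired partition.

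The central technical step is the construction of the absorbing structures under only the Ore-type hypothesis. In the minimum-degree setting every vertex has at least $2n/3$ neighbours and hence many common neighbours with any other vertex in each colour class, so absorbing gadgets can be found greedily. Under the Ore condition, however, low-degree vertices only come with the guarantee that their non-neighbours have large degree, so the construction has to be asymmetric: for each $v \in V(G) \setminus R$ and each colour $c$ I would identify a large pool of \emph{absorbing triples} $(x,y,z) \in R^3$ such that $xyz$ is a path of colour $c$ while $xvz$ is also a path of colour $c$ (or some analogous gadget). A random choice of $R$ of density $\mu$ ensures that each $v$ has $\Omega(\mu^3 n^2)$ such triples in expectation, and a Lovász Local Lemma argument then chains them into three long monochromatic absorbing paths of the correct colours.

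The hardest part will be handling extremal configurations. Pokrovskiy's construction showing tightness of the $4n/3$ threshold is highly structured, and in graphs close to this extremal example the absorbing triples for individual low-degree vertices can become scarce, while the approximate partition is forced into a rigid shape that interacts badly with the leftover set $L$. A stability version of the approximate theorem is therefore essential: either $G$ is structurally close to Pokrovskiy's construction, in which case an ad hoc argument matching the rigid global structure yields an exact partition directly; or $G$ is $\eta$-far from it, in which case the absorber construction and random reservoir go through without issue. Producing such a stability version, and in particular controlling the \emph{colour pattern} of the three cycles in the near-extremal case (since here there is no requirement that the three cycles be distinctly coloured), constitutes the main obstacle. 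The approximate minimum-degree proof in~\cite{ABL+21} provides a template, but adapting it to the lopsided degree distributions permitted by the Ore hypothesis will require genuinely new structural ingredients.
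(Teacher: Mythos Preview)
The statement you are addressing is \emph{Conjecture}~\ref{conj:main-three}, and the paper does not prove it. The paper explicitly leaves it open and establishes only the approximate version, Theorem~\ref{thm:main-three}, which allows an additive $\beta n$ slack in the Ore hypothesis and leaves $\beta n$ vertices uncovered. There is therefore no proof in the paper to compare your attempt against.

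Your proposal is not a proof but a research outline: bootstrap from Theorem~\ref{thm:main-three} via an absorption argument in the spirit of DeBiasio--Nelsen and Letzter. That is a natural strategy, and you correctly identify the two serious obstacles yourself. First, building absorbers under an Ore-type hypothesis is genuinely harder than under a minimum-degree hypothesis, since low-degree vertices need not have many common neighbours with anything, and your sketch of ``asymmetric gadgets'' plus a Local Lemma chaining is not yet a construction. Second, and more importantly, you would need a stability version of Theorem~\ref{thm:main-three} together with a direct exact argument in the near-extremal case; neither exists in the paper, and you concede this ``will require genuinely new structural ingredients.'' Until those two pieces are actually carried out, what you have written is a plan of attack on an open problem, not a proof.
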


Apart from the case of two cycles addressed by Barát and Sárközy~\cite{BS16}, such a generalisation from minimum degree to Ore-type conditions has also been achieved by Barát, Gyárfás, Lehel, and Sárközy~\cite{BGLS16} for finding large monochromatic paths in $2$-edge-coloured graphs. In support of \cref{conj:main-three}, our second main result confirms it approximately.

\begin{theorem}\label{thm:main-three}
	For every $\beta > 0$, there is $n_0(\beta)$ such that the following holds for all $2$-edge-coloured graphs $G$ on $n \geq n_0(\beta)$ vertices: If $G$ satisfies $\deg(u) + \deg(v) \geq (4/3 + \beta)n$ for all $uv \notin E(G)$, then there are three pairwise vertex-disjoint monochromatic cycles in $G$, which together cover at least $(1 - \beta)n$ vertices.
\end{theorem}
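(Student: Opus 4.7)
The plan is to follow the regularity-based strategy used in \cite{BS16} and \cite{ABL+21}, adapted to an Ore-type rather than a minimum-degree hypothesis. First, I would apply the $2$-edge-coloured Szemerédi regularity lemma to $G$ and obtain a reduced graph $R$ on $t$ clusters of size $m \approx n/t$, whose edges are coloured by the dominant colour of each $\varepsilon$-regular pair of density at least $d$. A standard averaging argument then transfers the Ore condition to $R$: after discarding an $o(t)$-sized set of bad clusters, every non-adjacent pair $X,Y \in V(R)$ satisfies $\deg_R(X)+\deg_R(Y) \geq (4/3+\beta/2)t$.

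The key structural step is to show that this reduced graph $R$ can be almost-covered by three monochromatic connected matchings. Setting $L := \{X \in V(R) : \deg_R(X) < (2/3+\beta/4)t\}$, the Ore inequality forces $L$ to be a clique in $R$, mirroring the analogous fact in $G$ that $\{v : \deg_G(v) < (2/3+\beta/2)n\}$ is a clique. I would split into two cases depending on $|L|$. If $|L| \leq \beta t/10$, then $H := V(R)\setminus L$ induces a reduced graph of effective minimum degree at least $(2/3+\beta/5)|H|$, and a reduced-graph version of the Allen--Böttcher--Lang--Skokan--Stein theorem \cite{ABL+21} supplies three monochromatic connected matchings covering all but $o(t)$ clusters of $H$; the few leftover clusters in $L$ are then absorbed by extending one of the matchings through the clique $L$.

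If instead $|L|$ is larger, then $L$ itself forms a $2$-edge-coloured clique of linear size, and the Gerencsér--Gyárfás partition \cite{GG67} decomposes it into two monochromatic paths of distinct colours, giving two monochromatic connected matchings covering $L$. The third monochromatic connected matching is then built on $H$ using the strong degree bounds $\deg_R(X) \geq (2/3+\beta/4)t$ available there, combined with an Erdős--Gallai-type long-path argument to ensure connectivity in a single colour. Finally, each monochromatic connected matching in $R$ is converted into a long monochromatic cycle in $G$ via the standard blow-up routine: edges of the matching serve as scaffolds, consecutive matching edges are stitched together along connecting paths within their component, and nearly all vertices of the corresponding clusters are swept into a Hamilton-type cycle in the blow-up, producing the three desired cycles covering at least $(1-\beta)n$ vertices of $G$.

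The principal obstacle I anticipate lies in the large-$L$ case of the structural step. The two colours forced on $L$ by the Gerencsér--Gyárfás decomposition constrain which colour may be used for the third matching on $H$, and this constraint may collide with the natural colour majority arising from the Ore condition across $L$ and $H$. Carefully balancing the colour bookkeeping — so that each of the three monochromatic connected matchings lies inside a large monochromatic connected component of $R$ of the right colour, while leaving at most $o(t)$ clusters uncovered — is where I expect the bulk of the case analysis to live, and where the Ore-type hypothesis genuinely differs from the minimum-degree setting of \cite{ABL+21}.
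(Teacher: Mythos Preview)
Your overall regularity framework (steps 1, 2, and 5) matches the paper, but the structural step in the reduced graph is where your plan diverges and where it breaks.

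\textbf{The transfer of the Ore condition.} You propose to discard $o(t)$ ``bad clusters'' so that the remaining reduced graph satisfies the Ore inequality. This does not work as stated: the failure of the Ore bound in $R$ is a property of \emph{pairs} of clusters, not of individual clusters, so there need not be a small set whose removal repairs all violations. The paper handles this by introducing an auxiliary exception graph $X$ on $[t]$ with $\Delta(X)<\delta t$, recording those non-adjacent pairs $(j,k)$ for which the Ore bound fails, and then proving the structural lemma for the pair $(R,X)$ under the hypothesis that $R\cup X$ is $(t,\gamma)$-Ore. All subsequent arguments must tolerate these $\delta t$ exceptional neighbours per cluster.

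\textbf{The structural step itself.} Your low-degree/high-degree split on $L=\{X:\deg_R(X)<(2/3+\beta/4)t\}$ does not reduce the problem. In the small-$|L|$ case you can simply discard $L$ (no absorption is needed), so that case is fine. The genuine gap is the large-$|L|$ case. If, say, $|L|=t/2$, then $|H|=t/2$ and each cluster of $H$ has at least $(2/3+\beta/4)t-|L|\approx t/6$ neighbours in $H$, i.e.\ relative minimum degree about $|H|/3$. A single monochromatic connected matching on such an $H$ need not cover more than a constant fraction of $H$; an Erd\H{o}s--Gallai long-path argument gives a path, not a near-spanning structure. So after spending two of your three cycles on the Gerencs\'er--Gy\'arf\'as paths inside $L$, the third cannot in general cover $H$ up to $o(t)$. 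The obstacle you flag (colour bookkeeping between $L$ and $H$) is secondary; the primary obstruction is coverage.

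\textbf{What the paper does instead.} The paper abandons the degree-based split entirely. It first shows that in any $(R,X)$ satisfying the approximate Ore condition, two monochromatic components already cover all but $O(\delta t)$ clusters. It then case-splits on whether these two components are one component, two of different colours, or two of the same colour (``plain/mixed/split''), and in each case finds three monochromatic components whose union $H$ has no stable set $S$ with $|S|-|N_H(S)|>\eta t$. The Tutte--Berge formula for $2$-matchings then yields a $2$-matching covering $(1-2\eta)t$ clusters of $H$, which is what the cycle-building lemma needs. The hard work---and the place where the Ore hypothesis genuinely differs from the minimum-degree setting of \cite{ABL+21}---is the ``split'' case, handled by a sequence of lemmas locating the hypothetical contracting sets in diagonal pieces $R_i\cap B_j$ and then deriving a contradiction with the Ore bound.
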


\subsection{Open problems}
Having derived a Pósa-type analogue of the Ore-type condition for two cycles with \cref{thm:main-two}, it is only natural to ask whether a similar analogue also exists in the case of three cycles, so for \cref{thm:main-three}. Here, the task of finding the optimal Pósa-type condition can be formulated as follows.

\begin{problem}\label{prob:open}
	Determine the minimum $x, y \in [0, 1]$ that satisfy: 
	
	For every $\beta > 0$, there is $n_0(\beta)$ such that the following holds for all $2$-edge-coloured graphs $G$ on $n \geq n_0(\beta)$ vertices: If the degree sequence $d_1 \leq \ldots \leq d_n$ of $G$ satisfies $d_i > i + (x + \beta)n$ for all $1 \leq i < yn$, then there are three pairwise vertex-disjoint monochromatic cycles in $G$, which together cover at least $(1 - \beta)n$ vertices.
\end{problem}

As any graph $G$ with minimum degree $\delta(G) \geq (x + y + \beta)n$ automatically satisfies such a Pósa-type condition, the construction of Pokrovskiy~\cite{Pok23} for the sharpness of the minimum degree threshold immediately implies that $x + y \geq 2/3$ must hold. In fact, any solution of \cref{prob:open} with $x + y = 2/3$ would approximately generalise the result in~\cite{ABL+22}.

Among all such solutions, the stronger statements arise from decreasing $x$ and increasing $y$. Since any graph satisfying the Ore-type condition of \cref{thm:main-three} also satisfies the Pósa-condition above with $(x, y) = (1/6, 1/2)$\footnote{Again, this can easily be seen from choosing $x = 1/6$ in \cref{prop:Ore implies Posa (general)}.}, the lowest achievable $x$ is $1/6$, which would fully generalise \cref{thm:main-three}. As we will discuss in \cref{sec:constructions}, however, such a full generalisation is not possible for two cycles, so it seems unlikely that it would hold for three cycles. Considering \cref{thm:main-two}, we suggest $(x,y) = (1/2,1/6)$ as a sensible conjecture for further research.

\subsection{Methodology}\label{subsec:method}
We briefly sketch the proof idea for our main results. As it is the same for \cref{thm:main-two} and \cref{thm:main-three}, we focus on the former. The main tool is a colour version of Szemerédi's regularity lemma. Starting with a host graph $G$ satisfying the Pósa-type condition, we first apply the regularity lemma (\cref{lem:RL}) to partition $V(G)$ into a bounded number of clusters. We find that up to a negligible loss, the reduced graph $R$ with these clusters as vertices inherits the Pósa-type condition of $G$. Using our structural lemma for two cycles (\cref{lem:structural-two}), we identify two distinctly coloured monochromatic components of $R$ that are suitable for constructing the desired cycles. More precisely, the union $H$ of these components does not contain a \emph{contracting} set (for a formal definition, see \cref{subsec:matching}). By a well-known analogue of Tutte's theorem (\cref{lem:ML}), this is equivalent to $H$ having a perfect $2$-matching. Leveraging regularity, this $2$-matching can be used to lift each monochromatic component in $H$ to one monochromatic cycle in $G$. The technical details are encapsulated in \cref{lem:CL} and guarantee that the cycles are vertex-disjoint and approximately cover the same fraction of vertices as the $2$-matching, as desired.

The proof of \cref{thm:main-three} is similar and only requires one minor adjustment. Here, we cannot completely exclude the occurrence of contracting sets, but only limit what we call their \emph{contraction}. However, it turns out that this does not invalidate the approach above, although it complicates the proof of the respective structural lemma (\cref{lem:structural-three}). Nevertheless, we are still able to follow the line of argumentation from the proof of the corresponding structural lemma in~\cite{ABL+22} for the minimum degree case.

\subsection{Organisation of the paper}\label{subsec:orga}
The rest of this paper is organised as follows. The next section introduces some basic notation and definitions. In \cref{sec:constructions}, we provide the aforementioned constructions showing that \cref{conj:main-two,conj:main-three} are essentially tight. Afterwards, \cref{sec:tools} presents the necessary tools for embedding cycles, which will allow us to prove \cref{thm:main-two,thm:main-three} in \cref{sec:proof-main}. Each proof relies on a structural lemma that is used as a black box. Finally, \cref{sec:proof-structural-two,sec:proof-structural-three} are dedicated to proving these structural lemmas, thereby completing the proofs of our two main results.

\section{Notation}\label{sec:nota}
We write $[k] = \{ 1, \ldots, k \}$. For a graph $G$, a function $c \colon E(G) \to [2]$ is called a \emph{$2$-edge-colouring} of $G$. A \emph{$2$-edge-coloured graph} $(G, c)$ is a pair of a graph $G$ and a $2$-edge-colouring $c$ of $G$ although we generally suppress the latter in the notation. For the sake of simplicity, we denote the subgraphs of $G$ retaining only the edges of colour class $i$ as $G_i$, but routinely refer to these colour classes as \emph{red} and \emph{blue}. Any connected component of such a $G_i$ is called a \emph{(red/blue) monochromatic component} of $G$. In particular, isolated vertices of $G_i$ form monochromatic components with only one vertex and no edges.

For a subset $U \subset V(G)$, we let the \emph{neighbourhood} $N_G(v, U)$ be the set of all vertices in $U$ that are adjacent to $v$ by an edge of $G$ and omit $U$ if $U = V(G)$. This extends to neighbourhoods of subsets $V \subset V(G)$ by $N_G(V) := (\bigcup_{v \in V} N_G(v)) \setminus V$. Similarly, we use $\deg_G(v)$ to refer to the \emph{degree} of $v \in V(G)$ and define $\deg_G(v, U) := \abs {N_G(v, U)}$. We denote by $\delta(G)$ and $\Delta(G)$ the \emph{minimum degree} and the \emph{maximum degree} of $G$, respectively. If $G$ is a graph on $n$ vertices $v_1, \ldots, v_n$, ordered such that $\deg_G(v_1) \leq \ldots \leq \deg_G(v_n)$, then this non-decreasing sequence is called the \emph{degree sequence} of $G$.

The \emph{union} of two graphs $H_1, H_2$ has vertex set $V(H_1) \cup V(H_2)$ and edge set $E(H_1) \cup E(H_2)$. For a vertex subset $U \subset V(G)$, the complement $\overline U := V(G) \setminus U$ is always understood relative to the largest graph $G$ in the context. We can then remove the vertices of $U$ from $G$ by considering $G \setminus U$, the induced subgraph on $\overline U$. In conjunction with set-theoretical operands such as cardinality, subset, complement, union, intersection or set difference, we also use the symbol of a graph to refer to its vertex set. For example, if $H_1, H_2$ are subgraphs of some common graph, then $H_1 \cap H_2$ means $V(H_1) \cap V(H_2)$. In particular, this notation allows us to denote the number of vertices of a graph $G$ as $\abs G$.

In constant hierarchies, we write $x \ll y$ if for all $y \in (0, 1]$, there is some $x_0 \in (0, 1)$ such that the subsequent statement holds for all $x \in (0, x_0]$. Hierarchies with more than two constants are defined similarly and read from right to left. Furthermore, we will assume all constants to be positive real numbers and $x$ to be a natural number if $1/x$ appears in such a hierarchy.

\section{Constructions}\label{sec:constructions}
Labeling conditions as \emph{Ore-} or \emph{Pósa-type} conditions stems from the well-known Hamiltonicity conditions given by Ore \cite{Ore60} and Pósa \cite{Pos62}. In fact, the latter generalises the former, which also carries over to stronger versions of both conditions. For example, the Ore-type condition conjectured by Barát and Sárközy~\cite{BS16} can be seen a special case of the following Pósa-type condition with $x = 1/4$.

\begin{proposition}\label{prop:Ore implies Posa (general)}
	Let $x \in [0, 1/2)$ and $G$ be a non-complete graph on $n$ vertices such that $\deg_G(u) + \deg_G(v) \geq (1 + 2x)n$ holds for all $uv \not \in E(G)$. Then the degree sequence $d_1 \leq \ldots \leq d_n$ of $G$ satisfies $d_j > j + xn$ for all $1 \leq j < n/2$.
\end{proposition}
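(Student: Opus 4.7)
The claim is the standard ``Ore implies P\'osa'' reduction generalised by a parameter $x$. The plan is a proof by contradiction via a careful degree count on a clique of low-degree vertices and its complement set of high-degree vertices.

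Suppose for a contradiction that $d_j \le j + xn$ for some $1 \le j < n/2$, and let
\[
A \;:=\; \{v \in V(G) : \deg_G(v) \le j + xn\},
\]
which contains $v_1, \ldots, v_j$ and so satisfies $|A| \ge j$. The first observation would be that $A$ is a clique: any two non-adjacent $u, v \in A$ would give $\deg(u) + \deg(v) \le 2(j + xn) < (1+2x)n$ (using $2j < n$), contradicting the Ore-type hypothesis. Consequently each $v \in A$ has at least $|A|-1$ neighbours inside $A$, and the inequality $|A| - 1 \le \deg(v) \le j + xn$ yields the complementary bound $|A| \le j + xn + 1$.

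Next I would examine non-neighbours. For any $v \in A$ and any non-neighbour $u$ of $v$, the Ore-type hypothesis gives $\deg(u) \ge (1+2x)n - (j+xn) = n + xn - j$. Writing $B := \{u \in V(G) : \deg(u) \ge n + xn - j\}$, the sets $A$ and $B$ are disjoint (as $j + xn < n + xn - j$ by $j < n/2$) and every non-neighbour of every vertex of $A$ lies in $B$. In particular each $v \in A$ is adjacent to everything outside $B \cup \{v\}$, forcing $\deg(v) \ge n - 1 - |B|$ and hence $|B| \ge n - 1 - j - xn$.

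The crux is a double-count of $e(A,B)$. Setting $C := V(G) \setminus (A \cup B)$, the lower bound $\sum_{u \in B} \deg(u) \ge |B|(n+xn-j)$ combined with $\sum_{u \in B} \deg(u) \le |B|(|B|-1) + e(A,B) + |B||C|$ and $|A|+|B|+|C|=n$ gives
\[
e(A,B) \;\ge\; |B|\bigl(|A| - j + xn + 1\bigr).
\]
Because each $v \in A$ is adjacent to all of $A \setminus \{v\}$ and of $C$, its neighbours in $B$ number at most $\deg(v) - (|A|-1) - |C| \le j + xn - |A| + 1 - |C|$, so
\[
e(A,B) \;\le\; |A|\bigl(j + xn - |A| + 1 - |C|\bigr).
\]
Combining the two bounds and cancelling with $|A|+|B|+|C|=n$ collapses to
\[
|A|\bigl(n - 1 - j - xn\bigr) \;\le\; |B|\bigl(j - xn - 1\bigr).
\]
Using $|A| \ge j$ and $|B| \le n - |A| \le n - j$, expansion reduces this to $(xn+1)(n - 2j) \le 0$, which is impossible since $xn + 1 > 0$ and $n - 2j > 0$.

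The main obstacle is keeping the algebra clean in the final step: the $|C|$-contributions in both bounds on $e(A,B)$ must be tracked so that they cancel via $|A|+|B|+|C|=n$, leaving a one-line contradiction. The only delicate point is the sign of $j - xn - 1$ and $n - 1 - j - xn$; if either is non-positive then $j + xn$ is either so small that every non-neighbour of a vertex in $A$ would be forced by the Ore bound to have degree $\ge n - 1$ (and hence cannot be a non-neighbour), or so close to $n - 1$ that the target inequality $d_j > j + xn$ is vacuous, so these regimes need only a short separate remark.
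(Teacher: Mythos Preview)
Your proof is correct and follows the same broad strategy as the paper---show that the low-degree vertices form a clique and then count edges to a contradiction---but the execution is more elaborate. You work with the full set $A$ of vertices of degree at most $j+xn$, introduce its ``dual'' high-degree set $B$, and run a two-sided double count of $e(A,B)$ that collapses algebraically to $(xn+1)(n-2j)\le 0$. The paper takes a shorter route: it fixes $U=\{v_1,\dots,v_j\}$ rather than all low-degree vertices, observes that $U$ is a clique, and bounds the number of $U$--$\overline U$ edges by $(xn+1)j$. A single averaging step then produces some $v\in\overline U$ with $\deg_G(v,U)<xn+1$; since not every $U$--$\overline U$ edge can be present, there is a non-edge $uv$ with $u\in U$, and summing the four partial degrees gives $\deg_G(u)+\deg_G(v)<(1+2x)n$ directly. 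This avoids both the global double count and the sign-case analysis; your approach is sound but buys no extra generality here.

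One small remark on your final paragraph: when $j-xn-1\le 0$ while $n-1-j-xn>0$, your derived inequality $|A|(n-1-j-xn)\le |B|(j-xn-1)$ already has positive left side and non-positive right side, so the contradiction is immediate and no separate argument about ``non-neighbours of degree $\ge n-1$'' is needed. The remaining regime $n-1-j-xn\le 0$ is not ``vacuous'' but rather one where the conclusion $d_j>j+xn$ is impossible; this is a defect of the proposition's statement for very small $n$ relative to $1/(1-2x)$ (the paper's proof has the same implicit gap when it asserts ``this is not the case by assumption''), not of your argument.
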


\begin{proof}
	Assume otherwise, so $d_j \leq j + xn$ for some $1 \leq j < n/2$. Let $v_1, \ldots, v_n$ be an enumeration of $V(G)$ in order of non-decreasing degree and define $U := \{ v_1, \ldots, v_j \}$. As $\deg_G(u) + \deg_G(u') \leq 2 d_j \leq 2j + 2xn < (1 + 2x)n$ for all $u, u' \in U$, the induced subgraph $G[U]$ must be a clique by the Ore-type condition. Therefore, each $u \in U$ satisfies $\deg_G(u, U) = \abs U - 1 = j - 1$ and
	\[
	\deg_G(u, \overline U)
	= \deg_G(u) - \deg_G(u, U)
	\leq d_j - (j - 1)
	\leq xn + 1
	\,.
	\]
	So there are at most $(xn + 1)\abs U$ edges between $U$ and $\overline U$. Let $v \in \overline U$ be the vertex incident to the least number of these edges. Then as $\abs {\overline U} > \abs U$, this vertex $v$ must satisfy $\deg_G(v, U) \leq (xn + 1)\abs U /\abs {\overline U} < xn + 1$. Now if all edges from $U$ to $\overline U$ existed, $\delta(G) = d_1 = n - 1$ would follow and imply that $G$ is complete. As this is not the case by assumption, we can pick some $u \in U \setminus N_G(v)$. Thus, the obvious observation $\deg_G(v, \overline U) \leq \abs {\overline U} - 1 = n - j - 1$ implies that
	\begin{align*}
	\deg_G(u) + \deg_G(v)
	&\leq d_j + \deg_G(v, U) + \deg_G(v, \overline U) \\
	&< j + xn + (xn + 1) + (n - j - 1)
	= (1 + 2x)n
	\,,
	\end{align*}
	which contradicts the Ore-type condition.
\end{proof}

However, as the following construction shows, this Pósa-type condition is too weak to guarantee that the graph can be covered by two vertex-disjoint monochromatic cycles. Indeed, we can show that the stronger Pósa-type condition of \cref{conj:main-two} is essentially tight. This means that up to a constant number of vertices, every inequality required is necessary in order to ensure the existence of a red and a vertex-disjoint blue cycle covering the whole graph. In fact, this is still true even if cycles of the same colour are allowed.

\begin{figure}
	\begin{tikzpicture}[scale = 0.7]
	\definecolor{RED}{rgb}{1,0.5,0.5}
	\definecolor{BLUE}{rgb}{0.6,0.6,1}
	\definecolor{ARBIT}{rgb}{0.9,0.6,0.8}
	\node[circle, minimum size = 0.5cm] (U) at (0, 3) {};
	\node[circle, minimum size = 1cm] (A1) at (-3, 0) {};
	\node[circle, minimum size = 1cm] (A2) at (3, 0) {};
	\node[circle, minimum size = 1.5cm] (B) at (0, -3) {};
	\fill[BLUE] (U.south) -- (U.north west) -- (A1.north west) -- (A1.south east) -- cycle;
	\fill[RED] (U.north east) -- (U.south) -- (A2.south west) -- (A2.north east) -- cycle;
	\fill[RED] (A1.north east) -- (A1.south west) -- (B.south west) -- (B.north east) -- cycle;
	\fill[BLUE] (A2.south east) -- (A2.north west) -- (B.north west) -- (B.south east) -- cycle;
	\draw[fill=ARBIT] (U) circle (0.5cm) node {$U$};
	\draw[fill=RED] (A1) circle (1cm) node {$A_1$};
	\draw[fill=BLUE] (A2) circle (1cm) node {$A_2$};
	\draw[fill=ARBIT] (B) circle (1.5cm) node {$B$};
	\end{tikzpicture}
	\caption{The $2$-edge-coloured graph from \cref{construction:conj-two}.} \label{fig:constructions}
\end{figure}

\begin{construction}\label{construction:conj-two}
	Let $k < m$ and $G_{k, m}$ be a $2$-edge-coloured graph on $4m$ vertices as follows. The vertex set of $G_{k, m}$ consists of one cluster $U$ of $k$ vertices, two clusters $A_1$ and $A_2$ of $m$ vertices each, and one cluster $B$ of $2m - k$ vertices. The only edges missing from $G$ are edges from $U$ to $B$ and from $A_1$ to $A_2$. The edges inside $A_1$, from $A_1$ to $B$ and from $U$ to $A_2$ are red. Similarly, the edges inside $A_2$, from $A_2$ to $B$ and from $U$ to $A_1$ are blue. The edges inside $U$ and $B$ have arbitrary colours (see \cref{fig:constructions}).
\end{construction}

\begin{proposition}\label{prop:conj-two}
	For $k < m$ and $n = 4m$, the $2$-edge-coloured graph $G_{k, m}$ satisfies both of the following:
	\begin{enumerate}
		\item The degree sequence $d_1 \leq \ldots \leq d_n$ of $G_{k, m}$ satisfies $d_j > j + n/2 - 1$ for all $1 \leq j < n/4$ except $j = k$.
		\item The vertices of $G_{k, m}$ cannot be covered by two vertex-disjoint monochromatic cycles.
	\end{enumerate}
\end{proposition}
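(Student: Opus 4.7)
I will verify the two claims separately by direct calculation and case analysis on the structure of the construction.

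For part (i), I would first compute the four vertex-class degrees explicitly from the adjacency description: each $u \in U$ sees $U \setminus \{u\}$, $A_1$ and $A_2$, giving $\deg(u) = 2m + k - 1$; each vertex in $A_1 \cup A_2$ sees its own class minus itself, the other of $U$ or $B$ it is joined to, and $B$ (for $A_1$) or $U$ (for $A_2$), giving degree $3m - 1$; each $b \in B$ sees $A_1$, $A_2$ and $B \setminus \{b\}$, giving $\deg(b) = 4m - k - 1$. Because $k < m$, these three values are strictly increasing, so the sorted degree sequence is $d_1 = \ldots = d_k = 2m + k - 1$, $d_{k+1} = \ldots = d_{k+2m} = 3m - 1$, and $d_{k+2m+1} = \ldots = d_{4m} = 4m - k - 1$. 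Using $n/4 = m$ and $n/2 = 2m$, the condition $d_j > j + 2m - 1$ becomes $d_j \geq j + 2m$. I would then simply check this in each subrange: for $j < k$ one has $d_j = 2m + k - 1 \geq j + 2m$ iff $k > j$; for $k < j < m$ one has $d_j = 3m - 1 \geq j + 2m$ iff $j < m$; and precisely at $j = k$ the inequality collapses to $0 > 0$, which is the single excepted index.

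For part (ii), the key observation is that the two missing edge classes ($U$ to $B$ and $A_1$ to $A_2$) force a clean component structure on each colour class. Ignoring the arbitrarily coloured internal edges inside $U$ and inside $B$ (which cannot bridge components, since $U$ and $B$ are non-adjacent), the red monochromatic components of $G_{k,m}$ are exactly $U \cup A_2$ and $A_1 \cup B$, and the blue ones are exactly $U \cup A_1$ and $A_2 \cup B$. Any monochromatic cycle must sit inside a single monochromatic component.

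Suppose for contradiction that two vertex-disjoint monochromatic cycles $C_1, C_2$ cover $V(G_{k,m})$. I would split into two cases by colour. If $C_1$ and $C_2$ are differently coloured, then after choosing one red and one blue component for them to lie in, a straightforward four-way case check shows their combined vertex set always misses one entire class among $U, A_1, A_2, B$, contradicting that they cover $V(G_{k,m})$. If both cycles have the same colour (say red), then to cover $V(G_{k,m})$ one cycle must span $U \cup A_2$ while the other spans $A_1 \cup B$; but the red graph on $U \cup A_2$ has no internal edges in $A_2$, so each of the $m$ vertices of $A_2$ contributes two incidences to $U$ in any Hamilton cycle, forcing $2m \leq 2|U| = 2k$, contradicting $k < m$. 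The same argument rules out the symmetric all-blue case via $U \cup A_1$. Hence no covering by two vertex-disjoint monochromatic cycles exists, completing (ii).

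Neither step involves a genuine obstacle: part (i) is an arithmetic verification, and part (ii) reduces to a short component/parity argument once the coloured components are identified. The only point meriting care is to confirm that the arbitrary colouring inside $U$ and inside $B$ cannot create extra monochromatic components bridging $U$ with $B$, but this is immediate from the absence of $U$--$B$ edges.
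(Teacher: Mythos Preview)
Your proof is correct and rests on the same two ingredients as the paper's: the explicit degree computation for part~(1), and for part~(2) the observation that the red components are $U\cup A_2$ and $A_1\cup B$ (blue: $U\cup A_1$ and $A_2\cup B$), together with the counting argument that a cycle in $U\cup A_i$ cannot contain all of $A_i$ since $k<m$. The paper organises part~(2) slightly more compactly---rather than splitting into same-colour and different-colour cases, it notes directly that any monochromatic cycle meeting $U$ lies in some $U\cup A_i$ and fails to cover that $A_i$, forcing the second cycle to meet $A_1$, $A_2$ and $B$ simultaneously, which no monochromatic component permits---but this is just a reshuffling of the same case analysis.
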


\begin{proof}
	The vertices with the smallest degree in $G_{k, m}$ are those in $U$. So the first $k$ terms in the degree sequence of $G_{k, m}$ are $d_1 = \ldots = d_k = k + 2m - 1$, and (1) holds for all $1 \leq j < k$, but not for $j = k$. As every vertex $v \in \overline U$ satisfies $\deg_{G_{k, m}}(v) \geq 3m - 1 > j + 2m - 1$ for all $j < m$, (1) also holds for $k < j < m$.
	
	It is easy to see that any monochromatic cycle intersecting $U$ can only intersect either $A_1$ or $A_2$, but cannot cover this $A_i$ completely. So as no monochromatic cycle can intersect all three of $A_1$, $A_2$ and $B$, the graph $G_{k, m}$ satisfies (2).
\end{proof}

Since the Pósa-type condition from \cref{conj:main-two} is strictly stronger than the one obtained from \cref{prop:Ore implies Posa (general)}, the question whether the former still generalises the Ore-type condition from Barát and Sárközy~\cite{BS16} arises naturally. The following proposition answers this in the negative.

\begin{proposition}
	Let $0 < \beta < 1/6$. Then there is a graph $G$ on $n = 4m$ vertices such that:
	\begin{enumerate}
		\item The degree sequence $d_1 \leq \ldots \leq d_n$ of $G$ violates $d_j > j + n/2$ for $j = m-1 < n/4$.
		\item $\deg_G(u) + \deg_G(v) \geq (3/2 + \beta)n$ holds for all $uv \not \in E(G)$.
	\end{enumerate}
\end{proposition}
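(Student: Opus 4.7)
The plan is to construct $G$ as the union of two large cliques $L$ and $H$ joined by a carefully balanced bipartite graph. Concretely, I would choose $m$ large in terms of $\beta$, set $V(G) = L \cup H$ with $|L| = m-1$ and $|H| = 3m+1$, and make each of $L$ and $H$ complete. Between $L$ and $H$ I would place a bipartite graph $B$ in which every $v \in L$ has exactly $m$ non-neighbours in $H$ and every $w \in H$ has at most $\lceil (m-1)m/(3m+1) \rceil$ non-neighbours in $L$. Such a $B$ exists by a standard greedy construction (or Gale--Ryser): the prescribed non-degrees on either side sum to $(m-1)m$, so one can assign each $v \in L$ an $m$-subset $M_v \subseteq H$ of non-neighbours while keeping the number of sets $M_v$ containing any given $w \in H$ within one of the average $(m-1)m/(3m+1)$. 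The edge set of $G$ then consists of all $L$-$L$ and $H$-$H$ edges together with the edges of $B$.

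Verifying (1) will be immediate: every $v \in L$ has degree $(m-2) + (2m+1) = 3m-1$, while every $w \in H$ has strictly more, so the smallest $m-1$ entries of the degree sequence all equal $3m-1 = (m-1) + n/2$. This is precisely the strict inequality $d_j > j + n/2$ failing at $j = m-1$.

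For (2), the only non-adjacent pairs lie in $L \times H$. For such $(v,w)$ I would compute $\deg_G(v) = 3m-1$ and
\[
\deg_G(w) \geq 3m + (m-1) - \left\lceil \frac{(m-1)m}{3m+1} \right\rceil \geq \frac{11m}{3} - O(1),
\]
so the degree sum is at least $20m/3 - O(1) = (5/3)n - O(1) = (3/2 + 1/6)n - O(1)$. For any fixed $\beta < 1/6$, choosing $m$ large enough in terms of $\beta$ absorbs the additive error and pushes the sum above $(3/2 + \beta)n$, as required.

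The conceptual content is the identification of the degree ratio $5/3$ as the optimum achievable by this two-clique configuration, which is exactly what pins the threshold $\beta < 1/6$ in the statement; the only real technical nuisance will be producing the balanced bipartite graph $B$, but this is routine once one observes that the average $H$-side non-degree is $(m-1)m/(3m+1) < m/3$, leaving ample slack to round.
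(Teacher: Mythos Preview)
Your construction is correct and is essentially identical to the paper's: two cliques of sizes $m-1$ and $3m+1$, with each vertex of the small clique having exactly $2m+1$ neighbours (equivalently, $m$ non-neighbours) in the large clique, spread as evenly as possible. The degree computations and the limiting ratio $5/3$ match the paper's argument line for line; the only cosmetic difference is that you phrase the bipartite part via non-degrees and invoke Gale--Ryser, whereas the paper simply says ``evenly distributed'' and ignores rounding.
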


\begin{proof}
	Consider a graph $G$ consisting of one clique of $m-1$ vertices $U$ and another clique of $3m+1$ vertices $\overline U$. Moreover, every $u \in U$ has $\deg_G(u, \overline U) = 2m + 1$ with the endpoints of these edges evenly distributed among $\overline U$. Note that this implies that the $m-1$ vertices in $U$ have lower degree than the vertices in $\overline U$. More precisely, we have $d_1 = \ldots = d_{m-1} = 3m - 1$, which violates the Pósa-type condition at $j = m-1$ and thus confirms~(1).
	
	Now pick $u \in U$ and $v \in \overline U$ with $uv \not \in E(G)$. Then $\deg_G(u, U) + \deg_G(v, \overline U) = n - 2 = 4m - 2$ and, ignoring rounding operations, 
	\[
	\deg_G(u, \overline U) + \deg_G(v, U)
	= 2m + 1 + \frac {m-1}{3m+1} \cdot (2m + 1)
	= \frac {2m+1}{3m+1} \cdot 4m
	\,.
	\]
	Dividing $\deg_G(u) + \deg_G(v)$ by $n = 4m$ now yields $(4m - 2)/(4m) + (2m + 1)/(3m + 1)$, which tends to $5/3$ as $n$ and thus $m$ goes to infinity. So for sufficiently large $n$, the term $\deg_G(u) + \deg_G(v)$ approaches $5n/3$ and in particular, surpasses $(3/2 + \beta)n$ for every $0 < \beta < 1/6$.
\end{proof}

\section{Tools for embedding cycles}\label{sec:tools}

\subsection{Matchings}\label{subsec:matching}
A \emph{$2$-matching} in a graph $G$ is a function $w \colon E(G) \to \{ 0, 1, 2 \}$ with $\sum_{u \in N_G(v)} w(uv) \leq 2$ for all $v \in V(G)$. It is said to \emph{cover} $\abs w := \sum_{e \in E(G)} w(e)$ vertices of $G$. We call such a $2$-matching $w$ in $G$ \emph{maximum} if $\abs w \geq \abs {w'}$ for every $2$-matching $w'$ in $G$. 

A vertex subset $S \subset V(G)$ is called \emph{stable} in $G$ if there are no edges between the vertices of $S$ in $G$. We define its \emph{contraction} in $G$ to be $c_G(S) := \abs S - \abs {N_G(S)}$. For any $c \geq 0$, a set $S$ is called \emph{$c$-contracting} in $G$ if it is stable in $G$ and satisfies $c_G(S) > c$. Instead of $0$-contracting, we simply say \emph{contracting}. The following analogue of the Tutte-Berge formula establishes a connection between the maximum $2$-matching and the maximum contraction among all stable sets in a graph~\cite[Theorem~30.1]{Sch03}.

\begin{lemma}[Tutte-Berge formula for $2$-matchings]\label{lem:ML}
	The maximum $2$-matching in a graph $G$ covers $\abs G - \max \{ c_G(S) \mid S \subset V(G) \text{ stable} \}$ vertices.
\end{lemma}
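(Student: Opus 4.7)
The plan is to prove the two inequalities separately. The easy direction is a double-counting argument: given any 2-matching $w$ and stable set $S \subseteq V(G)$, I would bound $2|w| = \sum_{v \in V(G)} \deg_w(v)$ by splitting at $S$. Vertices outside $S$ contribute at most $2(|G|-|S|)$, and since every edge with an endpoint in $S$ has its other endpoint in $N_G(S)$ by stability, one has $\sum_{v \in S} \deg_w(v) = \sum_{e \cap S \neq \emptyset} w(e) \leq \sum_{v \in N_G(S)} \deg_w(v) \leq 2|N_G(S)|$. Combining these gives $|w| \leq |G| - c_G(S)$, which is the desired upper bound on the maximum 2-matching size.

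For the matching direction, I would reduce 2-matchings in $G$ to ordinary matchings in the bipartite double cover $H$ of $G$, which has two copies $A$ and $B$ of $V(G)$ and an edge $u_A v_B$ in $H$ exactly when $uv \in E(G)$. Setting $w(uv) := |\{u_A v_B, v_A u_B\} \cap M|$ turns any matching $M$ in $H$ into a 2-matching of $G$ with $|w| = |M|$, the degree bound at $v$ being enforced by the matching conditions at $v_A$ and $v_B$. Conversely, each 2-matching $w$ can be lifted to a matching of $H$ of the same size: include both $u_A v_B$ and $v_A u_B$ for every weight-2 edge, and use that the weight-1 subgraph has maximum degree at most $2$, hence is a vertex-disjoint union of paths and cycles. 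Orienting each such component monotonically along a path or cyclically around a cycle gives in-degree and out-degree at most one at every vertex, and this orientation picks exactly one $H$-copy of each weight-1 edge. The combined edge set is a matching in $H$ of size $|w|$, so $\nu(H)$ equals the maximum 2-matching size in $G$.

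Since $H$ is bipartite with parts of equal size $|V(G)|$, the defect form of Hall's theorem yields $\nu(H) = |V(G)| - \max_{X \subseteq V(G)} \bigl(|X| - \bigl|\bigcup_{u \in X} N_G(u)\bigr|\bigr)$. It remains to show this maximum is attained on a stable set, in which case the defect equals $c_G(S)$ since $|\bigcup_{u \in S} N_G(u)| = |N_G(S)|$ by stability. Given an arbitrary $X$, I would let $S \subseteq X$ consist of the vertices isolated in $G[X]$. Then $S$ is stable, $N_G(S) \cap X = \emptyset$ by isolation, and every vertex of $X \setminus S$ lies in $\bigcup_{u \in X \setminus S} N_G(u)$ because its $X$-neighbour must itself be non-isolated. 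The sets $N_G(S)$ and $X \setminus S$ are therefore disjoint, so $|\bigcup_{u \in X} N_G(u)| \geq |N_G(S)| + |X \setminus S|$, giving $|X| - |\bigcup_{u \in X} N_G(u)| \leq |S| - |N_G(S)| = c_G(S)$ and completing the identification.

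The main obstacle will be the orientation step in the second paragraph: verifying that the weight-1 subgraph is a disjoint union of paths and cycles, and that such a graph admits an orientation with in-degree and out-degree at most one at every vertex. Neither claim is deep, but it is the only place in the proof where the integrality of 2-matchings—as opposed to a purely fractional capacity-2 relaxation—plays a role, and is what keeps the min-max identity from being subsumed by plain LP duality.
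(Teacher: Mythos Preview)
The paper does not prove \cref{lem:ML}; it simply quotes the result as \cite[Theorem~30.1]{Sch03} and uses it as a black box. Your proposal therefore cannot be compared to the paper's argument, but it is a correct and self-contained proof of the stated identity.

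Your approach is the standard one: the upper bound is the obvious double count, and the lower bound goes via the bipartite double cover to reduce to the defect form of Hall's theorem. Both directions are handled cleanly. The lifting of a $2$-matching to a matching in the double cover is carried out correctly---a vertex incident to a weight-$2$ edge has no other positive-weight edge, so the weight-$1$ subgraph indeed has maximum degree at most~$2$, and your orientation argument (out-arcs to the $A$-side, in-arcs to the $B$-side) produces a genuine matching in~$H$. The final step, replacing an arbitrary defect-maximising set $X$ by the stable set $S$ of its $G[X]$-isolates, is also fine; note that the paper's convention $N_G(S) = \bigl(\bigcup_{u \in S} N_G(u)\bigr) \setminus S$ matches $\bigcup_{u \in S} N_G(u)$ precisely because $S$ is stable, which you observe.

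One minor remark: your closing comment that integrality is what ``keeps the min-max identity from being subsumed by plain LP duality'' is a little misleading, since the LP relaxation of $2$-matchings (with $w \colon E(G) \to [0,2]$) has the same optimum and the same dual certificate via total unimodularity of the incidence matrix of the double cover; but this is tangential and does not affect the validity of your proof.
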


\subsection{Regularity}\label{subsec:regularity}
A \emph{connected matching} in a graph is a $1$-regular subgraph contained in a single connected component. With the help of Szemerédi's regularity lemma~\cite{Sze76}, the task of finding large cycles in a dense graph $G$ can be relaxed to finding large connected matchings in an appropriately defined reduced graph $R$. This idea was first used by Komlós, Sárközy, and Szemerédi~\cite{KSS98} to prove an approximate version of the Pósa-Seymour conjecture and then transferred to monochromatic cycle covers by \L{}uczak~\cite{Luc99,LRS98}. Ever since then, the method has become standard practice and fueled numerous advances \cite{ABL+22,BBG+14,BS16,DN17,KLLP21,LL21,Let19,Let22}, including many of the results mentioned in \cref{sec:intro}. We therefore limit ourselves to stating the necessary definitions and lemmas, mostly following the notation from Lang and Sanhueza-Matamala~\cite{LS21}.

Let $A, B$ be two non-empty vertex subsets of a graph $G$ and denote the number of edges of $G$ with one endpoint in $A$ and the other in $B$ as $e_G(A, B)$. The \emph{density} of such a pair is then defined as $d_G(A, B) := e_G(A, B) / (\abs A \abs B)$. For $\epsilon > 0$, the pair $(A, B)$ is called \emph{$\epsilon$-regular} if $\abs {d_G(A', B') - d_G(A, B)} \leq \epsilon$ for all $A' \subset A$ with $\abs {A'} \geq \epsilon \abs A$ and $B' \subset B$ with $\abs {B'} \geq \epsilon \abs B$. Moreover, an $\epsilon$-regular pair with density at least $d$ is called \emph{$(\epsilon, d)$-regular}.

Now let $\cV = \{V_j\}_{j=1}^r$ be a family of $r$ disjoint sets and $R$ be a graph on $[r]$. We say $(G,\cV)$ is an \emph{$R$-partition} if $\bigcup_{j=1}^r V_j = V(G)$, the induced graph $G[V_j]$ is edgeless for every $j \in [r]$, and~$jk$ is an edge of~$R$ whenever $e_G(V_j, V_k) > 0$. We call the sets $V_j$ of the partition its \emph{clusters} and refer to $R$ as the \emph{reduced graph} of $G$ or, more precisely, $(G,\cV)$. Such an $R$-partition $(G, \cV)$ is called \emph{balanced} if all clusters have the same size. Furthermore, it is called \emph{$(\epsilon, d)$-regular} if $(V_j, V_k)$ is $(\epsilon, d)$-regular for each $jk\in E(R)$. Finally, we say that $G' \subseteq G$ is an \emph{$(\epsilon, d)$-approximation} of $G$ if $\abs {G'} \geq (1 - \epsilon)\abs G$ and we have $\deg_{G'}(v) \geq \deg_{G}(v) - d \abs G$ for all $v \in V(G')$.

We will use the degree version of Szemerédi's regularity lemma, adapted for the use with $2$-edge-coloured graphs \cite[Theorems 1.10 and 1.18]{KS96}. With the notation introduced above, it can be formulated as follows:

\begin{lemma}[Regularity lemma]\label{lem:RL}
	Let $1/n \ll 1/r_1 \ll 1/r_0, \epsilon, d$. Let $G_1, G_2$ be graphs on $n$ common vertices. Then there are $r_0 \leq r \leq r_1$ and a family $\cV$ of $r$ disjoint subsets of these vertices with the following properties: For each $i \in [2]$, there is $G'_i \subset G_i$ and a graph $R_i$ on $[r]$ such that
	\begin{enumerate}
		\item $G'_i$ is an $(\epsilon, d+\epsilon)$-approximation of $G_i$ and
		\item $(G'_i,\cV)$ is a balanced $(\epsilon, d)$-regular $R_i$-partition.
	\end{enumerate}
\end{lemma}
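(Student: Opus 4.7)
The plan is to invoke a coloured version of the degree form of Szemerédi's regularity lemma, as provided in \cite{KS96}. Fix an auxiliary parameter $\epsilon' \ll \epsilon, d$. Since $G_1$ and $G_2$ share the same vertex set, applying the uncoloured regularity lemma iteratively (first to $G_1$, then refining each part using $G_2$) produces, for $n$ sufficiently large, a single balanced partition $\cV = \{V_1, \ldots, V_r\}$ with $r_0 \le r \le r_1$ such that for each $i \in [2]$, all but at most $\epsilon' r^2$ cluster pairs are $\epsilon'$-regular in $G_i$. Any exceptional set produced by the lemma can be redistributed evenly across the clusters with negligible loss, thanks to the hierarchy.

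Next, for each $i \in [2]$, I would form the subgraph $G_i' \subset G_i$ by deleting every edge lying within a single cluster, within an $\epsilon'$-irregular pair, or within a pair of density less than $d$ in $G_i$, and then by additionally discarding every vertex whose degree has thereby dropped by more than $(d+\epsilon)n$. The standard degree-form argument shows that the total number of discarded vertices is at most $\epsilon n$, so $|G_i'| \geq (1-\epsilon)n$ and $\deg_{G_i'}(v) \geq \deg_{G_i}(v) - (d+\epsilon)n$ for every $v \in V(G_i')$, confirming that $G_i'$ is an $(\epsilon, d+\epsilon)$-approximation of $G_i$. Trimming each cluster down to a common size preserves the balanced structure at the cost of a further negligible number of vertices.

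Finally, I would define $R_i$ on $[r]$ by placing an edge $jk$ whenever the pair $(V_j, V_k)$ is $\epsilon$-regular in $G_i'$ with density at least $d$. By construction every surviving edge of $G_i'$ lies in such a pair, so $(G_i', \cV)$ is a balanced $(\epsilon, d)$-regular $R_i$-partition, as required.

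The only conceptual step is the coordination of the two colour classes so that they share a single partition $\cV$; this is why one invokes the coloured variant of the regularity lemma rather than applying the uncoloured version to each colour independently, which would in general yield incompatible partitions. Everything else is routine parameter bookkeeping under the hierarchy $1/n \ll 1/r_1 \ll 1/r_0, \epsilon, d$, and I expect no real obstacle beyond carefully tracking which constants depend on which.
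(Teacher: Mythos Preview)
The paper does not give its own proof of this lemma; it is quoted as a black box from \cite[Theorems~1.10 and~1.18]{KS96}. Your outline is the standard derivation and is correct in spirit, but one step needs repair. You discard, \emph{separately for each $i \in [2]$}, the vertices whose degree in $G_i$ has dropped by more than $(d+\epsilon)n$, and then trim clusters. However, the conclusion demands a single family $\cV$ with $\bigcup_j V_j = V(G_i')$ for both $i$ (this is part of the definition of an $R_i$-partition), which forces $V(G_1') = V(G_2')$. So the deletion must be done jointly: remove the union of the two bad sets (still a negligible number of vertices in total under the hierarchy) and only then trim each cluster once, to a common size. You correctly flag the shared partition as the sole coordination point, but the vertex removal and trimming must be shared across the two colours as well.

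A minor remark: there is no need to ``redistribute the exceptional set evenly across the clusters''. In the paper's formulation $\cV$ is merely a family of disjoint subsets, not a partition of all $n$ vertices; the proof of \cref{thm:main-two} explicitly uses that each cluster has size between $(1-\epsilon)n/r$ and $n/r$. So the exceptional set is simply left outside $\cV$, which is both simpler and avoids disturbing the regularity of the surviving pairs.
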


As already mentioned, the method introduced by \L{}uczak allows us to lift large connected matchings in a reduced graph $R_i$ to large cycles in the corresponding $G_i$. In fact, Christofides, Hladký, and Máthé~\cite{CHM14} observed that the same is also true for fractional matchings and similarly, $2$-matchings. Formally, the following statement holds:

\begin{lemma}[From connected matchings to cycles]\label{lem:CL}
	Let $1/n \ll 1/r \ll \epsilon \ll d \ll \eta \ll \beta$. Let $G_1, G_2$ be graphs on $n$ common vertices, $\cV$ be a family of $r$ disjoint subsets of these vertices, and $R_1, R_2$ be graphs on $[r]$. Suppose that $G_i' \subset G_i$ is an $(\epsilon, d + \epsilon)$-approximation of $G_i$ and $(G'_i,\cV)$ is a balanced $(\epsilon, d)$-regular $R_i$-partition for $i \in [2]$.
	
	Let $H$ be the union of $m_1$ components of $R_1$ and $m_2$ components of $R_2$, and suppose that there is a $2$-matching in $H$ that covers at least $(1 - \eta)r$ vertices of $R_1 \cup R_2$. Then there are pairwise vertex-disjoint cycles $C_1^1, \ldots, C_1^{m_1} \subset G_1$ and $C_2^1, \dots, C_2^{m_2} \subset G_2$, which together cover at least $(1 - \beta)n$ vertices of $G_1 \cup G_2$.
\end{lemma}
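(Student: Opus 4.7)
The plan is to use the standard Łuczak blow-up method for lifting connected matchings to cycles, adapted to $2$-matchings in the style of Christofides--Hladký--Máthé~\cite{CHM14}. I will build one cycle per monochromatic component of $H$; the heart of the argument is the sub-claim that for any component $C$, say of colour $i$, the restriction of the $2$-matching $w$ to $C$ can be realised as a cycle in $G'_i$ that covers nearly all of its represented vertex mass across the clusters of $C$.

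Since a cluster $V_j$ may lie in one red and one blue component simultaneously, I would first perform a global allocation step. For each cluster $V_j$ and each component $C$ of $H$ containing it, set $w_C(j) := \sum_{k \,:\, jk \in C} w(jk)$. The $2$-matching constraint $\sum_{C \ni j} w_C(j) \le 2$ then allows me to partition $V_j$ into at most two subsets of sizes approximately $\tfrac{1}{2} w_C(j) \cdot \abs{V_j}$, one dedicated to each component using $V_j$. The total dedicated vertex mass across all components is $(n/r) \cdot \abs{w} \ge (1-\eta) n$, up to negligible rounding errors.

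Within each component $C$ I then decompose $w$ restricted to $C$ into a matching $M_2$ of weight-$2$ edges and a subgraph $M_1$ of weight-$1$ edges; by the $2$-matching constraint, $M_1$ has maximum degree at most $2$ and is therefore a disjoint union of paths and cycles. Using connectivity of $C$ in $R_i$, I thread the edges of $M_2$ and the pieces of $M_1$ into a single closed walk in $C$, joined by linking segments of bounded length. Standard regular-pair embedding, using that each pair visited is $(\epsilon, d)$-regular and that almost every vertex in a cluster has the expected density of neighbours in its partners, then lets me realise this closed walk as a cycle in $G'_i$: along each edge of $w$ inside $C$ I build a sub-path covering almost all dedicated vertices of both endpoint clusters, and I splice consecutive sub-paths along the linking segments, sacrificing only $O(\epsilon) \abs{V_j}$ vertices per cluster visited.

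The main obstacle, and the reason for the constant hierarchy $\epsilon \ll d \ll \eta \ll \beta$, is the bookkeeping: one must ensure (i) that the allocation of each cluster is consistent with the $2$-matching weights, (ii) that the $O_r(1)$ linking segments neither overconsume any cluster nor clash between components, and (iii) that the total loss from regular-pair wastage plus the at most $\eta r$ clusters missed by $w$ stays below $\beta n$. With $1/r \ll \beta$ these estimates close comfortably, and processing components one by one produces pairwise vertex-disjoint cycles $C_1^1, \dots, C_1^{m_1} \subset G_1$ and $C_2^1, \dots, C_2^{m_2} \subset G_2$ together covering at least $(1-\beta)n$ vertices, as required.
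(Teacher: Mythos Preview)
The paper does not prove \cref{lem:CL}; it is stated as a known tool, attributed to the method of \L{}uczak~\cite{Luc99,LRS98} and its $2$-matching variant observed by Christofides, Hladk\'y and M\'ath\'e~\cite{CHM14}, and used as a black box. So there is no ``paper's own proof'' to compare against.

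Your sketch is the standard argument one finds in the cited references: decompose the $2$-matching into weight-$2$ edges and a degree-$\le 2$ subgraph of weight-$1$ edges, allocate each cluster proportionally among the components using it, thread the matching edges within each component into a closed walk via connectivity in $R_i$, and then realise each walk as a long cycle in $G_i'$ using regular-pair embedding. The bookkeeping you flag (consistency of the allocation, bounded linking cost, cumulative loss below $\beta n$) is exactly what the hierarchy $1/r \ll \epsilon \ll d \ll \eta \ll \beta$ is set up to absorb. This is a faithful outline of the standard proof and is consistent with what the paper is invoking.
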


\section{Proof of the main results}\label{sec:proof-main}
In this section, we show the main results detailed in \cref{subsec:main}.

\subsection{A Pósa-type condition for two cycles}\label{subsec:proof-main-two}
Let us start with our first main result, \cref{thm:main-two}. Its proof follows the argument outlined in \cref{subsec:method} and uses the tools of \cref{sec:tools} to simplify the problem to finding a large $2$-matching in two monochromatic components of the reduced graph. For convenience, we introduce the following notation to abbreviate the Pósa-type condition.

\begin{definition}\label{defi:Posa}
	A graph $G$ on $n$ vertices is called \emph{$(n, \gamma)$-Pósa} if the degree sequence $d_1 \leq \ldots \leq d_n$ of $G$ satisfies $d_j > j + (1/2 + \gamma)n$ for all $1 \leq j < n/4$.
\end{definition}

The structural analysis is then encapsulated in the following lemma, whose proof we defer to \cref{sec:proof-structural-two}.

\begin{lemma}[Structural lemma for two cycles]\label{lem:structural-two}
	Let $1 / n \ll \gamma$ and $G$ be $(n, \gamma)$-Pósa. Suppose $G$ is $2$-edge-coloured. Then there are a red and a blue component of $G$ whose union $H$ is a spanning subgraph of $G$ without contracting sets.
\end{lemma}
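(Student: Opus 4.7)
The approach is to invoke \cref{lem:ML}, which reduces the lemma to exhibiting a red component $R$ and a blue component $B$ with $V(R) \cup V(B) = V(G)$ for which $H := R \cup B$ carries a perfect $2$-matching, i.e., admits no contracting set. My plan is to choose $(R, B)$ extremally — first maximising $|V(R) \cup V(B)|$ and then, among all such pairs, $|V(R) \cap V(B)|$ — and argue by contradiction. Pósa at $j = 1$ already forces $\delta(G) > (1/2 + \gamma)n + 1 > n/2$.

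For the first step, I would verify that the first maximum equals $n$. Otherwise, for $v \in Y := V(G) \setminus (V(R) \cup V(B))$, any $G$-edge from $v$ to $V(R) \cap V(B)$ would have to be red (forcing $v \in V(R)$) or blue (forcing $v \in V(B)$), both absurd; hence $\deg_G(v) \leq n - |V(R) \cap V(B)| - 1$. Applying the Pósa-type inequality at $j = |Y|$, or at $j = \lceil n/4 \rceil - 1$ when $|Y| \geq n/4$, then bounds $|Y| + |V(R) \cap V(B)|$ from above; combined with a swap of $R$ (or $B$) for the red (or blue) component of some $v \in Y$, this yields a pair with strictly larger coverage, contradicting extremality.

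Next, fix this spanning pair and suppose that $H$ has a contracting set $S$ with $T := N_H(S)$ and $|T| < |S|$. If $v \in S \cap V(R) \cap V(B)$, every $G$-edge at $v$ lies in $H$ (its red edges in $R$, its blue edges in $B$), giving $\deg_G(v) = \deg_H(v) \leq |T|$; combined with $|T| < |S|$, $|S| + |T| \leq n$ and $\deg_G(v) > (1/2 + \gamma)n$, this is absurd. Hence $S = S_R \sqcup S_B$ with $S_R \subseteq V(R) \setminus V(B)$ and $S_B \subseteq V(B) \setminus V(R)$, and no $G$-edge joins $S_R$ to $S_B$. Writing $a := |V(R) \setminus V(B)|$, $b := |V(B) \setminus V(R)|$, $c := |V(R) \cap V(B)|$, $T^R := T \cap V(R)$ and $T^B := T \cap V(B)$, for $v \in S_R$ the red neighbours of $v$ lie in $T^R$ and the blue ones in $V(R) \setminus V(B)$, so $\deg_G(v) \leq |T^R| + a - 1$, with the symmetric bound for $v \in S_B$. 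Applying the Pósa-type inequality at $j = |S_R|$ (or $\lceil n/4 \rceil - 1$ when $|S_R| \geq n/4$), and its analogue for $|S_B|$, summing, and using $|T^R| + |T^B| \leq |T| + c$ together with $a + b + c = n$ and $|T| < |S|$, produces the clean contradiction $|T| > |S|$ whenever $|S_R|$ and $|S_B|$ fall in the same Pósa regime.

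\emph{Main obstacle.} The delicate case is the asymmetric one, where (say) $|S_R| \geq n/4 > |S_B|$: the sum of the two Pósa inequalities stops just short of $|T| > |S|$ and the additional slack has to come from the extremal choice of $(R, B)$ made above. The expected mechanism is that the large stable set $S_R$ inside $V(R) \setminus V(B)$ forces an alternative monochromatic component — a non-trivial red component inside $V(B) \setminus V(R)$ or a blue component inside $V(R) \setminus V(B)$ — that can be substituted for $R$ or $B$ to build a spanning pair with strictly larger $|V(R) \cap V(B)|$, contradicting the second maximisation. Making this swap precise, while simultaneously controlling the degree bounds in $G$, the monochromatic component structure inside $V(R) \triangle V(B)$, and the boundary behaviour at $|S_R|, |S_B| \approx n/4$, is expected to be the technical heart of the proof.
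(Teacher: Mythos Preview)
Your proposal is explicitly incomplete: you identify the asymmetric case $|S_R| \geq n/4 > |S_B|$ as the ``technical heart'' but do not carry it out, and the swap mechanism you anticipate runs into a concrete obstruction. If you replace $B$ by the blue component $B(v)$ of some $v \in S_R \subset V(R)\setminus V(B)$, then $B(v) \subset V(R)\setminus V(B)$ and $V(R) \cup V(B(v)) = V(R)$, so the new pair is spanning only if $R$ already was; the symmetric replacement of $R$ by a red component inside $V(B)\setminus V(R)$ fails for the same reason. Thus the secondary extremality (maximising $|V(R)\cap V(B)|$) gives no leverage via the swap you describe. There is a similar gap already in your spanning step: after deriving $|Y| + |V(R)\cap V(B)| < n/2$, you assert that swapping to the component of some $v \in Y$ increases $|V(R)\cup V(B)|$, but this is not justified and is not obviously true for any single swap.

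The paper avoids extremality altogether. For the spanning part it picks a vertex $u_1$ of degree $>3n/4$, then finds a second such vertex $u_2$ in a different blue component and a third $v_1 \in \overline R$; a common-neighbour argument among four high-degree vertices (two inside $R$, two outside) yields a spanning pair $R \cup B$. For the absence of contracting sets, the paper takes the vertex $s_1 \in S$ of maximum degree, shows $|S| \geq n/4$ (using that the blue component $B_1$ of $s_1$ satisfies $|B_1| \leq n/2$ since $B$ is the largest blue component), hence $\deg_G(s_1) > 3n/4$, and then partitions $V(G)$ into six pieces according to membership in $B_1$, $S$, $N_H(S)$; two applications of \cref{obs:Posa}(3) to carefully chosen small sets produce inequalities that sum to the contradiction $n < n$. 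This sidesteps the $|S_R|/|S_B|$ dichotomy entirely. Your Pósa-summing argument does cleanly dispose of the two ``same regime'' cases, which is a nice observation the paper does not isolate, but to close the asymmetric gap you would need a genuinely new idea beyond component swapping.
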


\begin{proof}[Proof of \cref{thm:main-two}]
	Given $\beta > 0$, choose $1/n \ll 1/r_1 \ll 1/r_0 \ll \epsilon \ll d \ll \delta \ll \eta \ll \gamma \ll \beta$ such that \cref{lem:RL,lem:CL,lem:structural-two} are applicable with any $r_0 \leq r \leq r_1$ playing the role of $r$ in \cref{lem:CL} and the role of $n$ in \cref{lem:structural-two}. Additionally assure that $2\epsilon + \delta + \gamma \leq \beta$ as well as $d + \epsilon \leq \delta/2$ and $1/n \leq \epsilon / r_1$.
	
	Consider an $(n, \beta)$-Pósa $2$-edge-coloured graph $G$ and apply \cref{lem:RL}. This yields $r_0 \leq r \leq r_1$ and a family $\cV = \{V_j\}_{j=1}^r$ of $r$ disjoint sets, together with $(\epsilon, \delta/2)$-approximations $G_i'$ of $G_i$ and graphs $R_i$ on $[r]$ such that $(G_i', \cV)$ is a balanced $(\epsilon, d)$-regular $R_i$-partition for $i \in [2]$. Each of the $r$ clusters must then contain between $(1 - \epsilon)n/r$ and $n/r$ vertices.
	
	Since we want to apply \cref{lem:structural-two} to the graph $R := R_1 \cup R_2$ on $[r]$, we need to check that $R$ is $(r, \gamma)$-Pósa. For this, we note that as the $G_i'$ are $(\epsilon, \delta/2)$-approximations of the $G_i$, the graph $G' := G_1' \cup G_2'$ is an $(\epsilon, \delta)$-approximation of $G$. Now enumerate the clusters of $\cV$ by ascending maximum index of their vertices in the degree sequence $d_1 \leq \ldots \leq d_n$ of $G$ and pick $1 \leq j < r/4$.
	
	Then for each $j \leq k \leq r$, the vertex $v_k \in V_k$ of maximum such index $h_k$ must have a larger index than all vertices in $V_1, \ldots, V_k$, so we get $h_k \geq k(1 - \epsilon)n/r \geq j(1 - \epsilon)n/r$. By the choice of constants, we have $\epsilon n/r \geq \epsilon n/r_1 \geq 1$, so there must be some integer $j'$ with $j(1 - 2\epsilon)n/r \leq j' \leq j(1 - \epsilon)n/r \leq h_k$. As $j' < jn/r < n/4$ and $G$ is $(n, \beta)$-Pósa, it follows that
	\[
	\deg_G(v_k)
	= d_{h_k}
	\geq d_{j'}
	> j' + (1/2 + \beta)n
	\geq jn/r + (1/2 + \beta - 2\epsilon)n
	\,.
	\]
	Since $G'$ is an $(\epsilon, \delta)$-approximation of $G$, this implies that $\deg_{G'}(v_k) \geq \deg_G(v_k) - \delta n > jn/r + (1/2 + \gamma)n$. As each cluster of $\cV$ contains at most $n / r$ vertices, the number of clusters containing a vertex in $N_{G'}(v_k)$ must thus be at least $j + (1/2 + \gamma)r$. Due to the $(G_i', \cV)$'s being $R_i$-partitions, the indices of these clusters are then adjacent to $k$ in $R$, so $\deg_R(k) \geq j + (1/2 + \gamma)r$. 
	
	Applying this argument to all $j \leq k \leq r$, we find that the $j$-th entry of the degree sequence of $R$ must be at least $j + (1/2 + \gamma)r$. As this holds for all $1 \leq j < r/4$, the graph $R$ is $(r, \gamma)$-Pósa and thus satisfies all requirements of \cref{lem:structural-two} with $r$ playing the role of $n$. Hence, there are a red component $H_1 \subset R_1$ and a blue component $H_2 \subset R_2$ such that their union $H$ is a spanning subgraph of $G$ without contracting sets. By \cref{lem:ML}, there is a $2$-matching in $H$ that covers all $r \geq (1 - \eta)r$ vertices of $R$. But then all requirements of \cref{lem:CL} are fulfilled, which guarantees the existence of two vertex-disjoint cycles $C_1 \subset G_1$ and $C_2 \subset G_2$ together covering at least $(1 - \beta)n$ vertices of $G$. As the $G_i$'s only contain edges of one colour, these cycles are monochromatic and of different colours, so we are done.
\end{proof}

\subsection{An Ore-type condition for three cycles}\label{subsec:proof-main-three}
Using the same approach, we also want to prove \cref{thm:main-three}. In contrast to minimum degree or Pósa-type conditions, the Ore-type condition only approximately carries over to the reduced graph, which motivates the following abbreviation of our setting.

\begin{definition}\label{defi:Ore}
	A graph $G$ on $n$ vertices is called \emph{$(n, \gamma)$-Ore} if $\deg_G(u) + \deg_G(v) \geq (4/3 + \gamma)n$ holds for all $uv \notin E(G)$.
	A pair $(G, X)$ is called \emph{$(n, \delta, \gamma)$-Ore} if $G$ and $X$ are graphs on the same $n$ vertices such that $\Delta(X) < \delta n$ and $\deg_G(u) + \deg_G(v) \geq (4/3 + \gamma)n$ holds for all $uv \notin E(G \cup X)$.
\end{definition}

Excluding these exceptional edges $E(X)$ from the Ore-type condition means that we cannot exclude the occurrence of contracting sets, but only limit their contraction to a fraction of the total number of vertices in $R$. But since we only aim to cover almost all vertices of $G$ with few monochromatic cycles, this small loss is manageable and the following lemma suffices together with the tools of \cref{sec:tools}. Its proof is deferred to \cref{sec:proof-structural-three}.

\begin{lemma}[Structural lemma for three cycles]\label{lem:structural-three}
	Let $1 / n \ll \delta \ll \eta \ll \gamma$ and $(G, X)$ be $(n, \delta, \gamma)$-Ore. Suppose $G$ is $2$-edge-coloured. Then there are three monochromatic components of $G$ whose union $H$ contains at least $(1 - \eta)n$ vertices and has no $\eta n$-contracting sets.
\end{lemma}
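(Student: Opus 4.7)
The plan is to adapt the structural argument of \cite{ABL+21} for the minimum-degree setting to our Ore-type setting, via a reduction that isolates the low-degree vertices as a near-clique. The guiding insight is that the $\eta n$-contraction slack in the conclusion (absent in the Pósa-type \cref{lem:structural-two}) is exactly what accommodates the noise introduced by the exceptional graph $X$ and the existence of low-degree vertices.

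I would first reduce to an approximate minimum-degree condition. Let $L := \{v \in V(G) : \deg_{G \cup X}(v) < (2/3 + \gamma/4)n\}$. Any two non-adjacent vertices in $L$ would violate the Ore-type hypothesis of $(G, X)$, so $L$ is a clique in $G \cup X$ and therefore $|L| \leq (2/3 + \gamma/4)n + 1$. Since $\Delta(X) < \delta n$, the set $L$ is $\delta n$-close to a clique in $G$ itself, and every vertex of $V \setminus L$ satisfies $\deg_G(v) \geq (2/3 + \gamma/8)n$. Hence $G[V \setminus L]$ essentially satisfies the minimum-degree hypothesis of \cite{ABL+21} after rescaling, with the deviation absorbed by the hierarchy $\gamma \gg \eta \gg \delta$.

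I would then split on the size of $L$. If $|L| \leq \eta n / 10$, I apply the minimum-degree structural lemma of \cite{ABL+21} directly to $G[V \setminus L]$, obtaining three monochromatic components of $G[V \setminus L]$ whose union covers all but $(\eta/4)n$ vertices and admits no $(\eta n / 2)$-contracting set. Extending each chosen component to the monochromatic component of $G$ that contains it absorbs most of $L$ into the chosen components through the near-clique structure, without creating a large contracting set, because every $v \in L$ has many neighbours into any sizeable subset of $V$. If $|L| > \eta n / 10$, a Gyárfás-type argument applied to the near-clique $G[L]$ produces a monochromatic spanning component — say red — call it $F_L$. The remaining set $V \setminus L$ has at most $(1/3 - \gamma/4)n + 1$ vertices, each with large degree in $G$; I then mirror the ABL+21 case analysis on $V \setminus L$ to pick two further monochromatic components of $G$ (the colours depending on how the red and blue component structure of $G$ interacts with $L$) such that together with $F_L$ they cover all but $\eta n$ vertices.

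The main obstacle is verifying that the union $H$ of the three chosen components admits no $\eta n$-contracting set in the second case. A candidate stable set $S \subseteq H$ with $|S| - |N_H(S)| > \eta n$ must either meet $V \setminus L$, where the degree bound $\deg_G(v) \geq (2/3 + \gamma/8)n$ for $v \in S \cap (V \setminus L)$ forces $|N_H(S)|$ to be large, or be concentrated inside $L$, where the near-clique structure of $G[L]$ rules out any large stable set to begin with. Both alternatives lead to contradictions once the constants are chosen correctly along $\delta \ll \eta \ll \gamma$, with the slack in the $\eta n$-contraction bound absorbing both the $\delta n$-per-vertex noise from $X$ and the combinatorial losses from extending components through $L$. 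Beyond these bookkeeping issues, the structural case analysis of \cite{ABL+21} transfers essentially unchanged.
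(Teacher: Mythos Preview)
Your approach is genuinely different from the paper's. The paper never separates low- and high-degree vertices; instead it first shows (via an Ore-type counting argument) that two monochromatic components already cover almost everything, then splits into the \emph{plain}, \emph{mixed}, and \emph{split} cases according to whether one such component is near-spanning, the two have different colours, or the same colour, and analyses the component structure directly in each case. The split case in particular requires a long chain of lemmas locating putative contracting sets inside specific ``diagonal'' intersections $R_k \cap B_j$ and then deriving a numerical contradiction with the Ore condition.

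Your proposal, by contrast, has a concrete gap in Case~2. You assert that when $|L| > \eta n/10$, the high-degree set satisfies $|V \setminus L| \le (1/3 - \gamma/4)n + 1$. But that bound would require $|L| \ge (2/3 + \gamma/4)n - 1$, which is the \emph{upper} bound you derived for $|L|$, not a consequence of the case hypothesis $|L| > \eta n/10$. For any intermediate size, say $|L| = n/2$, your dichotomy fails: Case~1 is excluded by the threshold, while in Case~2 the set $V \setminus L$ still has about $n/2$ vertices, so the argument ``mirror the ABL+21 case analysis on $V \setminus L$'' cannot proceed as described. Nor is this easily repaired by moving the threshold: if you push it up so that $|V \setminus L|$ is genuinely small in Case~2, then in Case~1 deleting $L$ destroys the minimum-degree condition on $G[V \setminus L]$ needed to invoke \cite{ABL+21}. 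The whole intermediate range $\eta n \ll |L| \ll (2/3)n$ is not handled by either branch.

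A secondary issue in Case~1: when you extend the three monochromatic components $C_i$ of $G[V \setminus L]$ to components $\tilde C_i$ of $G$, the extension can absorb vertices of $V \setminus L$ that lay outside $C_1 \cup C_2 \cup C_3$ (namely, in other components of $G[V\setminus L]$ that merge through $L$). So a stable set $S \subset \tilde C_1 \cup \tilde C_2 \cup \tilde C_3$ need not restrict to a subset of $V(C_1 \cup C_2 \cup C_3)$, and the reduction to a contracting set in $H'$ needs more care than ``without creating a large contracting set''. This is fixable with bookkeeping (the uncovered part of $V\setminus L$ is small), but the Case~2 error is structural.
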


\begin{proof}[Proof of \cref{thm:main-three}]
	Given $\beta > 0$, choose $1/n \ll 1/r_1 \ll 1/r_0 \ll \epsilon \ll d \ll \delta \ll \eta \ll \gamma \ll \beta$ such that \cref{lem:RL,lem:CL,,lem:structural-three} are applicable with any $r_0 \leq r \leq r_1$ playing the role of $r$ in \cref{lem:CL} and the role of $n$ in \cref{lem:structural-three}, as well as with $2\eta$ playing the role of $\eta$ in \cref{lem:CL}. Additionally assure that $\delta + \gamma \leq \beta$ as well as $d + \epsilon \leq \delta/4$ and $\epsilon < 1/2$.
	
	Consider an $(n, \beta)$-Ore $2$-edge-coloured graph $G$ and apply \cref{lem:RL}. This yields $r_0 \leq r \leq r_1$ and a family $\cV = \{V_j\}_{j=1}^r$ of $r$ disjoint sets, together with $(\epsilon, \delta/4)$-approximations $G_i'$ of $G_i$ and graphs $R_i$ on $[r]$ such that $(G_i', \cV)$ is a balanced $(\epsilon, d)$-regular $R_i$-partition for $i \in [2]$. Each of the $r$ clusters must then contain between $(1 - \epsilon)n/r$ and $n/r$ vertices.
	
	Since we want to apply \cref{lem:structural-three} to the graph $R := R_1 \cup R_2$ on $[r]$, we need to check that for some appropriately defined graph $X$ on $[r]$, the pair $(R, X)$ is $(r, \delta, \gamma)$-Ore. For this, we note that as the $G_i'$ are $(\epsilon, \delta/4)$-approximations of the $G_i$, the graph $G' := G_1' \cup G_2'$ is an $(\epsilon, \delta/2)$-approximation of $G$. We let
	\[
	E(X) := \{ jk \notin E(R) \mid uv \in E(G) \setminus E(G') \text{ for all } u \in V_j, v \in V_k \}
	\,.
	\]
	Observe that every $u \in V_j$ loses at most $\delta n/2$ incident edges from $G$ to $G'$. So there can be at most $\delta n / (2(1 - \epsilon)n/r) < \delta r$ clusters $V_k$ in $\cV$ such that $uv \in E(G) \setminus E(G')$ for all $v \in V_k$. This shows that $\Delta(X) < \delta r$.
	
	Now pick $jk \notin E(R \cup X)$. By definition of $X$, there is $u \in V_j, v \in V_k$ such that $uv \notin E(G) \setminus E(G')$. But also $uv \notin E(G') = E(G_1') \cup E(G_2')$ as otherwise $jk \in E(R) = E(R_1) \cup E(R_2)$ would hold because the $(G_i', \cV)$'s are $R_i$-partitions. So $uv \notin E(G)$. This together with $G'$ being an $(\epsilon, \delta /2)$-approximation of the $(n, \beta)$-Ore graph $G$ yields 
	\[
	\deg_{G'}(u) + \deg_{G'}(v) \geq \deg_G(u) + \deg_G(v) - \delta n \geq (4/3 + \beta - \delta)n \geq (4/3 + \gamma)n
	\,.
	\]
	As each cluster of $\cV$ contains at most $n / r$ vertices, the number of clusters containing a vertex in $N_{G'}(u)$ plus the number of clusters containing a vertex in $N_{G'}(v)$ must thus be at least $(4/3 + \gamma)r$. Due to the $(G_i', \cV)$'s being $R_i$-partitions, the indices of these clusters are then adjacent to $j$ or $k$ in $R$, so $\deg_R(j) + \deg_R(k) \geq (4/3 + \gamma)r$ as desired.
	
	This shows that $(R, X)$ is $(r, \delta, \gamma)$-Ore and thus satisfies all requirements of \cref{lem:structural-three} with $r$ playing the role of $n$. Hence, there are three monochromatic components $H_1, H_2, H_3 \subset R$ such that their union $H$ covers at least $(1 - \eta)r$ vertices of $R$ and every stable set $S$ of $H$ satisfies $c_H(S) \leq \eta r$. By \cref{lem:ML}, there is a $2$-matching in $H$ that covers at least $\abs H - \eta r \geq (1 - 2\eta)r$ vertices of $R$. But then all requirements of \cref{lem:CL} are fulfilled, which guarantees the existence of three pairwise vertex-disjoint cycles in $G_1, G_2$ together covering at least $(1 - \beta)n$ vertices of $G$. As the $G_i$'s only contain edges of one colour, these cycles are monochromatic and we are done.
\end{proof}

\section{Proof of the structural lemma for two cycles} \label{sec:proof-structural-two}
In this section, we show \cref{lem:structural-two}, which completes the proof of \cref{thm:main-two}. Recall from \cref{defi:Posa} that the input graph $G$ is a $2$-edge-coloured graph on $n$ vertices with $d_j > j + (1/2 + \gamma)n$ for all $1 \leq j < n/4$. We have to find a red and a blue component $R, B$ of $G$ such that their union $H := R \cup B$ is a spanning subgraph of $G$ without contracting sets. Before we address any details, we collect a few general observations.

\begin{observation}\label{obs:Posa}
	Let $1/n \ll \gamma$ and $G$ be $(n, \gamma)$-Pósa. Then all of the following hold:
	\begin{enumerate}
		\item $\delta(G) > n/2$.
		\item Every set $U \subset V(G)$ with $\abs U \geq n/4$ contains a vertex $u \in U$ with $\deg_G(u) > 3n/4$.
		\item Every set $U \subset V(G)$ with $0 < \abs U < n/4$ contains a vertex $u \in U$ with $\deg_G(u, \overline U) > n/2$.
	\end{enumerate}
\end{observation}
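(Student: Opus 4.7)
All three claims should fall out of the degree-sequence hypothesis $d_j > j + (1/2 + \gamma)n$ by elementary bookkeeping, obtained by instantiating the inequality at the right index for each part. Part~(1) is immediate: setting $j = 1$ gives $\delta(G) = d_1 > 1 + (1/2 + \gamma)n$, which comfortably exceeds $n/2$.

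For part~(2), my plan is to push $j$ as close to $n/4$ as permitted by the hypothesis, taking $j = \lceil n/4 \rceil - 1$. The Pósa inequality then delivers $d_j > j + (1/2 + \gamma)n \ge 3n/4 - 1 + \gamma n$, which surpasses $3n/4$ as soon as $\gamma n > 1$, a bound guaranteed by $1/n \ll \gamma$. Hence all of $v_j, v_{j+1}, \ldots, v_n$ have degree strictly greater than $3n/4$, leaving at most $j - 1 < n/4 \le \abs U$ vertices of $G$ with degree below this threshold. So $U$ cannot be contained in the low-degree set and must contain a vertex of degree above $3n/4$.

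For part~(3), I would set $j := \abs U$ (so $1 \le j < n/4$) and pick the vertex $u^* \in U$ of maximum degree. Because all $j$ vertices of $U$ have degree at most $\deg_G(u^*)$, its position in the global degree sequence of $G$ must be at least $j$, giving $\deg_G(u^*) \ge d_j > j + (1/2 + \gamma)n$. Combined with the trivial bound $\deg_G(u^*, U) \le \abs U - 1 = j - 1$, this yields $\deg_G(u^*, \overline U) \ge \deg_G(u^*) - (j-1) > (1/2 + \gamma)n + 1 > n/2$, exactly as claimed. None of these three steps poses a real obstacle; the only mild subtlety anywhere is the boundary issue in part~(2), where the hypothesis is only available for $j$ strictly below $n/4$, forcing one to invoke $1/n \ll \gamma$ to absorb the resulting off-by-one slack when comparing $d_j$ against the target $3n/4$.
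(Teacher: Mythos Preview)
Your proposal is correct and matches the paper's own argument essentially step for step: part~(1) via $j=1$, part~(2) by pushing $j$ to $\lceil n/4\rceil - 1$ and using $1/n \ll \gamma$ to absorb the off-by-one, and part~(3) by taking $j = \abs U$ and the maximum-degree vertex of $U$. The only cosmetic difference is that in~(2) the paper phrases the conclusion via ``the vertex of $U$ with maximum index in the degree sequence has $\deg_G(u) \ge d_{\abs U} \ge d_j$'' rather than your counting of low-degree vertices, but these are equivalent.
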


\begin{proof}
	The first statement follows directly from the Pósa-type condition when choosing $j = 1$. For (2) and (3), let $U$ be non-empty and $u \in U$ the vertex of maximum index in the degree sequence of $G$, which implies $\deg_G(u) \geq d_{\abs U}$. If $\abs U \geq n/4$, we can require $1/n \leq \gamma$ and apply the Pósa-type condition with $j = \lceil n/4 - 1 \rceil$ to find $d_{\abs U} \geq d_j > j + (1/2 + \gamma)n \geq 3n/4$. Conversely, $\abs U < n/4$ allows us to apply the Pósa-type condition directly and find $d_{\abs U} > \abs U + n/2$, so $\deg_G(u, \overline U) \geq \deg_G(u) - \abs U > n/2$.
\end{proof}

\subsection{Component structure}
We first show that there are a red and a blue component $R, B$ in $G$ such that their union $R \cup B$ is spanning. Formally, we prove the following intermediate result.

\begin{lemma}\label{lem:structural-two-intermediate}
	Let $1 / n \ll \gamma$ and $G$ be $(n, \gamma)$-Pósa. Suppose $G$ is $2$-edge-coloured. Then there are a red and a blue component of $G$ whose union $H$ is a spanning subgraph of $G$.
\end{lemma}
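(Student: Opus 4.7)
The plan is to pick a vertex $v$ of high degree, use its monochromatic components as the initial candidates, and then refine using a second vertex $w$ if necessary. By \cref{obs:Posa}(2) applied to $U = V(G)$, there exists $v \in V(G)$ with $\deg_G(v) > 3n/4$; let $R$ and $B$ denote the red and blue components of $G$ containing $v$. Since $R \cup B \supseteq N_G[v] \cup \{v\}$, we have $\abs{V(G) \setminus (R \cup B)} \leq n - 1 - \deg_G(v) < n/4$. If this set is empty, the pair $(R, B)$ already spans; otherwise \cref{obs:Posa}(3) supplies some $w \in V(G) \setminus (R \cup B)$ with $\deg_G(w, R \cup B) > n/2$.

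Since $w$ lies outside both $R$ and $B$, every edge from $w$ to $R$ is blue and every edge from $w$ to $B$ is red, so $w$ is non-adjacent to each vertex of $R \cap B$. Writing $p := \deg_G(w, R \setminus B)$ and $q := \deg_G(w, B \setminus R)$ therefore gives $p + q > n/2$. Let $R'$ and $B'$ denote the red and blue components of $w$, and set $X := R \cap B'$ and $Y := R' \cap B$: the blue neighbours of $w$ in $R$ lie in $X$ and the red neighbours of $w$ in $B$ lie in $Y$, so $\abs X \geq p$ and $\abs Y \geq q$. If $p = 0$, then $\abs Y \geq q > n/2$, and any hypothetical $x \in V(G) \setminus (R' \cup B)$ would satisfy $\deg_G(x) \leq n - 1 - \abs{R' \cap B} < n/2$ by the same two-colour obstruction, contradicting \cref{obs:Posa}(1); hence the pair $(R', B)$ spans. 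The case $q = 0$ is symmetric, with $(R, B')$ spanning.

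The main obstacle is the remaining case $p, q \geq 1$, in which $X$ and $Y$ are both non-empty with $\abs X + \abs Y > n/2$ and, since $R \neq R'$ and $B \neq B'$ give $X \cap Y = \emptyset$ and force any edge between $X$ and $Y$ to be simultaneously red and blue, no edges run between $X$ and $Y$. I plan to derive a contradiction with the Pósa-type condition via a dichotomy on $\min(\abs X, \abs Y)$. If both $\abs X, \abs Y \geq n/4$, then $X \cup Y$ contributes at least $n/2$ vertices of degree at most $n - 1 - n/4 = 3n/4 - 1$, forcing $d_j \leq 3n/4 - 1$ at $j := \lceil n/4 \rceil - 1 < n/4$, which clashes with the Pósa bound $d_j > j + (1/2 + \gamma)n \geq 3n/4 - 1 + \gamma n$. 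Otherwise, assume without loss of generality $\abs X < n/4$; then each of the $\abs X \geq 1$ vertices of $X$ has degree at most $n - 1 - \abs Y$, so $d_{\abs X} \leq n - 1 - \abs Y$, while Pósa at $j = \abs X$ demands $d_{\abs X} > \abs X + (1/2 + \gamma)n$, forcing $\abs X + \abs Y < n/2 - \gamma n - 1$ and contradicting $\abs X + \abs Y > n/2$.
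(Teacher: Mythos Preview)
Your proof is correct and takes a genuinely different route from the paper's. The paper first chooses a high-degree vertex $u_1$ with red component $R$ and blue component $B_1$, then (after handling the easy case $|R\cap B_1|\ge n/2$) finds a second high-degree vertex $u_2\in R\setminus B_1$, and separately a high-degree vertex $v_1\notin R$ whose blue component $B$ is the final candidate. Showing that $R\cup B$ spans then splits into two cases on $|\overline{R\cup B}|$: when large, a common-neighbour argument among four high-degree vertices $u_1,u_2,v_1,v_2$ forces a red edge across $R$ and $\overline R$; when small but non-empty, a finer analysis of the components of some $v'\in\overline{R\cup B}$ is needed.

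Your argument is more economical. You fix one high-degree vertex $v$ with components $R,B$, pick any $w\notin R\cup B$ via \cref{obs:Posa}(3), and work directly with the ``cross'' sets $X=R\cap B'$ and $Y=R'\cap B$. The observation that $X$ and $Y$ are disjoint, non-adjacent, and satisfy $|X|+|Y|>n/2$ lets you invoke the P\'osa degree-sequence condition itself (not only the consequences in \cref{obs:Posa}) to derive an immediate contradiction in the residual case $p,q\ge 1$. This bypasses the paper's second high-degree vertex $u_2$, the case analysis on $|\overline R|$ and $|\overline{R\cup B}|$, and the common-neighbour trick. The trade-off is that your final step uses the full P\'osa hypothesis rather than only \cref{obs:Posa}; the paper's argument, while longer, stays entirely within those three derived observations.
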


\begin{proof}
	By \cref{obs:Posa}(2), there is a vertex $u_1$ with $\deg_G(u_1) > 3n/4$. Let $R$ and $B_1$ be its red and blue component, respectively. Now if $\abs {R \cap B_1} \geq n/2$, \cref{obs:Posa}(1) implies that every vertex of $G$ is adjacent to some vertex in $R \cap B_1$. Thus, $R \cup B_1$ is a spanning subgraph of $G$ and we are done. So we may assume $\abs {R \cap B_1} < n/2$ for the remainder of this proof.
	
	In particular, $N_G(u_1) \setminus (R \cap B_1)$ contains at least $n/4$ vertices and can thus play the role of $U$ in \cref{obs:Posa}(2). Hence, there is a vertex $u_2 \in N_G(u_1) \setminus (R \cap B_1)$ with $\deg_G(u_2) > 3n/4$. Without loss of generality, we can assume the edge $u_1u_2 \in E(G)$ to be red, otherwise swap colours. This implies $u_2 \in R \setminus B_1$, and we denote the blue component of $u_2$ as $B_2$.
	
	If $R$ is spanning already, there is nothing to show. Similarly, if $\abs {\overline R} < n/4$, we can apply \cref{obs:Posa}(3) with $\overline R$ as $U$ to find $v \in \overline R$ with $\deg_G(v, R) > n/2$. As all these edges must be blue, the blue component $B(v)$ of $v$ satisfies $\abs {R \cap B(v)} > n/2$ and we are again done by the argument above. So we may assume $\abs {\overline R} \geq n/4$. Here, \cref{obs:Posa}(2) applied with $\overline R$ as $U$ guarantees that there is a vertex $v_1 \in \overline R$ with $\deg_G(v_1) > 3n/4$. We let $B$ be the blue component of $v_1$ and show that $R \cup B$ is the desired spanning subgraph $H$.
	
	Assuming otherwise, we will consider $\overline {R \cup B}$ as $U$ in either \cref{obs:Posa}(2) or \ref{obs:Posa}(3) and arrive at a contradiction in both cases. For $\abs {\overline {R \cup B}} \geq n/4$, \cref{obs:Posa}(2) guarantees the existence of $v_2 \in \overline {R \cup B}$ with $\deg_G(v_2) > 3n/4$. Now $u_1, u_2, v_1, v_2$ all have degree above $3n/4$ in $G$, so there must be a vertex $w$ that is adjacent to all four of them. However, at least one of the edges $wu_1$ and $wu_2$ must be red as the blue components of $u_1$ and $u_2$ differ. Similarly, at least one of $wv_1$ and $wv_2$ must be red. But this would put some $u_j \in R$ with $j \in [2]$ and some $v_k \in \overline R$ with $k \in [2]$ into the same red component, a contradiction.
	
	We may therefore assume $0 < \abs {\overline {R \cup B}} < n/4$ and apply \cref{obs:Posa}(3) to find $v' \in \overline {R \cup B}$ with $\deg_G(v', R \cup B) > n/2$. Denote its red and blue component as $R'$ and $B'$, respectively. We immediately observe that $N_G(v', R \cup B) \subset (R \cap B') \cup (R' \cap B)$. But no vertex in $R \cap B'$ can be adjacent in $G$ to $v_1 \in \overline R \cap B$ with $\deg_G(v_1) > 3n/4$, so we get $\abs {R \cap B'} < n/4$. Similarly, we can choose $j \in [2]$ such that $B \neq B_j$ and observe that no vertex in $R' \cap B$ can be adjacent in $G$ to $u_j \in R \cap B_j$ with $\deg_G(u_j) > 3n/4$, so $\abs {R' \cap B} < n/4$. Combining these results yields the desired contradiction
	\[
	n/2
	< \deg_G(v', R \cup B)
	\leq \abs {R \cap B'} + \abs {R' \cap B}
	< n/2
	\,.
	\]
	This proves that $H := R \cup B$ must indeed be a spanning subgraph of $G$.
\end{proof}

\subsection{Proof of the structural lemma}
The only thing missing to complete the proof of \cref{lem:structural-two} and thus, \cref{thm:main-two} is to exclude the existence of contracting sets. Having established two suitable monochromatic components in \cref{lem:structural-two-intermediate}, we can now prove the full statement.

\begin{proof}[Proof of \cref{lem:structural-two}]
	By \cref{lem:structural-two-intermediate}, there are a red and a blue component $R, B$ of $G$ such that $R \cup B$ is a spanning subgraph of $G$. If one of $R$ and $B$ is already spanning on its own, we may freely choose the component of the other colour and do so by picking the largest one. However, even if neither $R$ nor $B$ is spanning, it is easy to see that they must be the largest components of their respective colour in $G$. Indeed, choose $u \in R \setminus B$ and $v \in B \setminus R$. As $R \cup B$ is spanning and there is no edge between $R \setminus B$ and $B \setminus R$, we have $N_G(u) \subset R$ and $N_G(v) \subset B$. So both $R$ and $B$ must already contain $n/2$ vertices by \cref{obs:Posa}(1).
	
	For a proof by contradiction, fix some contracting set $S$ in $H := R \cup B$, which cannot be empty as $c_H(\emptyset) = 0$. We immediately observe that the stability of $S$ implies $N_H(s) \subset N_H(S)$ for all $s \in S$. By the contraction property, we additionally know that $\deg_H(s) \leq \abs {N_H(S)} < (\abs S + \abs {N_H(S)})/2 < n/2$, which is less than $\deg_G(s)$ by \cref{obs:Posa}(1). So every $s \in S$ must lose incident edges from $G$ to $H$ and can therefore not be a vertex of $R \cap B$. This implies $S \subset (R \setminus B) \cup (B \setminus R)$.
	
	Now let $s_1 \in S$ be the vertex of maximum index in the degree sequence of $G$. Without loss of generality, we can assume $s_1 \in R \setminus B$, otherwise swap colours. Denote the blue component of $s_1$ as $B_1$ and recall that the largest blue component of $G$ is $B$, so $\abs {B_1} \leq n/2$ must hold.
	
	We first want to show that $\abs S \geq n/4$. Assuming otherwise and using that $G$ is $(n, \gamma)$-Pósa, we find that
	\[
	\deg_G(s_1)
	\geq d_{\abs S}
	> \abs S + n/2
	> \abs {N_H(S)} + n/2
	\geq \deg_H(s_1) + n/2
	\,.
	\]
	So $s_1$ must lose more than $n/2$ incident edges from $G$ to $H$. As all of them are blue, $\abs {B_1} > n/2$ follows in contradiction to what we have observed above. So $\abs S \geq n/4$ must hold. In particular, applying \cref{obs:Posa}(2) with $S$ as $U$ yields $\deg_G(s_1) > 3n/4$ by choice of $s_1$ as the maximum degree vertex in $S$. 
	
	For the remainder of this proof, we partition $B_1$ into $S_1 := S \cap B_1$, $N_1 := N_H(S) \cap B_1$, and $W_1 := B_1 \setminus (S_1 \cup N_1)$. Similarly, we also partition its complement $\overline {B_1}$ into $S' := S \setminus B_1$, $N' := N_H(S) \setminus B_1$, and $W' = \overline {B_1} \setminus (S \cup N_H(S))$. Obviously, there can be no blue edges from $S_1 \subset B_1$ to $S' \cup W' \subset \overline {B_1}$ in $G$. But by stability of $S$ in $H$ and choice of $W'$, there can also be no red edges. So $\deg_G(s_1) > 3n/4$ implies that $\abs {S' \cup W'} < n/4$. 
	
	Moreover, $S' \cup W'$ cannot be empty as then $\overline {B_1} = N' \subset N_H(S)$ would imply that $n/2 \leq \abs {\overline {B_1}} \leq \abs {N_H(S)}$ or $n/2 < \abs {B_1}$ hold, both of which we already know to be false. This allows us to apply \cref{obs:Posa}(3) to $S' \cup W'$ as $U$ to find $u \in S' \cup W'$ with $\deg_G(u, \overline{S' \cup W'}) > n/2$. Additionally, $u$ cannot have an edge to $S_1$ by the argument above, so we conclude
	\begin{align}\label{eq:U=SW}
	n/2
	< \deg_G(u, \overline{S' \cup W'})
	\leq \abs {N_1} + \abs {W_1} + \abs {N'}
	\,.
	\end{align}
	Using the contraction property of $S$ together with $\abs {S_1} + \abs {N_1} + \abs {W_1} = \abs {B_1} \leq n/2$, we get
	\[
	\abs {S'}
	= \abs S - \abs {S_1}
	> \abs {N_1} + \abs {N'} - \abs {S_1}
	\overset{(\ref{eq:U=SW})}> n/2 - \abs {W_1} - \abs {S_1}
	\geq \abs {N_1}
	\geq 0
	\,.
	\]
	Together with $\abs {S'} \leq \abs {S' \cup W'} < n/4$, this allows us to apply \cref{obs:Posa}(3) to $S'$ as $U$ and obtain a vertex $s' \in S'$ with $\deg_G(s', \overline{S'}) > n/2$. By the same argument as above, none of these edges may go to $S_1 \cup W_1$ and we observe that 
	\begin{align}\label{eq:U=S}
	n/2
	< \deg_G(s', \overline{S'})
	\leq \abs {N'} + \abs {W'} + \abs {N_1}
	< \abs S + \abs {W'}
	\,.
	\end{align}
	But now adding the inequalities (\ref{eq:U=SW}) and (\ref{eq:U=S}) yields the desired contradiction
	\[
	n
	< \abs {N_1} + \abs {W_1} + \abs {N'} + \abs S + \abs {W'}
	= n
	\,.
	\]
	So $H = R \cup B$ cannot have a contracting set.
\end{proof}

\section{Proof of the structural lemma for three cycles} \label{sec:proof-structural-three}
In this section, we show \cref{lem:structural-three}, which completes the proof of \cref{thm:main-three}. Recall from \cref{defi:Ore} that $G$ is a $2$-edge-coloured graph on $n$ vertices and $X$ is another graph on the same vertices with bounded maximum degree. We try to find three monochromatic components of $G$ such that their union $H$ contains almost all vertices and has no stable sets with large contraction in $H$.

\subsection{Component structure}
Let us first shed some light on the structure of the monochromatic components of $G$. We find that two of them suffice to cover almost all vertices of $G$.

\begin{lemma}\label{lem:two-comps-almost-cover}
	Let $1 / n \ll \delta \ll \gamma$ and $(G, X)$ be $(n, \delta, \gamma)$-Ore. Suppose $G$ is $2$-edge-coloured. Then there are two monochromatic components of $G$ whose union contains at least $(1 - 6\delta)n$ vertices.
\end{lemma}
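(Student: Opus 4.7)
My plan is to argue by contradiction: I assume that no two monochromatic components of $G$ have union of size at least $(1-6\delta)n$, and derive a contradiction from the Ore-type condition on $(G,X)$. The first observation is that the contrary assumption gives a uniform upper bound on $G$-degrees: for every $v \in V(G)$, the inclusion $N_G(v) \subseteq R(v) \cup B(v)$, where $R(v)$ and $B(v)$ denote the red and blue components of $v$, together with our assumption yields
\[
\deg_G(v) \;\leq\; |R(v) \cup B(v)| - 1 \;<\; (1-6\delta)n - 1.
\]

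Next I would pick a vertex $v^*$ whose monochromatic components $R^* := R(v^*)$ and $B^* := B(v^*)$ maximise $|R^* \cup B^*|$, and set $W := V(G) \setminus (R^* \cup B^*)$. By assumption $|W| > 6\delta n$. The central structural observation is that no $w \in W$ has a $G$-edge to $R^* \cap B^*$: if $wz \in E(G)$ with $z \in R^* \cap B^*$, then the colour of $wz$ would force $w \in R^* = R(z)$ (if red) or $w \in B^* = B(z)$ (if blue), contradicting $w \in W$. In particular, $v^*$ and every vertex of $R^* \cap B^*$ is a $G$-non-neighbour of every $w \in W$.

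The plan is then to exploit the Ore inequality $\deg_{G \cup X}(u) + \deg_{G \cup X}(v) \geq (4/3+\gamma)n$ for $uv \notin E(G \cup X)$, together with $\Delta(X) < \delta n$, applied to suitable pairs: first, since $|W| > 6\delta n > \Delta(X)$, some $w_0 \in W$ satisfies $v^* w_0 \notin E(G \cup X)$, which combined with the previous step gives a forced lower bound on $\deg_G(w_0)$; more generally, pairs $(w_1,w_2) \in W \times W$ non-adjacent in $G \cup X$, or pairs straddling $W$ and $R^* \cap B^*$, yield analogous inequalities. Balanced against the upper bound $\deg_G(w_i) < (1-6\delta)n-1$ and against the structural exclusion of $R^* \cap B^*$ from the neighbourhoods of $W$-vertices, the goal is to produce a numerical contradiction.

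The main obstacle will be closing the estimate with the precise constant $6\delta$. A na\"ive bound such as $\deg_G(w_1)+\deg_G(w_2) < 2(1-6\delta)n-2$ is not small enough to violate Ore's inequality $\geq (4/3+\gamma-2\delta)n$ for small $\delta,\gamma$, so the argument must squeeze extra slack from the forbidden region: since $N_G(w) \cap (R^* \cap B^*) = \emptyset$ for $w \in W$, we actually have $\deg_G(w) \leq n-1-|R^* \cap B^*|$, and a careful interplay of the sizes $|R^* \cap B^*|$, $|R^* \setminus B^*|$, $|B^* \setminus R^*|$ and $|W|$ (together with maximality of $|R^*\cup B^*|$) is what I expect to drive the contradiction. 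The delicate point is choosing precisely which maximality condition to impose on $v^*$ so that the final count loses exactly $6\delta n$, rather than some weaker or stronger quantity.
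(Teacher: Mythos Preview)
Your setup is sound, but the proposal stalls at exactly the point you flag as ``the main obstacle,'' and the difficulty there is real rather than a matter of bookkeeping. Working with a single pair $(v^*,w)$ or $(w_1,w_2)$ and the Ore inequality cannot close the argument: the best you extract is something like $|W|+|R^*\cap B^*|\leq (2/3-\gamma+2\delta)n$, which is perfectly compatible with $|W|>6\delta n$ and $|R^*\cap B^*|$ small (or even zero). In the extreme case $R^*\cap B^*=\emptyset$ your structural observation that $W$ sends no edges into $R^*\cap B^*$ is vacuous, and maximality of $|R^*\cup B^*|$ only recovers $\deg_G(w)\leq |R^*\cup B^*|-1$, which you already noted is too weak. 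No choice of maximality condition on $v^*$ repairs this, because a single Ore application loses a factor of $2$ against the trivial bound $\deg_G(v)\leq n$.

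The paper's proof supplies the missing idea: instead of one pair, it produces \emph{three} vertices $v_1,v_2,v_3$ that are pairwise non-adjacent in $X$ and lie in pairwise distinct red components \emph{and} pairwise distinct blue components. Summing the three Ore inequalities gives $\sum_j\deg_G(v_j)>2n$, while the disjointness of the six components forces $\sum_j\deg_G(v_j)\leq\sum_j(|R_j|+|B_j|)\leq 2n$, a clean contradiction. The constant $6\delta$ then arises naturally from the case analysis needed to locate such a triple (each time one avoids at most two $N_X$-neighbourhoods of size $<\delta n$, and there are three rounds). Your framework of picking $v_1$ and then $v_2\in W\setminus N_X(v_1)$ is exactly the first two steps of this; the gap is that a third vertex, with both monochromatic components distinct from those of $v_1$ and $v_2$, is what actually forces the numbers to collide.
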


\begin{proof}
	We will prove this by assuming otherwise and finding three vertices $v_1, v_2, v_3$ which are pairwise non-adjacent in $X$ and lie in distinct red and blue components of $G$. These are then also non-adjacent in $G$, so as the graph $G \cup X$ is $(n, \gamma)$-Ore, we get $\deg_G(v_j) + \deg_G(v_k) > 4n/3$ for all $j, k \in [3]$ with $j \neq k$. Adding all three inequalities yields $2 \sum_{j = 1}^3 \deg_G(v_j) > 4n$. But for each $v_j$ in the monochromatic components $R_j$ and $B_j$, we have $\deg_G(v_j) \leq \abs {R_j} + \abs {B_j}$. This combines to $\sum_{j = 1}^3 \deg_G(v_j) \leq \sum_{j = 1}^3 \abs {R_j} + \sum_{j = 1}^3 \abs {B_j} \leq 2n$ and contradicts $2\sum_{j = 1}^3 \deg_G(v_j) > 4n$ from above.
	
	It remains to show that if every pair of monochromatic components of $G$ misses more than $6\delta n$ vertices, then there must be three vertices as described above. For this, let $v_1 \in V(G)$ be arbitrary and denote its monochromatic components as $R_1$ and $B_1$. As together they miss more than $6\delta n$ vertices and $\Delta(X) < \delta n$, we can select $v_2 \in V(G) \setminus (R_1 \cup B_1 \cup N_X(v_1))$. We denote its monochromatic components as $R_2, B_2$ and let $R' := V(G) \setminus (R_1 \cup R_2)$ as well as $B' := V(G) \setminus (B_1 \cup B_2)$. Now any $v_3 \in (R' \cap B') \setminus (N_X(v_1) \cup N_X(v_2))$ would complete a triple as described above, so we may assume $\abs {R' \cap B'} \leq 2\delta n$ for the remainder of this proof. As $R_1, R_2$ together miss more than $6\delta n$ vertices, this implies that at least one of $R' \cap B_1$ and $R' \cap B_2$ must contain more than $2\delta n$ vertices. Similarly, at least one of $R_1 \cap B'$ and $R_2 \cap B'$ must contain more than $2\delta n$ vertices.
	
	Let $j \in [2]$ be chosen such that $\abs {R' \cap B_j} > 2\delta n$. Without loss of generality, assume $j = 1$ (otherwise swap indices $1$ and $2$). If $\abs {R_1 \cap B'} > 2\delta n$ also holds, we can choose $u_1 \in (R' \cap B_1) \setminus N_X(v_2)$ and $v_3 \in (R_1 \cap B') \setminus (N_X(u_1) \cup N_X(v_2))$ to obtain a contradiction from $u_1, v_2, v_3$. So $\abs {R_1 \cap B'} \leq 2\delta n$ and thus, $\abs {R_2 \cap B'} > 2 \delta n$ must hold. By the same argument, we get $\abs {R' \cap B_2} \leq 2\delta n$. In summary, $\abs {R' \cap B_1}, \abs {R_2 \cap B'} > 2\delta n$ and $\abs {R_1 \cap B'}, \abs {R' \cap B_2} \leq 2\delta n$.
	
	Now consider the two monochromatic components $R_2$ and $B_1$, which by assumption miss $\abs {R' \cap B'} + \abs {R_1 \cap B'} + \abs {R' \cap B_2} + \abs {R_1 \cap B_2} = n - \abs {R_2 \cup B_1} > 6\delta n$ vertices of $G$. But as each of the three sets $R' \cap B'$, $R_1 \cap B'$, and $R' \cap B_2$ contains at most $2\delta n$ vertices, the fourth set $R_1 \cap B_2$ must be non-empty. Hence, we can choose the desired vertices as $w_1 \in R_1 \cap B_2$, $w_2 \in (R' \cap B_1) \setminus N_X(w_1)$, and $w_3 \in (R_2 \cap B') \setminus (N_X(w_1) \cup N_X(w_2))$.
\end{proof}

\cref{lem:two-comps-almost-cover} allows us to split the proof of \cref{lem:structural-three} into three cases depending on the $2$-edge-colouring of $G$. For convenience of notation, we introduce names for these three types of $2$-edge-colourings (\emph{plain}, \emph{mixed}, \emph{split}) and combine them with the degree conditions we impose on $(G, X)$. Formally, we define:

\begin{definition}
	Let $(G, X)$ be $(n, \delta, \gamma)$-Ore and suppose $G$ is $2$-edge-coloured.
	\begin{enumerate}
		\item A triple $(G, X, R)$ is called \emph{plain $(n, \delta, \gamma)$-Ore} if $R$ is a monochromatic component of $G$ with $\abs R \geq (1 - 10 \delta)n$.
		\item A quadruple $(G, X, R, B)$ is called \emph{mixed $(n, \delta, \gamma)$-Ore} if (1) does not hold for any choice of $R$, and $R, B$ are two monochromatic components of $G$ with different colours as well as $\abs {R \cup B} \geq (1 - 8 \delta)n$.
		\item A quadruple $(G, X, R_1, R_2)$ is called \emph{split $(n, \delta, \gamma)$-Ore} if neither (1) nor (2) holds for any choice of $R, B$, and $R_1, R_2$ are two monochromatic components of $G$ with the same colour as well as $\abs {R_1 \cup R_2} \geq (1 - 6 \delta)n$. 
	\end{enumerate}
\end{definition}

The remainder of this chapter is dedicated to three separate proofs of \cref{lem:structural-three}, one for each of these three cases. That is, we show that the following three statements hold:

\begin{lemma}\label{lem:structural-three-type1}
	Let $1 / n \ll \delta \ll \eta \ll \gamma$ and $(G, X, R)$ be plain $(n, \delta, \gamma)$-Ore. Then there are three monochromatic components of $G$ whose union $H$ contains at least $(1 - \eta)n$ vertices and has no $\eta n$-contracting sets.
\end{lemma}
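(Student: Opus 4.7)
I will take $H := R \cup B_1 \cup B_2$ with WLOG $R$ red and $B_1, B_2$ the two largest blue components of $G$. Since $|R| \geq (1-10\delta)n \geq (1-\eta)n$ (using $\delta \ll \eta$), the coverage part is automatic, so the substance lies in showing $H$ has no $\eta n$-contracting set. Suppose for contradiction that some $S \subseteq V(H)$ is stable in $H$ with $|S| - |N_H(S)| > \eta n$. As $S$ and $N_H(S)$ are disjoint in $V(H)$, one also has $|S| + |N_H(S)| \leq n$ and $|S| > \eta n$, and I will derive a contradiction via the Ore-type condition.

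\textbf{First step: analysis inside $R$.} The key observation is that for any $s \in S \cap R$, every red $G$-edge from $s$ sits in $E(R) \subseteq E(H)$, so by stability its other endpoint lies in $N_H(S)$, yielding $\deg_{G_1}(s) \leq |N_H(S)|$. Split $S \cap R$ into $S^{\mathrm{in}} := S \cap R \cap (B_1 \cup B_2)$ and $S^{\mathrm{out}} := (S \cap R) \setminus (B_1 \cup B_2)$; for $s \in S^{\mathrm{in}} \subseteq R \cap B_i$ the analogous argument for blue edges gives $\deg_G(s) = \deg_H(s) \leq |N_H(S)|$. Any pair $u, v \in S^{\mathrm{in}}$ satisfies $uv \notin E(G)$ by stability and the component structure, so whenever $uv \notin E(X)$, the Ore-type condition forces $|N_H(S)| \geq (2/3 + \gamma/4)n$, immediately contradicting the joint bounds $|S| + |N_H(S)| \leq n$ and $|S| > |N_H(S)| + \eta n$. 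Hence $S^{\mathrm{in}}$ is a clique in $X$, and $\Delta(X) < \delta n$ gives $|S^{\mathrm{in}}| \leq \delta n$, so that $|S^{\mathrm{out}}| \geq |S| - 11\delta n$.

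\textbf{Second step: analysis of $S^{\mathrm{out}}$.} For $s \in S^{\mathrm{out}}$ the blue component $B(s)$ has size at most $|B_2|$, giving $\deg_G(s) \leq |N_H(S)| + |B_2| - 1$. Since every $G$-edge inside $S^{\mathrm{out}}$ is blue and confined to a single small blue component, a short counting argument produces a pair $u, v \in S^{\mathrm{out}}$ non-adjacent in $G \cup X$ whenever $|S^{\mathrm{out}}| > |B_2| + \delta n + 1$; applying the Ore-type condition to such a pair then gives $|N_H(S)| + |B_2| \geq (2/3 + \gamma/4)n$. I will combine this with $|S^{\mathrm{out}}| \leq n - |B_1| - |B_2| \leq n - 2|B_2|$, $|S| + |N_H(S)| \leq n$, and the contracting inequality to pin $|B_2|$, $|N_H(S)|$ and $|S|$ into a narrow regime; in the extreme sub-cases (either $|B_2|$ small or $|S^{\mathrm{out}}|$ comparable to $|B_2|$), a second Ore pair drawn across $S^{\mathrm{out}}$ and $R \setminus (S \cup N_H(S))$---or across $S$ and $V(G) \setminus V(H)$---closes the argument. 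The main technical hurdle is the transitional regime $|B_2| \approx n/3$, where a single Ore application for $S^{\mathrm{out}}$ only narrows the parameters without contradicting them: there one must exploit that $B_1, B_2$ were chosen as the \emph{largest} blue components to rule out a hypothetical third blue component of size close to $n/3$, either by chasing a second Ore inequality or by substituting that $B_3$ for $B_2$ in the definition of $H$ and re-running the first step.
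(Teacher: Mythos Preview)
Your first step is correct and essentially equivalent to the paper's: the paper obtains $S^{\mathrm{in}}=\emptyset$ by picking $u\in S^{\mathrm{in}}$ and applying the ``lost edges $>n/3$'' observation to a second vertex of $S'$, but your clique-in-$X$ argument reaches the same conclusion up to a harmless $\delta n$.

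The second step, however, is genuinely incomplete, and you say so yourself. The Ore pair inside $S^{\mathrm{out}}$ yields $|N_H(S)|+|B_2|\ge(2/3+\gamma/2)n$, which together with $|S^{\mathrm{out}}|\le n-2|B_2|$ and $|S|>|N_H(S)|+\eta n$ only forces $|B_2|<n/3$; in the regime $|B_2|\approx n/3$, $|S|\approx|N_H(S)|\approx n/3$ no contradiction appears. Neither of your suggested fixes works as stated. ``Substituting $B_3$ for $B_2$'' changes $H$, and one would then have to rule out contracting sets in the new graph, where the same configuration can recur with the roles of $B_2$ and $B_3$ swapped. ``Chasing a second Ore inequality'' inside $S^{\mathrm{out}}$ cannot help either, since every vertex of $S^{\mathrm{out}}$ obeys the same degree bound $|N_H(S)|+|B_2|$, so no new information is available from such pairs.

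What your sketch lacks is an Ore pair using a vertex \emph{outside} $S$. The paper does two things you do not. First, via a three-vertex Ore argument it shows that $S':=S\cap R$ is, up to $2\delta n$ vertices, contained in at most two blue components $B_1',B_2'$; since $S'$ is disjoint from $B_1\cup B_2$ these are different from $B_1,B_2$, whence $|S'|\le|B_1'|+|B_2'|+2\delta n\le|B_1|+|B_2|+2\delta n$ and therefore $|N_H(S')|<|B_1|+|B_2|-\delta n$. Second, this guarantees a vertex $v\in(B_1\cup B_2)\setminus(N_H(S')\cup N_X(s_1))$ which has \emph{no} $G$-edge to $S'$ whatsoever: blue edges are blocked by the component structure, and a red edge to $S'\subset R$ would put $v$ in $N_H(S')$. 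Applying the Ore condition to $(s_1,v)$ then gives
\[
\tfrac{4n}{3}<\deg_G(v)+\deg_G(s_1)\le(n-|S'|)+\bigl(|N_H(S')|+|B_1'|\bigr)<n+|B_1'|,
\]
so $|B_1'|>n/3$, forcing $B_1'\in\{B_1,B_2\}$ and contradicting $s_1\in S'\cap B_1'$ with $S'\cap(B_1\cup B_2)=\emptyset$. This cross-pair between $S'$ and $B_1\cup B_2$ is exactly the missing idea.
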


\begin{lemma}\label{lem:structural-three-type2}
	Let $1 / n \ll \delta \ll \eta \ll \gamma$ and $(G, X, R, B)$ be mixed $(n, \delta, \gamma)$-Ore. Then there are three monochromatic components of $G$ whose union $H$ contains at least $(1 - \eta)n$ vertices and has no $\eta n$-contracting sets.
\end{lemma}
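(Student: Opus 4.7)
My plan is to take $R$ and $B$ as two of the three monochromatic components. Since $|R\cup B|\geq (1-8\delta)n\geq (1-\eta)n$, the coverage requirement is already satisfied for any choice of third component $C$, so the real task is to find $C$ such that $H:=R\cup B\cup C$ has no $\eta n$-contracting set.

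A useful first reduction is to bound the contraction inside $R\cup B$ itself. Indeed, if $S\subseteq V(H)$ is stable in $H$, then $S':=S\cap(R\cup B)$ is stable in $R\cup B$, the gap $|S\setminus S'|\leq |V(G)\setminus(R\cup B)|\leq 8\delta n$, and $N_H(S)\supseteq N_{R\cup B}(S')$, which together yield $c_H(S)\leq c_{R\cup B}(S')+8\delta n$. Hence it is enough to exhibit a third component $C$ for which no stable set $S'$ in $R\cup B$ (possibly augmented by the edges of $C$) has $c_{R\cup B}(S')>(\eta-8\delta)n$.

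Assume otherwise and decompose $S'=S'_R\cup S'_B\cup S'_{RB}$ with $S'_R:=S'\cap(R\setminus B)$, $S'_B:=S'\cap(B\setminus R)$, $S'_{RB}:=S'\cap(R\cap B)$. A quick check of which $G$-edges can lie between these three parts shows that $S'$ contains no $G$-edges at all, except possibly blue ones inside $S'_R$ (confined to a common non-$B$ blue component) or red ones inside $S'_B$ (confined to a common non-$R$ red component). When $S'_R$ and $S'_B$ are both small, $S'$ is essentially independent in $G$: applying the Ore-type condition to a pair $u,v\in S'$ non-adjacent in $G\cup X$ (which exists because $\Delta(X)<\delta n$ and $|S'|$ is large) gives $|N_G(\{u,v\})|\geq \deg_G(u)+\deg_G(v)-(n-|S'|)\geq |S'|+(1/3+\gamma-2\delta)n$, contradicting the contraction assumption with large slack.

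The main obstacle is the case where $S'_R$ (or symmetrically $S'_B$) is large, because for $s\in S'_R$ the blue neighbours of $s$ lie in some non-$B$ blue component $B^{\ast}$ and therefore need not contribute to $N_{R\cup B}(S')$. To overcome this I would exploit the mixed-case assumption that every monochromatic component has size at most $(1-10\delta)n$, and pick $C$ to be a non-$B$ blue component meeting $S'_R$ in as many vertices as possible (or the analogous non-$R$ red component for $S'_B$). With such $C$, stability of $S$ in $H$ forces $S'_R\cap C$ to be independent in $C$, which drops those vertices back into the regime handled by the Ore argument above, while the blue neighbours of $S'_R\cap C$ inside $C$ are absorbed into $N_H(S)$. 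The trickiest sub-case is when $S'_R$ is spread across many small non-$B$ blue components with no dominant one; there no single choice of $C$ captures enough blue edges of $S'_R$, and one must instead apply the Ore-type condition directly to pairs $u,v\in S'_R$ lying in different non-$B$ blue components — these are automatically non-adjacent in $G$ — and track that the resulting common neighbours lie in $R\cup B$, which yields the required lower bound on $|N_{R\cup B}(S')|$ and closes the proof.
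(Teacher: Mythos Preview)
Your proposal has a genuine gap: the choice of the third component $C$ cannot depend on the particular contracting set $S$. You correctly set up the reduction (fix $C$, then show no stable $S'\subset R\cup B$ has large contraction in $H=R\cup B\cup C$), but when you reach the main obstacle you write ``pick $C$ to be a non-$B$ blue component meeting $S'_R$ in as many vertices as possible''. Here $S'_R$ is determined by the hypothetical bad set $S$, so you are proving the quantifier-swapped statement ``for every bad $S$ there exists a good $C$'' instead of ``there exists $C$ good for every $S$''. Different bad sets might concentrate in different non-$B$ blue components, and your argument gives no single $C$ that works simultaneously. The closing paragraph about the ``trickiest sub-case'' inherits the same problem and is in any case too vague to count as an argument: you assert that Ore applied to $u,v\in S'_R$ in different blue components yields a bound on $|N_{R\cup B}(S')|$, but the blue neighbours of such $u,v$ may lie entirely outside $R\cup B$, so the bookkeeping is not done.

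The paper avoids this circularity by fixing the third component \emph{extremally}: it takes $R'$ to be the largest monochromatic component other than $R,B$ that meets $R\cup B$, and then argues by contradiction. First it shows $|R\cap B|>n/3$, which forces $S'$ to be disjoint from $R\cap B$ and from $R'$ (any $u\in S'$ lying in both a kept red and blue component would, via the Ore condition, produce a lost component of size $>n/3$, larger than $R'$). A short claim then shows $S'$ must meet both $R\setminus B$ and $B\setminus R$, and picking one vertex from each side yields a lost component $L$ with $|L|>n/6$; a separate counting argument using the same two vertices bounds $|R'|<n/6$. This contradicts the maximality of $R'$. The key idea you are missing is this extremal choice of $C$ together with the ``find a larger lost component'' contradiction scheme.
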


\begin{lemma}\label{lem:structural-three-type3}
	Let $1 / n \ll \delta \ll \eta \ll \gamma$ and $(G, X, R_1, R_2)$ be split $(n, \delta, \gamma)$-Ore. Then there are three monochromatic components of $G$ whose union $H$ contains at least $(1 - \eta)n$ vertices and has no $\eta n$-contracting sets.
\end{lemma}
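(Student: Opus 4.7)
The plan is to take $H := R_1 \cup R_2 \cup B$ for a carefully chosen blue component $B$, and verify that $H$ has no $\eta n$-contracting set. Coverage is automatic since $\abs{R_1 \cup R_2} \geq (1 - 6\delta)n \geq (1 - \eta)n$. The split hypothesis immediately yields $4\delta n \leq \abs{R_i} \leq (1 - 10\delta)n$ for $i \in \{1, 2\}$ and $\abs{V(G) \setminus (R_1 \cup R_2)} \leq 6\delta n$.

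To select $B$, I would leverage the Ore condition. For any $u \in R_1, v \in R_2$ non-adjacent in $G \cup X$, the red neighbours of $u$ and $v$ are confined to $R_1$ and $R_2$ respectively, so together with $\Delta(X) < \delta n$ the blue degrees satisfy
\[
\deg_G(u, \overline{R_1}) + \deg_G(v, \overline{R_2}) \geq (4/3 + \gamma - 2\delta)n - (\abs{R_1} + \abs{R_2}) \geq (1/3 + \gamma/2)n.
\]
Since $\Delta(X)$ is small, this inequality applies to almost all such pairs, and hence most vertices of $R_1 \cup R_2$ belong to a blue component of linear size. I would then take $B$ to maximise $\min(\abs{B \cap R_1}, \abs{B \cap R_2})$, so that $B$ intersects both $R_1$ and $R_2$ in $\Omega(n)$ vertices.

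Now suppose, for contradiction, $S \subseteq V(H)$ is stable in $H$ with $c_H(S) > \eta n$. Decompose $S = S_1 \cup S_2 \cup S_0$ with $S_i = S \cap R_i$ and $S_0 = S \cap (B \setminus (R_1 \cup R_2))$. Stability forces $S_1, S_2$ to be red-independent in $R_1, R_2$ respectively, and $S \cap B$ to be blue-independent in $B$. A convenient non-adjacent pair consists of any $s \in S_1 \cap B$ and any $t \in S_2$: no red edge $st$ exists since $R_1, R_2$ are distinct red components, and any blue edge $st$ would either lie in $H$ (if $t \in B$, contradicting stability of $S$) or force $t$ into $B$'s blue component (impossible if $t \notin B$). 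After discarding at most $\Delta(X) < \delta n$ troublesome vertices, we obtain such $s, t$ non-adjacent also in $G \cup X$. The Ore condition then gives a lower bound on $\deg_G(s) + \deg_G(t)$, while the stability bounds $\deg_H(s), \deg_H(t) < \abs{S} - \eta n$ combined with $\abs{V(G) \setminus H} \leq 6\delta n$ on lost edges supply the matching upper bound, producing the contradiction.

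The main obstacle will be the case analysis, since the non-adjacent pair just described is only available when $S_1 \cap B$ (or symmetrically $S_2 \cap B$) is sufficiently large, together with $S_2$ (resp.\ $S_1$). In regimes where $S$ avoids $B$, or is concentrated in $S_0$, or where $\abs{S_i}$ is close to $\abs{R_i}$, I expect to either switch the non-adjacent pair to one contained in $S \cap B$ (using blue-independence there) or to abandon the configuration entirely — for instance replacing $R_2$ by a second blue component and working with $H = R_1 \cup B \cup B'$ when $\abs{R_2}$ is too small to matter. Navigating these sub-cases, while carefully tracking the contribution of the exceptional edges $E(X)$, is the technical heart of the argument; I would follow the template of the corresponding structural lemma in \cite{ABL+21}, as indicated in \cref{subsec:method}.
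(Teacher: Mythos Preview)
Your proposed ``easy case'' already breaks. You write that the stability bounds $\deg_H(s), \deg_H(t) < \abs{S} - \eta n$ together with $\abs{V(G) \setminus H} \leq 6\delta n$ control the lost edges, but this conflates two different things: an edge $tw$ with both endpoints in $V(H)$ can still fail to lie in $E(H)$. Concretely, if $t \in S_2 \setminus B$ then \emph{all} blue edges of $t$ are lost, and they go to $t$'s blue component $B'$, which may sit entirely inside $R_1 \cup R_2 \subset V(H)$. So $\deg_G(t) - \deg_H(t)$ is bounded only by $\abs{B'}$, not by $6\delta n$, and the intended contradiction with the Ore condition does not materialise. (Your $s \in S_1 \cap B$ genuinely loses no edges, so $\deg_G(s) < n/2$ by \cref{obs:Ore}(2); but then Ore only gives $\deg_G(t) > 5n/6$, forcing $\abs{B'} > n/3$, which is information rather than a contradiction.) This is precisely why committing upfront to a single $H = R_1 \cup R_2 \cup B$ is too rigid: the blue component $B'$ that emerges here may be the one you should have kept.

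The paper's route is structurally different. It first finds \emph{two} blue components $B_1, B_2$ such that $(G, X, B_1, B_2)$ is also split (\cref{lem:blue-comps}), reducing to the \emph{evenly split} setting. It then considers all four candidate unions $H_L$ (dropping one of $R_1, R_2, B_1, B_2$) simultaneously and assumes, for contradiction, that each admits an $\eta n$-contracting set $S_L$. The heart of the argument (\cref{lem:cut-contr-lost,lem:cut-contr-half,lem:cut-contr-diagonal}) pins these four sets into two diagonally opposite cells $R_1 \cap B_j$ and $R_2 \cap B_{3-j}$; \cref{lem:diagonal-intersections-small,lem:diagonal-small-vertices} then show both cells are small and contain many low-degree vertices, yielding two non-adjacent vertices each of degree below $2n/3$ and contradicting Ore. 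Your sketch never sets up this four-way tension, and the vague fallback to ``$H = R_1 \cup B \cup B'$ when $\abs{R_2}$ is too small'' does not capture the actual obstruction, which can occur with $\abs{R_1}, \abs{R_2}, \abs{B_1}, \abs{B_2}$ all close to $n/2$.
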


With these at hand, the proof of \cref{lem:structural-three} becomes trivial.

\begin{proof}[Proof of \cref{lem:structural-three}]
	Choosing one or two monochromatic components of $G$ according to \cref{lem:two-comps-almost-cover}, we can extend $(G, X)$ to a triple or quadruple that is either plain, mixed or split $(n, \delta, \gamma)$-Ore. Thus, we are done by \cref{lem:structural-three-type1,lem:structural-three-type2,lem:structural-three-type3}.
\end{proof}

While the first two cases are quite straightforward to solve (see \cref{subsec:type1,subsec:type2}), the third one will require a more involved argument (see \cref{subsec:comp-structure-type3,subsec:type3}). Before we address any details, we collect a few general observations that hold in all three cases.

\begin{observation}\label{obs:Ore}
	Let $1/n \ll \delta \ll \gamma$ and $(G, X)$ be $(n, \delta, \gamma)$-Ore. If $S$ is a contracting set in the subgraph $H$ of $G$ and $S'$ is a subset of $S$, then all of the following hold:
	\begin{enumerate}
		\item $\abs S \geq c_H(S)$.
		\item $\abs {N_H(S)} < n/2$.
		\item $S'$ is stable in $H$ with $c_H(S') \geq c_H(S) - \abs {S \setminus S'}$.
	\end{enumerate}
	Furthermore, if $u \in S$ does not lose incident edges from $G$ to $H$, then for every vertex $v \in S \setminus (N_X(u) \cup \{ u \})$, both of the following hold:
	\begin{enumerate}[resume]
		\item $\deg_G(v) \geq (4/3 + \gamma)n - \abs {N_H(S)}$.
		\item $\deg_G(v) - \deg_H(v) > n/3$.
	\end{enumerate}
\end{observation}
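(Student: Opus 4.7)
The observation splits naturally into a structural block (items (1)--(3)), which follows from the definitions of stable set and contraction alone, and a degree block (items (4)--(5)), which additionally invokes the Ore-type condition carried by $G \cup X$. My plan is to dispatch each block in turn; overall the result is short, and the main point requiring care is correctly chaining the various hypotheses in (4) to legitimately invoke the Ore condition on $G \cup X$.

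For the structural block, item (1) reduces to the trivial bound $\abs{N_H(S)} \geq 0$ combined with the identity $c_H(S) = \abs{S} - \abs{N_H(S)}$. For (2), stability of $S$ in $H$ forces $N_H(S) \subset V(G) \setminus S$, so $\abs{N_H(S)} \leq n - \abs{S}$; together with the contraction inequality $\abs{S} > \abs{N_H(S)}$ this yields $2\abs{N_H(S)} < n$. For (3), the subset $S'$ is trivially stable, and stability of the larger set $S$ ensures that no vertex of $S \setminus S'$ can be adjacent in $H$ to any vertex of $S' \subset S$, so $N_H(S') \subset N_H(S)$; the claimed inequality on $c_H(S')$ then follows by unfolding $\abs{S'} = \abs{S} - \abs{S \setminus S'}$.

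For the degree block, the crucial preliminary step is to verify that $uv \notin E(G \cup X)$, which is what allows the Ore-type condition to be applied to $(u, v)$. Since $u$ does not lose incident edges from $G$ to $H$, I have $N_G(u) = N_H(u)$, and stability of $S$ additionally gives $N_H(u) \subset N_H(S)$. The choice of $v$ excludes $v = u$ and $v \in N_X(u)$, while stability of $S$ excludes $v \in N_H(u) = N_G(u)$, establishing the required non-edge in $G \cup X$. The $(n,\gamma)$-Ore property then yields $\deg_G(u) + \deg_G(v) \geq (4/3 + \gamma)n$, and subtracting $\deg_G(u) = \deg_H(u) \leq \abs{N_H(S)}$ delivers (4). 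Finally, (5) is immediate: combining (4) with $\deg_H(v) \leq \abs{N_H(S)}$ (again by stability) and with $\abs{N_H(S)} < n/2$ from item (2) gives $\deg_G(v) - \deg_H(v) \geq (4/3 + \gamma)n - 2\abs{N_H(S)} > (1/3 + \gamma)n > n/3$, as required.
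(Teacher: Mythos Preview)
Your proof is correct and follows essentially the same route as the paper's: items (1)--(3) are read off from the definitions of stability and contraction, while (4) and (5) come from verifying $uv \notin E(G \cup X)$, applying the Ore-type condition, and bounding $\deg_G(u) = \deg_H(u)$ and $\deg_H(v)$ by $\abs{N_H(S)}$ together with (2). The paper merely states these steps more tersely, but your expansion matches it line for line.
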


\begin{proof}
	The first three statements follow directly from the definitions of stability and contraction. For (4), observe that $uv \not \in E(G \cup X)$ and $\deg_G(u) = \deg_H(u) \leq \abs {N_H(S)}$ by stability of $S$. The statement then follows from applying the Ore-type condition to $u, v$. Similarly, $\deg_H(v) \leq \abs {N_H(S)}$ holds, so (5) follows directly from (4) and (2).
\end{proof}

\subsection{One monochromatic component}\label{subsec:type1}
We start with plain $2$-edge-colourings, so one monochromatic component $R$ already covers almost all of $G$ on its own. This allows us to prove the corresponding \cref{lem:structural-three-type1} directly.

\begin{proof}[Proof of \cref{lem:structural-three-type1}]
	Without loss of generality, we can assume $R$ to be red, otherwise swap colours. Let $B, B'$ be the two largest blue components of $G$ intersecting $R$. Then $H := R \cup B \cup B'$ still covers at least $(1 - 10\delta)n \geq (1 - \eta)n$ vertices by the choice of constants. It remains to show that $H$ has no stable sets $S$ with $c_H(S) > \eta n$. For a proof by contradiction, fix such a set and consider the still stable set $S' := S \cap R$ with $c_H(S') > (\eta - 10\delta)n \geq 3\delta n$ by \cref{obs:Ore}(3) and the choice of constants.
	
	We first observe that $S'$ is disjoint from $B \cup B'$: Assuming otherwise, there would be a $u \in S'$ that belongs to both a red and a blue component kept from $G$ to $H$ and therefore does not lose incident edges. By \cref{obs:Ore}(1) and our choice of constants, we have $\abs {S'} \geq c_H(S') \geq 3 \delta n$ and so there is $v \in S' \setminus (N_X(u) \cup \{ u \})$. Moreover, $v$ is incident to more than $n/3$ edges lost from $G$ to $H$ by \cref{obs:Ore}(5). As $v \in R$ and $R$ is kept from $G$ to $H$, all of these edges must be blue and belong to the blue component $L$ of $v$ with $\abs L > n/3$. But then $L$ is among the two largest blue components $B, B'$ and thus also a subgraph of $H$, a contradiction.
	
	Next, we want to show that there also exist two blue components $B_1, B_2$ such that $\abs {S' \setminus (B_1 \cup B_2)} \leq 2\delta n$. For this, let $s_1 \in S'$ be arbitrary and denote its blue component as $B_1$. If $\abs {S' \setminus B_1} \leq 2\delta n$, there is nothing to show, so we can assume there is some $s_2 \in S' \setminus (B_1 \cup N_X(s_1))$. Denote its blue component as $B_2$. Again, if $\abs {S' \setminus (B_1 \cup B_2)} \leq 2\delta n$, there is nothing to show, so assume otherwise and choose $s_3 \in S' \setminus (B_1 \cup B_2 \cup N_X(s_1) \cup N_X(s_2))$. Let $B_3$ be its blue component. The vertices $s_1, s_2, s_3 \in S'$ are then pairwise non-adjacent in $X$ and lie in different blue components of $G$, meaning there can be no blue edges between them. By stability of $S' \subset R$, there can also be no red edges, so the vertices are pairwise non-adjacent in $G$, as well. Thus, the Ore-type condition is applicable and adding the three inequalities yields $4n < 2 \sum_{j = 1}^3 \deg_G(s_j)$. Now every incident edge of $s_j \in S' \subset R$ is either kept from $G$ to $H$ and hence an edge to $N_H(S')$, or lost and therefore blue. This shows $\deg_G(s_j) \leq \abs {N_H(S')} + \abs {B_j \setminus N_H(S')}$ and leads to $4n < 6 \abs {N_H(S')} + 2 \sum_{j = 1}^3 \abs {B_j \setminus N_H(S')} \leq 4 \abs {N_H(S')} + 2n$ by the disjointness of $B_1, B_2, B_3$. Reordering yields $\abs {N_H(S')} > n/2$ in contradiction to \cref{obs:Ore}(2). So there must have been two blue components $B_1, B_2$ with $\abs {S' \setminus (B_1 \cup B_2)} \leq 2\delta n$.
	
	Finally, we show for $s_1 \in S' \cap B_1$ that its blue component $B_1$ must be among $B, B'$: Trivially, $\abs {S'} \leq \abs {B_1} + \abs {B_2} + \abs {S' \setminus (B_1 \cup B_2)} \leq \abs{B_1} + \abs{B_2} + 2\delta n$ holds, so by choice of $B, B'$ as the two largest blue components intersecting $R$, we get $\abs {S'} \leq \abs B + \abs {B'} + 2\delta n$. By $c_H(S') > 3\delta n$, this implies that $\abs {N_H(S')} < \abs {S'} - 3\delta n \leq \abs B + \abs {B'} - \delta n$. Hence, there is a $v \in (B \cup B') \setminus (N_H(S') \cup N_X(s_1))$. As $v$ is not in $N_H(S')$ and belongs to a different blue component than all of $S'$, there can be no edge from $v$ to $S'$ in $G$. It follows that $\deg_G(v) \leq n - \abs {S'}$. In particular, $vs_1 \notin E(G \cup X)$ by choice of $v$. Since $G \cup X$ is $(n, \gamma)$-Ore, we find $4n/3 < \deg_G(v) + \deg_G(s_1)$. Recall from above that $\deg_G(s_1) \leq \abs {N_H(S')} + \abs {B_1 \setminus N_H(S')}$, which is smaller than $\abs {S'} + \abs {B_1}$ as $S'$ is contracting. So we can deduce $4n/3 < n + \abs {B_1}$ and obtain $\abs {B_1} > n/3$. But then $B_1$ must be among the two largest blue components $B, B'$ intersecting $R$ and $s_1 \in S' \cap B_1$ contradicts the disjointness of $S'$ from $B \cup B'$ we have shown above. So $H$ cannot have an $\eta n$-contracting set.
\end{proof}

\subsection{Two monochromatic components of different colours}\label{subsec:type2}
In a similar fashion, we can also prove \cref{lem:structural-three-type2}. This lemma deals with the mixed case, that is when there are two monochromatic components $R, B$ of different colours that only together cover almost all of $G$.

\begin{proof}[Proof of \cref{lem:structural-three-type2}]
	We first note that if $R$ and $B$ covered exactly the same set of vertices, then $(G, X, R)$ would be plain $(n, \delta, \gamma)$-Ore, which is not the case by assumption. So there must be some vertex in $V(R \cup B)$ that belongs to only one of $R$ and $B$. In particular, there must be a monochromatic component of $G$ intersecting $V(R \cup B)$ that is neither $R$ nor $B$. Without loss of generality, we can assume the largest such component to be red (otherwise swap colours) and denote it as $R'$. Let $R$ be the red component among $R, B$. The graph $H := R \cup B \cup R'$ then still covers at least $(1 - 8\delta)n \geq (1 - \eta)n$ vertices by the choice of constants. It remains to show that $H$ has no stable sets $S$ with $c_H(S) > \eta n$. For a proof by contradiction, fix such a set and consider the still stable set $S' := S \cap (R \cup B)$ with $c_H(S') > (\eta - 10\delta)n \geq 8\delta n$ by \cref{obs:Ore}(3) and the choice of constants.
	
	Now as $R, B$ both miss more than $10\delta n$ vertices, but together miss at most $8\delta n$ vertices, we observe that $\abs {R \setminus B} > 2\delta n$ and similarly, $\abs {B \setminus R} > 2\delta n$. So we can pick $u \in R \setminus B$ and $v \in B \setminus (R \cup N_X(u))$, which share no monochromatic component. On the one hand, this implies that they cannot be adjacent, so the Ore-type condition yields $\deg_G(u) + \deg_G(v) > (4/3 + \gamma)n$ and thus, $\abs {N_G(u) \cap N_G(v)} > (1/3 + \gamma)n$. On the other hand, their edges to some $w \in N_G(u) \cap N_G(v)$ must have different colours. If $uw$ is red and $vw$ is blue, we automatically get $w \in R \cap B$. If $uw$ is blue and $vw$ is red, then $w \notin R \cup B$, so there can be at most $8\delta n$ such vertices in $N_G(u) \cap N_G(v)$. This shows that $\abs {R \cap B} > (1/3 + \gamma - 8\delta)n$.
	
	Similar to the proof of \cref{lem:structural-three-type1}, we can now observe that $S' \subset R \cup B$ is disjoint from $R \cap B$ and $R'$: Assuming otherwise, there would be some $u \in S'$ that belongs to both a red and a blue component kept from $G$ to $H$ and therefore does not lose incident edges. By \cref{obs:Ore}(1), we can select some $v \in S' \setminus (N_X(u) \cup \{ u \})$, which is incident to at least $n/3$ lost edges by \cref{obs:Ore}(5). As one of its monochromatic components is kept from $G$ to $H$, this implies that all of these lost edges go to the same monochromatic component $L$ with $\abs L > n/3$. But this contradicts the choice of $R'$: Obviously, $R'$ and $L$ are disjoint from $R \cap B$. The intersection $R' \cap L$ can only exist if $L$ is blue and must then lie outside of $R \cup B$, so it can contain at most $8\delta n$ vertices. Hence, we find that $\abs {R'} \leq n - \abs {R \cap B} - \abs L + \abs {R' \cap L} < (1/3 - \gamma + 16\delta)n < n/3 < \abs L$ by the choice of constants, although $R'$ is supposed to be the largest such component. This contradiction proves that $S'$ must indeed be disjoint from $R \cap B$ and $R'$.
	
	The next step is a general observation we will use multiple times in the following arguments. 
	
	\begin{claim}\label{claim:one-in-S}
		Let $u \in R \setminus B$ and $v \in B \setminus R$ such that $uv \notin E(X)$ and $S'$ contains at least one of $u, v$. Then $n/3 < \abs {N_G(u) \cap S'} + \abs {N_G(v) \cap S'}$.
	\end{claim}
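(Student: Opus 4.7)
The plan is to combine the Ore-type condition with a structural analysis of edge colours. First, I observe that $u \in R \setminus B$ and $v \in B \setminus R$ cannot be $G$-adjacent: a red edge $uv$ would force $v \in R$, and a blue edge $uv$ would force $u \in B$, contradicting the choice of $u$ or $v$. Together with $uv \notin E(X)$, this means $uv \notin E(G \cup X)$, so the Ore-type hypothesis on $(G, X)$ gives $\deg_G(u) + \deg_G(v) > (4/3 + \gamma)n$.

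Second, I carry out a case analysis on the colours of edges incident to $u$ and $v$. A red edge from $u$ ends in $R$, and a blue edge from $u$ ends in $u$'s blue component $B(u) \neq B$, so $N_G(u) \cap (B \setminus R) = \emptyset$; by symmetry, $N_G(v) \cap (R \setminus B) = \emptyset$. Applying the same analysis to a common neighbour $w \in N_G(u) \cap N_G(v)$ rules out the two same-colour cases for $(uw, vw)$ and forces $w \in R \cap B$ or $w \in W := V(G) \setminus (R \cup B)$, where $|W| \leq 8\delta n$.

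Third, since $S'$ was already shown to be disjoint from $R \cap B$ and contained in $(R \setminus B) \cup (B \setminus R)$, the common neighbours of $u$ and $v$ miss $S'$. Hence $N_G(u) \cap S'$ and $N_G(v) \cap S'$ are disjoint, contained in $S_R := S' \cap (R \setminus B)$ and $S_B := S' \cap (B \setminus R)$ respectively. Bounding the complements via the structural containments yields $|N_G(u) \setminus S'| \leq |R| - |S_R| + 8\delta n$ and the analogous bound for $v$, so combining with the Ore sum gives
\[
|N_G(u) \cap S'| + |N_G(v) \cap S'| > (4/3 + \gamma)n - |R| - |B| + |S'| - 16\delta n.
\]

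The main obstacle is showing this right-hand side exceeds $n/3$. Substituting the inclusion-exclusion identity $|R| + |B| - n = |R \cap B| - |W|$, the task reduces to a lower bound on $|S'|$ in terms of $|R \cap B|$. I would exploit the stability hypothesis: if $u \in S'$, then the red edges of $u$ (which lie in $H$) end in $N_H(S')$ and its blue edges end in $B(u) \subseteq V \setminus B$, giving $\deg_G(u) \leq |N_H(S')| + n - |B|$. Combining this with the symmetric bound on $v$ (when $v \in S'$), the contracting inequality $|S'| > |N_H(S')| + 8\delta n$, and the mixed-case estimates $|R \cap B| > (1/3 + \gamma - 8\delta)n$ and $|R|, |B| < (1 - 10\delta)n$ established earlier in the proof of \cref{lem:structural-three-type2}, should close the calculation; the asymmetric case in which only one of $u, v$ lies in $S'$ is the technically delicate part, as one loses the symmetric stability bound on the vertex outside $S'$.
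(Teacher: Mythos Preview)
Your setup is correct through the second step: $uv\notin E(G\cup X)$, the Ore bound applies, and common neighbours of $u$ and $v$ lie in $(R\cap B)\cup W$ with $|W|\leq 8\delta n$. But from there you take a detour that does not close. Bounding $|N_G(u)\setminus S'|$ and $|N_G(v)\setminus S'|$ separately leads you to need $|S'|\gtrsim |R\cap B|$, which you have no way to establish; the stability bound you propose for the symmetric case only yields a lower bound on $|N_H(S')|$ in terms of $|R|+|B|$, and this is not strong enough to force $|S'|>|R\cap B|$ when $|R|,|B|$ are both close to $(1-10\delta)n$. You correctly flag the asymmetric case as delicate, but in fact neither case goes through along these lines.

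The missing idea is to exploit your common-neighbour observation more fully. You noted that almost every $w\in N_G(u)\cap N_G(v)$ lies in $R\cap B$; the extra step is that such a $w$ keeps both its edges $uw,vw$ in $H$, so as soon as one of $u,v$ is in $S'$ we get $w\in N_H(S')$. Hence $|N_G(u)\cap N_G(v)|\leq |N_H(S')|+8\delta n<|S'|$ by the contraction bound $c_H(S')>8\delta n$. Now write $\deg_G(u)+\deg_G(v)=|N_G(u)\cup N_G(v)|+|N_G(u)\cap N_G(v)|$ and use the trivial bound $|N_G(u)\cup N_G(v)|\leq |N_G(u)\cap S'|+|N_G(v)\cap S'|+|\overline{S'}|$. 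Adding the two and subtracting $n=|S'|+|\overline{S'}|$ gives the claim in one line, with no case distinction and no need for a lower bound on $|S'|$.
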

	
	\begin{proofclaim}
		We can apply the Ore-type condition to find that 
		\begin{align}\label{eq:4/3<cupcap}
		4n/3 < \deg_G(u) + \deg_G(v) = \abs {N_G(u) \cup N_G(v)} + \abs {N_G(u) \cap N_G(v)}
		\,.
		\end{align}
		Recall from above that apart from at most $8\delta n$ vertices missed by $R \cup B$, all common neighbours $w$ of $u$ and $v$ in $G$ must belong to $R \cap B$, so the connecting edges $uw, vw$ are kept from $G$ to $H$. As at least one of $u, v$ belongs to $S'$, this puts $w$ into $N_H(S')$. So $\abs {N_G(u) \cap N_G(v)} \leq \abs {N_H(S')} + 8\delta n < \abs {S'}$ by $c_H(S') > 8\delta n$. Plugging both this and $\abs {N_G(u) \cup N_G(v)} \leq \abs {N_G(u) \cap S'} + \abs {N_G(v) \cap S'} + \abs {\overline {S'}}$ into inequality (\ref{eq:4/3<cupcap}) and subtracting $n = \abs {S'} + \abs {\overline {S'}}$ yields $n/3 < \abs {N_G(u) \cap S'} + \abs {N_G(v) \cap S'}$.
	\end{proofclaim}
	
	Now it is easy to see that $S'$ must intersect with both $R \setminus B$ and $B \setminus R$: If $S'$ were disjoint from $B \setminus R$, then $S' \subset R \setminus B$ as $S'$ is also disjoint from $R \cap B$. Choosing $u \in S' \cap R \setminus B$ and $v \in B \setminus (R \cup N_X(u))$, we observe $n/3 < \abs {N_G(u) \cap S'} + \abs {N_G(v) \cap S'}$ by \cref{claim:one-in-S}. But as $v \in B \setminus R$ and $S' \subset R \setminus B$, the neighbourhood of $v$ cannot intersect with $S'$ and $n/3 < \abs {N_G(u) \cap S'}$ follows. By stability of $S'$, all these edges incident to $u \in S' \subset R$ must be lost from $G$ to $H$ and therefore be blue. So the blue component $L$ of $u \in R \setminus B$ contradicts the choice of $R'$, exactly as above. Similarly, if $S'$ were disjoint from $R \setminus B$, then $S' \subset B \setminus R$ follows and we can choose $v \in S' \cap B \setminus R$ as well as $u \in R \setminus (B \cup N_X(v))$. Here, the neighbours of $u$ cannot belong to $S'$, so \cref{claim:one-in-S} implies $n/3 < \abs {N_G(v) \cap S'}$ with all these neighbours of $v$ belonging to the red component $L$ of $v \in B \setminus R$. However, as $L$ contains $v \in S'$ and $R'$ is disjoint from $S'$, these two must be different red components and $L$ being larger yields the same contradiction as above. So indeed, both intersections $S' \cap (R \setminus B)$ and $S' \cap (B \setminus R)$ must be non-empty.
	
	According to \cref{obs:Ore}(1), there are more than $2 \delta n$ vertices in $S'$. So we can choose $s_1$ in the smaller and $s_2 \notin N_X(s_1)$ in the larger set of $S' \cap (R \setminus B)$ and $S' \cap (B \setminus R)$. Using $s_1, s_2$ as $u, v$ in \cref{claim:one-in-S}, we obtain $n/3 < \abs {N_G(s_1) \cap S'} + \abs {N_G(s_2) \cap S'}$. So for some $j \in [2]$, we have $n/6 < \abs {N_G(s_j) \cap S'}$. By the stability of $S'$, all these vertices must belong to the lost component $L$ of $s_j$, which thereby contains more than $n/6$ vertices. 
	
	But then this component $L$ is again larger than $R'$. Indeed, we already know that the sets $S'$, $R \cap B$ and $R' \cap B$ are pairwise disjoint. By definition, $S'$ is also disjoint from $N_G(S') \setminus N_H(S')$. The same holds for the other two sets because vertices in $R \cap B$ or $R' \cap B$ do not lose incident edges from $G$ to $H$. This shows that 
	\[
	\abs {R' \cap B} 
	\leq n - \abs {S'} - \abs {R \cap B} - \abs {N_G(S')} + \abs {N_H(S')}
	< 3n/2 - \abs {S'} - \abs {R \cap B} - \abs {N_G(S')}
	\]
	by \cref{obs:Ore}(2). Together with $\abs {R' \cap \overline B} \leq n - \abs {R \cup B} \leq 8\delta n$, we get 
	\begin{align}\label{eq:R'}
	\abs {R'} 
	= \abs {R' \cap B} + \abs {R' \cap \overline B} 
	\leq (3/2 + 8\delta)n - \abs {S'} - \abs {R \cap B} - \abs {N_G(S')}
	\,.
	\end{align}
	The fact that $G \cup X$ is $(n, \gamma)$-Ore now yields $(4/3 + \gamma)n \leq \deg_G(s_1) + \deg_G(s_2)$. The right side $\deg_G(s_1) + \deg_G(s_2)$ can be expressed as the sum of $\abs {N_G(s_1) \cup N_G(s_2)} \leq \abs{S'} + \abs{N_G(S')}$ and $\abs {N_G(s_1) \cap N_G(s_2)}$. Again recall that apart from at most $8\delta n$ vertices outside of $R \cup B$, all vertices in $N_G(s_1) \cap N_G(s_2)$ must belong to $R \cap B$. Taken together, we get
	\begin{align}\label{eq:Ore}
	(4/3 + \gamma)n
	\leq \deg_G(s_1) + \deg_G(s_2)
	\leq \abs {S'} + \abs {N_G(S')} + \abs {R \cap B} + 8\delta n
	\,.
	\end{align}
	Now adding the inequalities (\ref{eq:R'}) and (\ref{eq:Ore}) yields $\abs {R'} \leq (1/6 - \gamma + 16\delta)n$ after simplification, which is less than $n/6$ by the choice of constants. As there is a larger monochromatic component $L$ that intersects $R \cup B$ in $s_j$, this contradicts the choice of $R'$. So $H$ cannot have an $\eta n$-contracting set.
\end{proof}

\subsection{Component structure (continued)}\label{subsec:comp-structure-type3}
To address the third and last case, we make a few intermediate observations about the relationship between the monochromatic components of the underlying $2$-edge-coloured graph and the existence of contracting sets.

\begin{definition}
	Let $G$ be a $2$-edge-coloured graph on $n$ vertices. Then a family $\cH$ of monochromatic components of $G$ is said to \emph{double-cover} $G$ if for at least $2n/3$ vertices of $G$, both of their monochromatic components are in $\cH$.
\end{definition}

\begin{lemma}\label{lem:double-cover}
	Let $1 / n \ll \delta \ll \eta \ll \gamma$ and $(G, X)$ be $(n, \delta, \gamma)$-Ore. Suppose $G$ is $2$-edge-coloured such that a family $\cH = \{ H_j \}_j$ of monochromatic components double-covers $G$. Then the union $H = \bigcup_j H_j$ has no $\eta n$-contracting sets.
\end{lemma}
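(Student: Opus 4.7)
My plan is to argue by contradiction: assume there is an $\eta n$-contracting set $S$ in $H$ and deploy the Ore-type hypothesis against the double-cover property. Let $D$ denote the set of vertices whose both monochromatic components lie in $\cH$; then $|D| \geq 2n/3$, and every $v \in D$ satisfies $N_G(v) = N_H(v)$ since none of its incident edges can be lost from $G$ to $H$. From \cref{obs:Ore}(1)--(2) and the contraction inequality $|S| > |N_H(S)| + \eta n$, I obtain the starting bounds $|S| > \eta n$, $|N_H(S)| < n/2$, and $\deg_H(u) \leq |N_H(S)| < |S| - \eta n$ for every $u \in S$. The argument then splits into two cases governed by the size of $S \cup N_H(S)$.

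In the small case $|S| + |N_H(S)| < 2n/3 - \delta n$, the set $T := D \setminus (S \cup N_H(S))$ has more than $\delta n$ vertices. For each $u \in S$, I pick $v \in T \setminus N_X(u)$ and note that $v \in D$ combined with $v \notin N_H(S) \cup S$ forces $uv \notin E(G) \cup E(X)$. The Ore-type hypothesis applied to $(u,v)$, together with $\deg_G(v) = \deg_H(v) \leq n - |S| - 1$, then yields $\deg_G(u) > (1/3 + \gamma)n + |S|$, so $u$ loses more than $(1/3 + \gamma + \eta)n$ edges from $G$ to $H$. To close this case I examine the monochromatic components of $u$ outside $\cH$, using the key fact that any such component is disjoint from $D$. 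If exactly one of $u$'s red/blue components lies outside $\cH$, then it alone absorbs all lost edges of $u$, forcing it to have more than $n/3$ vertices and contradicting $|\overline D| \leq n/3$. If both lie outside $\cH$, then $u$ is isolated in $H$, hence $\deg_G(u)$ is at most the combined size of those two components minus one, which is at most $|\overline D| - 1 < n/3$, contradicting the Ore-based lower bound on $\deg_G(u)$.

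In the large case $|S| + |N_H(S)| \geq 2n/3 - \delta n$, the inequality $|N_H(S)| < |S| - \eta n$ forces $|S| > n/3 + (\eta - \delta)n/2$, so $|S \cap D| \geq |S| - |\overline D|$ comfortably exceeds $\delta n$. I can then select distinct $u, v \in S \cap D$ with $uv \notin E(X)$. Since $u \in D$ and $S$ is stable in $H$, we have $uv \notin E(G)$, and Ore yields $\deg_G(u) + \deg_G(v) \geq (4/3 + \gamma)n$. But $u, v \in D \cap S$ gives $\deg_G(u) = \deg_H(u)$ and $\deg_G(v) = \deg_H(v)$, each bounded by $|N_H(S)| < n/2$, producing the desired contradiction.

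The main obstacle is the ``doubly-missing'' subcase of the small case, where some $u \in S$ has both its monochromatic components outside $\cH$ and is therefore isolated in $H$. Naively this would allow $\deg_G(u)$ to be very small, but the Ore-based lower bound $\deg_G(u) > (1/3 + \gamma)n + |S|$ still exceeds the permissible $|\overline D| - 1 \leq n/3 - 1$, closing the argument. All other subcases reduce directly to comparing sizes of non-$\cH$ monochromatic components against $|\overline D|$ or comparing $2|N_H(S)|$ against $(4/3 + \gamma)n$.
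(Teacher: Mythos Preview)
Your proof is correct, but it follows a different architecture from the paper's. The paper does not split into cases on $|S|+|N_H(S)|$. Instead it first shows $S\cap D=\emptyset$ in one stroke: if some $u\in S\cap D$ exists, then by \cref{obs:Ore}(5) any $v\in S\setminus(N_X(u)\cup\{u\})$ loses more than $n/3$ edges from $G$ to $H$, all of which must land in $\overline D$, contradicting $|\overline D|\le n/3$. Once $S\subset\overline D$, the paper sets $W:=V(G)\setminus(S\cup N_H(S))$, observes $|S|+|N_H(S)|+|W\setminus D|<2|\overline D|\le 2n/3$, hence $|W\cap D|>n/3$, and then finishes with a \emph{common-neighbourhood} argument: for $u\in S$ and $v\in(W\cap D)\setminus N_X(u)$ the Ore condition gives $|N_G(u)\cap N_G(v)|>n/3$, yet every common neighbour is forced into $N_H(S)\cup(W\setminus D)$, a set of size below $n/3$.

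Your route replaces the common-neighbourhood step by a direct degree/lost-edge count, and replaces the paper's disjointness step by your ``large case'' with two vertices of $S\cap D$. Both are legitimate; the paper's version is a bit more unified (no threshold split), while yours avoids the common-neighbourhood trick and stays closer to raw degree bookkeeping. Two small remarks on your write-up: first, the ``neither component outside $\cH$'' possibility (i.e.\ $u\in D$) is not listed among your subcases, but it is already excluded by your inequality $\deg_G(u)-\deg_H(u)>(1/3+\gamma+\eta)n>0$; it would be cleaner to say so. Second, your ``doubly-missing'' subcase is not really the main obstacle---it is dispatched in one line via $N_G(u)\subset (R_u\cup B_u)\setminus\{u\}\subset\overline D$, and in fact becomes vacuous whenever one insists $S\subset V(H)$.
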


\begin{proof}
	For a proof by contradiction, let $S$ be such a stable set with $c_H(S) > \eta n$. We first show that $S$ must be disjoint from the set $T$ of all vertices for which both monochromatic components are in $\cH$. Assuming otherwise, there is a vertex $u \in S \cap T$ not losing incident edges from $G$ to $H$, so by \cref{obs:Ore}(1) and the choice of constants, there is a $v \in S \setminus (N_X(u) \cup \{ u \})$. The vertex $v$ loses more than $n/3$ incident edges from $G$ to $H$ by \cref{obs:Ore}(5). But these lost edges cannot go to $T$, which contradicts $\abs T \geq 2n/3$. So $S$ and $T$ are indeed disjoint.
	
	Next, we let $W := V(G) \setminus (S \cup N_H(S))$ and partition $V(G)$ into $S$, $N_H(S)$, $W \cap T$ and $W \setminus T$. Since $S$ and $W \setminus T$ are disjoint subsets of $\overline T$ and $S$ is contracting, we get $\abs S + \abs {N_H(S)} + \abs {W \setminus T} < 2 \abs {\overline T} \leq 2n/3$. This implies that the fourth set $W \cap T$ contains at least $n/3 > \delta n$ vertices, so we can select some $u \in S$ and $v \in (W \cap T) \setminus N_X(u)$. As vertices in $T$ do not lose incident edges from $G$ to $H$, every edge from $S$ to $T$ must go to $N_H(S)$. Consequently, there can be no edge between $S$ and $W \cap T$. So the Ore-type condition yields $\abs {N_G(u) \cap N_G(v)} > n/3$. But by the same argument, none of these joint neighbours can belong to $S$ or $W \cap T$. Hence, $\abs {N_G(u) \cap N_G(v)} \leq \abs {N_H(S)} + \abs {W \setminus T} < \abs S + \abs {W \setminus T} \leq \abs {\overline T} \leq n/3$ follows. This contradicts the previous inequality and we conclude that $H$ cannot have an $\eta n$-contracting set.
\end{proof}

This means that if three monochromatic components double-cover $G$ and already contain almost all vertices, we can immediately deduce \cref{lem:structural-three-type3} from \cref{lem:double-cover}. If that is not the case, however, we will show in \cref{lem:blue-comps} that we may assume the following setting.

\begin{definition}\label{defi:Ore-balanced-type3}
	A sextuple $(G, X, R_1, R_2, B_1, B_2)$ is called \emph{evenly split $(n, \delta, \gamma)$-Ore} if both $(G, X, R_1, R_2)$ and $(G, X, B_1, B_2)$ are split $(n, \delta, \gamma)$-Ore, the colour of $R_1, R_2$ is different from the colour of $B_1, B_2$, and no three components among $R_1, R_2, B_1, B_2$ double-cover $G$.
\end{definition}

\begin{lemma}\label{lem:blue-comps}
	Let $1 / n \ll \delta \ll \gamma$ and $(G, X, R_1, R_2)$ be split $(n, \delta, \gamma)$-Ore. Then at least one of the following two statements holds:
	\begin{enumerate}
		\item $G$ is double-covered by three monochromatic components whose union contains at least $(1 - 7\delta)n$ vertices.
		\item There are two monochromatic components $B_1, B_2$ of $G$ such that the sextuple $(G, X, R_1, R_2, B_1, B_2)$ is evenly split $(n, \delta, \gamma)$-Ore.
	\end{enumerate}
\end{lemma}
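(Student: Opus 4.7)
Plan:

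The approach distinguishes cases on whether a pair of blue components exists with union at least $(1-6\delta)n$. Let $B_1, B_2$ denote the two largest blue components. If $|B_1 \cup B_2| \geq (1-6\delta)n$, I would form the sextuple $(G, X, R_1, R_2, B_1, B_2)$: it inherits from the split hypothesis all the conditions required for being evenly split except possibly the last one (that no three of $R_1, R_2, B_1, B_2$ double-cover $G$). If that condition holds, case (2) follows directly; otherwise three of the four components double-cover $G$, and since their union necessarily contains $R_1 \cup R_2$ or $B_1 \cup B_2$, case (1) holds with union $\geq (1-6\delta)n \geq (1-7\delta)n$.

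The harder case is $|B_1 \cup B_2| < (1-6\delta)n$. Call a blue component \emph{spanning} if it intersects both $R_1$ and $R_2$ nontrivially, and \emph{one-sided} otherwise. The key Ore-based estimate, obtained from $\deg(u) + \deg(v) \leq |R_i \cup B_j| + |R_{i'} \cup B_{j'}| - 2$ combined with $|R_i| + |R_{i'}| \leq n$, is: for every non-adjacent pair $u \in R_i \cap B_j, v \in R_{i'} \cap B_{j'}$ with $i \neq i'$ and $j \neq j'$, one has $|R_{i'} \cap B_j| + |R_i \cap B_{j'}| \geq (1/3 + \gamma - O(\delta))n$. Summing this inequality over all pairs of spanning blues forces the number $k_A$ of spanning blues to satisfy $k_A \leq 3/(1 + 3\gamma) < 3$, i.e., at most two. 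Conversely $k_A \geq 2$: the same Ore inequality applied to a non-adjacent pair with $u$ in a one-sided blue inside $R_1$ and $v$ in a one-sided blue inside $R_2$ yields $\deg(u) + \deg(v) \leq n - 2$, contradicting Ore, so all one-sided blues must sit on the same side; combined with $k_A \leq 1$, one of $R_1, R_2$ would then lie inside the unique spanning blue $B^{\mathrm{A}}$, making $|R_{i'} \cup B^{\mathrm{A}}| = n$ and violating the not-mixed condition. With $k_A = 2$ and, WLOG, all one-sided blues inside $R_1$, I would take the triple $\{R_1, B_1, B_2\}$ for the two spanning blues $B_1, B_2$: its union contains $R_1 \cup R_2$ (so at least $(1-7\delta)n$ vertices), and applying the key estimate once more to pairs $(u, v)$ with $u$ in a one-sided blue and $v \in R_2 \cap B_i$ gives $|R_1 \cap B_i| \geq (1/3 + \gamma - O(\delta))n$ for $i = 1, 2$, so the doubly-covered set $R_1 \cap (B_1 \cup B_2)$ has size $\geq (2/3 + 2\gamma - O(\delta))n > 2n/3$, yielding case (1).

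The main obstacle is the structural analysis in the case where no blue pair covers $(1-6\delta)n$: several Ore inequalities must be orchestrated to pin down exactly two spanning blue components and the uniform side of the one-sided blues, all while tracking the $O(\delta n)$ slack introduced by $|R_1 \cup R_2| \geq (1-6\delta)n$ (rather than equality) and by $\Delta(X) < \delta n$.
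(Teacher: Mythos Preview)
Your approach is correct in outline but takes a different route from the paper. Both arguments rest on the same key estimate: for $u \in R_i \cap B_j$, $v \in R_{i'} \cap B_{j'}$ with $i \neq i'$, $j \neq j'$ and $uv \notin E(G \cup X)$, the common neighbourhood $N_G(u) \cap N_G(v)$ lies in $(R_{i'} \cap B_j) \cup (R_i \cap B_{j'})$, so the latter has size at least $(1/3+\gamma)n$. The paper, however, splits cases on whether some $R_k$ has all but $\delta n$ of its vertices inside two blue components. When this fails (Case~1), it constructs three pairs $(u_j,v_j)$ and notes that the three sets $N_G(u_j) \cap N_G(v_j)$ are pairwise disjoint (a vertex in two of them would need red edges into both $R_1$ and $R_2$), summing to more than $n$---a contradiction. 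So Case~1 is vacuous, and Case~2 directly gives either $R_1, R_2 \subset B_1 \cup B_2$ (yielding~(2)) or a double-covering triple (yielding~(1)). Your second case re-derives the same disjoint-neighbourhood contradiction to bound $k_A \leq 2$, but then continues with the spanning/one-sided analysis and a further application of the key estimate to force~(1). This works, but the paper's case split is slicker precisely because it renders the hard case impossible, avoiding the $k_A \in \{0,1,2\}$ subcases entirely.

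Two small slips to watch. First, your bound $\deg(u)+\deg(v) \leq n-2$ for $u,v$ in one-sided blues on opposite sides ignores the up to $6\delta n$ vertices outside $R_1 \cup R_2$; the correct bound is roughly $(1+12\delta)n$, which still violates the Ore condition. Second, every invocation of the key estimate needs the relevant intersection to exceed $\delta n$ so that a pair avoiding $X$ exists; this can be checked (for instance $|R_2 \cap B_i| \leq 2\delta n$ would make $(G,X,R_1,B_{3-i})$ mixed, contradicting split), but each instance needs verification. The paper handles the analogous issue via an explicit six-vertex construction with two subcases.
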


\begin{proof}
	Without loss of generality, we can assume $R_1, R_2$ to be red. We distinguish between two cases. 
	
	Case 1. Assume that $\abs {R_k \setminus (B \cup B')} \geq \delta n$ for every choice of blue components $B, B'$ and $k \in [2]$. We show that this leads to a contradiction.
	
	\begin{claim}\label{blue-comps:first-case}
		There are six vertices $u_1, u_2, u_3 \in R_1$ and $v_1, v_2, v_3 \in R_2$ such that none of the $u_j$'s and none of the $v_j$'s share their blue component, and $u_1v_1, u_2v_2, u_3v_3 \notin E(G \cup X)$.
	\end{claim}
	
	\begin{proofclaim}
		We distinguish two subcases: For the first subcase, assume that there is a blue component $B_3$ that intersects with $R_1$, but not with $R_2$. As no two blue components completely cover $R_1$, we can pick two vertices $u_1, u_2 \in R_1$ that lie in distinct blue components $B_1, B_2$ other than $B_3$. By assumption, $\abs {R_2 \setminus B_1} \geq \delta n$. Hence, we can select $v_1 \in R_2 \setminus (B_1 \cup N_X(u_1))$ with blue component $B(v_1)$. Similarly, we use the assumption $\abs {R_2 \setminus (B_2 \cup B(v_1))} \geq \delta n$ to find $v_2 \in R_2 \setminus (B_2 \cup B(v_1) \cup N_X(u_2))$ with blue component $B(v_2)$. Finally, let $u_3 \in R_1 \cap B_3$ and select $v_3 \in R_2 \setminus (B(v_1) \cup B(v_2) \cup N_X(u_3))$ with blue component $B(v_3)$, using the assumption $\abs {R_2 \setminus (B(v_1) \cup B(v_2))} \geq \delta n$. Note that $B(v_3) \neq B_3$ because we assumed $R_2$ and $B_3$ to be disjoint.
		
		For the second subcase, assume that every blue component intersecting with $R_1$ also intersects with $R_2$. We start with any blue component $B_1$ that intersects $R_1$ in $u_1 \in R_1 \cap B_1$. Let $v_2 \in R_2 \cap B_1$ and use the assumption $\abs {R_1 \setminus B_1} \geq \delta n$ to select $u_2 \in R_1 \setminus (B_1 \cup N_X(v_2))$. Let $B_2$ be its blue component, pick any $v_3 \in R_2 \cap B_2$ and again use the assumption $\abs {R_1 \setminus (B_1 \cup B_2)} \geq \delta n$ to select $u_3 \in R_1 \setminus (B_1 \cup B_2 \cup N_X(v_3))$. Finally, use the assumption $\abs {R_2 \setminus (B_1 \cup B_2)} \geq \delta n$ to select $v_1 \in R_2 \setminus (B_1 \cup B_2 \cup N_X(u_1))$.
	\end{proofclaim}
	
	Now $G \cup X$ is $(n, \gamma)$-Ore, which can be applied to the pairs $u_j, v_j$ returned by \cref{blue-comps:first-case}. The three resulting inequalities add up to $\sum_{j = 1}^3 \abs {N_G(u_j) \cap N_G(v_j)} > n$. Next, we show that the three sets on the left are pairwise disjoint. For this, consider some $w \in N_G(u_j) \cap N_G(v_j) \cap N_G(u_k) \cap N_G(v_k)$ for $j, k \in [3]$. The edges $u_jw$ and $u_kw$ cannot both be blue as $u_j$ and $u_k$ lie in different blue components. The same is true for $v_jw$ and $v_kw$. So $w$ must be adjacent to some $u \in R_1$ and some $v \in R_2$ by red edges, which is obviously false. Hence, the sets on the left-hand side of the inequality above must indeed be pairwise disjoint. This yields the desired contradiction and concludes Case 1.
	
	Case 2. Assume that $\abs {R_k \setminus (B_1 \cup B_2)} < \delta n$ for some $k \in [2]$ and two blue components $B_1, B_2$. 
	
	\begin{claim}\label{blue-comps:second-case}
		If $R_{3-k} \subset B_1 \cup B_2$ does not hold, then $R_{3-k}, B_1, B_2$ satisfy (1).
	\end{claim}
	
	\begin{proofclaim}
		Let $j \in [2]$ be arbitrary and observe that as $(G, X, R_{3-k}, B_{3-j})$ is not mixed $(n, \delta, \gamma)$-Ore, we must have $\abs {R_k \setminus B_{3-j}} \geq 2 \delta n$. This implies $\abs {R_k \cap B_j} \geq \delta n$ as $\abs {R_k \setminus (B_1 \cup B_2)} < \delta n$. Now if $R_{3-k} \not \subset B_1 \cup B_2$, there is a $v \in R_{3-k} \setminus (B_1 \cup B_2)$. We use the observation above to choose $u_j \in (R_k \cap B_j) \setminus N_X(v)$ for $j \in [2]$, which shares neither monochromatic component with $v$. Hence, $u_jv \not \in E(G \cup X)$ and $G \cup X$ being $(n, \gamma)$-Ore yields $\abs {N_G(u_j) \cap N_G(v)} > (1/3 + \gamma)n$. It is not hard to see that most of this intersection belongs to $R_{3-k} \cap B_j$. Indeed, as $u_j$ and $v$ do not share monochromatic components, their edges to some $w \in N_G(u_j) \cap N_G(v)$ must have different colours. If $u_jw$ is red and $vw$ is blue, then $w \in R_k \setminus (B_1 \cup B_2)$, of which there are less than $\delta n$ vertices. So the remaining at least $(1/3 + \gamma - \delta)n > n/3$ vertices $w$ must have a blue edge to $u_j \in B_j$ and a red edge to $v \in R_{3-k}$, putting them into $R_{3-k} \cap B_j$. But then $R_{3-k}, B_1, B_2$ double-cover $G$. Moreover, $R_{3-k}, B_1, B_2$ contain all of $R_1 \cup R_2$ except for the fewer than $\delta n$ vertices in $R_k \setminus (B_1 \cup B_2)$, so in total at least $\abs {R_1 \cup R_2} - \delta n \geq (1 - 7\delta)n$ vertices. Thus, $R_{3-k}, B_1, B_2$ satisfy (1).
	\end{proofclaim}
	
	So if (1) does not hold, we may assume $R_{3-k} \subset B_1 \cup B_2$. But then $\abs {R_{3-k} \setminus (B_1 \cup B_2)} = 0 < \delta n$, so we can also apply \cref{blue-comps:second-case} to find $R_k \subset B_1 \cup B_2$. In total, we observe $R_1 \cup R_2 \subset B_1 \cup B_2$ and $(G, X, B_1, B_2)$ is split $(n, \delta, \gamma)$-Ore, thus proving (2).
\end{proof}

\subsection{Two monochromatic components of the same colour}\label{subsec:type3}
The remainder of the proof now deals with the setting of \cref{defi:Ore-balanced-type3}. Here, both two red components $R_1, R_2$ and two blue components $B_1, B_2$ of $G$ together cover almost all vertices of $G$. This means that for each monochromatic component $L$ among $R_1, R_2, B_1, B_2$, the union $H_L$ of the other three contains enough vertices to satisfy \cref{lem:structural-three-type3}. So its statement can only be wrong if each of these $H_L$'s contains a stable set $S_L$ of sufficient contraction in $H_L$. Our first task will be to locate these sets with \cref{lem:cut-contr-lost,lem:cut-contr-half,lem:cut-contr-diagonal}. We start by showing that accepting negligible losses in contraction, we may assume $S_L$ to belong to the intersection of $L$ with only one component of the other colour. The proof is in two steps (\cref{lem:cut-contr-lost,lem:cut-contr-half}).

\begin{lemma}\label{lem:cut-contr-lost}
	Let $1 / n \ll \delta \ll \eta' \ll \eta \ll \gamma$ and $(G, X, R_1, R_2, B_1, B_2)$ be evenly split $(n, \delta, \gamma)$-Ore. Suppose $H := R_1 \cup B_1 \cup B_2$ has an $\eta n$-contracting set $S$. Then there also is a $(2\eta' n)$-contracting set $S' \subset R_2 \cap (B_1 \cup B_2)$ in $H$.
\end{lemma}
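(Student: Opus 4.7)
The natural candidate for $S'$ is $S' := S \cap R_2 \cap (B_1 \cup B_2)$. Since $S' \subset S$, stability is inherited. By \cref{obs:Ore}(3), we have $c_H(S') \geq c_H(S) - \abs{S \setminus S'} > \eta n - \abs{S \setminus S'}$, so the entire lemma reduces to showing $\abs{S \setminus S'} \leq (\eta - 2\eta')n$. Splitting $S \setminus S' = (S \cap \overline{R_2}) \cup (S \cap R_2 \cap \overline{B_1 \cup B_2})$, the second piece is contained in $V(G) \setminus (B_1 \cup B_2)$ and therefore has size at most $6\delta n$. For the first piece, using $S \subset V(H) = V(R_1 \cup B_1 \cup B_2)$ together with $V(R_1) \cap V(R_2) = \emptyset$, we find that $S \cap \overline{R_2}$ decomposes as $(S \cap R_1) \cup (S \cap (B_1 \cup B_2) \setminus (R_1 \cup R_2))$; the second part again lies in a set of at most $6\delta n$ vertices. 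So everything boils down to bounding $\abs{S \cap R_1}$.

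I plan to split into two cases. If $S \cap R_1 \cap (B_1 \cup B_2) = \emptyset$, then $S \cap R_1 \subset R_1 \setminus (B_1 \cup B_2) \subset V(G) \setminus (B_1 \cup B_2)$, so $\abs{S \cap R_1} \leq 6\delta n$ and $\abs{S \setminus S'} \leq 18\delta n$, which is within budget because $\delta \ll \eta', \eta$. Otherwise, pick $u \in S \cap R_1 \cap (B_1 \cup B_2)$. Both of $u$'s monochromatic components lie in $H$, so $u$ loses no incident edges from $G$ to $H$, and \cref{obs:Ore}(5) applies to every $v \in S \setminus (N_X(u) \cup \{u\})$: each such $v$ must satisfy $\deg_G(v) - \deg_H(v) > n/3$.

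The main step is a short case analysis showing that such a $v$ must belong to $R_2 \cap (B_1 \cup B_2)$. Indeed, (i) if $v \in R_1 \cap (B_1 \cup B_2)$, then $v$ loses no edges; (ii) if $v \in R_1 \setminus (B_1 \cup B_2)$, then its kept edges include every red edge (all going to $R_1 \subset V(H)$), so all lost edges are blue and land in $v$'s blue component, which lies outside $B_1 \cup B_2$, giving at most $6\delta n < n/3$ losses; (iii) if $v \in (B_1 \cup B_2) \setminus (R_1 \cup R_2)$, blue edges are kept and red edges are lost only into the blue component of $v$'s red component, which again lies in $V(G) \setminus (R_1 \cup R_2)$, yielding fewer than $6\delta n < n/3$ losses. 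Only (iv) $v \in R_2 \cap (B_1 \cup B_2)$ survives. This forces $S \setminus (R_2 \cap (B_1 \cup B_2)) \subseteq \{u\} \cup N_X(u)$, which has size at most $1 + \delta n$, so $\abs{S \setminus S'} \leq (\eta - 2\eta')n$ by the hierarchy. The only obstacle, which is mild, is carrying out the bookkeeping in case (iii) correctly, remembering that edges inside $H$ are exactly those lying in one of $R_1$, $B_1$, $B_2$ as subgraphs, so that red edges between vertices of $R_2 \cap (B_1 \cup B_2)$ count as lost even though both endpoints are in $V(H)$.
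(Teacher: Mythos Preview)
Your argument is correct, and in fact cleaner than the paper's. Both proofs share the same opening move: once some $u \in S \cap R_1 \cap (B_1 \cup B_2)$ is found, \cref{obs:Ore}(5) forces every $v \in S \setminus (N_X(u) \cup \{u\})$ to lose more than $n/3$ edges, and your case analysis (i)--(iii) then pins all such $v$ into $R_2 \cap (B_1 \cup B_2)$. From here the two proofs diverge. The paper insists on showing that no such $u$ exists: it restricts to $S' = S \cap (R_1 \cup R_2) \cap (B_1 \cup B_2)$, assumes $u \in S' \cap R_1$, and then pushes further---using \cref{obs:Ore}(4) to get $\deg_G(v) > (5/6+\gamma)n$ for all $v \in S''$, forcing any two of them to have a common neighbour in $R_1$ and hence lie in the same $B_j$, and finally deriving that $\abs{R_2} + \abs{B_j} > (4/3 + 12\delta)n$, contradicting the no-double-cover clause of the evenly split hypothesis. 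You sidestep all of this by simply accepting that $u$ may exist and observing that $S \setminus S' \subset \{u\} \cup N_X(u)$ already has size at most $1 + \delta n$, which is well within the $(\eta - 2\eta')n$ budget. Your proof never touches the double-cover assumption, so it actually establishes the lemma under the weaker hypothesis that $(G,X,R_1,R_2)$ and $(G,X,B_1,B_2)$ are both split $(n,\delta,\gamma)$-Ore.

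One cosmetic point: in case~(iii) the phrase ``into the blue component of $v$'s red component'' is garbled; you mean simply ``into $v$'s red component'', which is disjoint from $R_1 \cup R_2$ and hence has at most $6\delta n$ vertices.
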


\begin{proof}
	Without loss of generality, we can assume $R_1, R_2$ to be red. Define the set $V := V(R_1 \cup R_2) \cap V(B_1 \cup B_2)$, which contains $\abs V \geq (1 - 12\delta)n$ vertices as both $(G, X, R_1, R_2)$ and $(G, X, B_1, B_2)$ are split $(n, \delta, \gamma)$-Ore. Note that $S' := S \cap V$ is still stable with $c_H(S') > (\eta - 12\delta)n \geq 2\eta' n$ by \cref{obs:Ore}(3) and the choice of constants. Our goal is to show that $S'$ is indeed a subset of $R_2$.
	
	For a proof by contradiction, assume that there is some $u \in S' \cap R_1$. Then by \cref{obs:Ore}(5), every vertex in $S'' := S' \setminus (N_X(u) \cup \{ u \})$ loses some incident edges from $G$ to $H$ and must therefore belong to $R_2$, so $S'' \subset R_2$. Furthermore, each such vertex $v \in S''$ has $\deg_G(v) \geq (4/3 + \gamma)n - \abs {N_H(S')} > (5/6 + \gamma)n$ by \cref{obs:Ore}(4) and \ref{obs:Ore}(2). Combining this for any pair of vertices $v_1, v_2 \in S''$, we get $\abs {N_G(v_1) \cap N_G(v_2)} > (2/3 + 2\gamma)n \geq (2/3 + 12\delta)n$ by the choice of constants. Now as $R_2, B_1, B_2$ do not double-cover $G$, we have $\abs {R_2} < (2/3 + 6\delta)n$ and thus $\abs {R_1} > (1/3 - 12\delta)n$, so there is some $w \in N_G(v_1) \cap N_G(v_2) \cap R_1$. The edges of $w$ to $v_1, v_2 \in S'' \subset R_2$ must then be blue. As $v_1, v_2 \in S''$ were chosen arbitrarily, this proves that all of $S''$ belongs to the same blue component $B_j$ with $j \in [2]$.
	
	Being a subset of the $(2\eta'n)$-contracting set $S'$, the set $S''$ is still stable with $c_H(S'') > (2\eta' - 2\delta)n \geq \eta' n$ by \cref{obs:Ore}(3) and the choice of constants. In particular, this implies that $S''$ contains at least one vertex $v$ by \cref{obs:Ore}(1). As $S'' \subset R_2 \cap B_j$, all of $v$'s edges to $R_1$ must be blue and therefore go to $R_1 \cap B_j$. This shows that $\deg_G(v, R_1) + \abs {S''} \leq \abs {R_1 \cap B_j} + \abs {R_2 \cap B_j} \leq \abs {B_j}$. Now \cref{obs:Ore}(4) yields $4n/3 \leq \deg_G(v) + \abs {N_H(S'')} < \abs {\overline {R_1}} + \deg_G(v, R_1) + \abs {S''} - \eta' n$ by $c_H(S'') > \eta' n$. Using $\abs {\overline {R_1}} \leq \abs {R_2} + 6\delta n$ and assuring $\delta \leq \eta / 18$ when choosing the constants, we get $4n/3 < \abs {R_2} + \abs {B_j} - 12\delta n$. But then at least one of $R_2$ and $B_j$ would contain more than $(2/3 + 6\delta)n$ vertices and thus double-cover $G$ together with the two components of the other colour. As this is not the case by assumption, there can be no $u \in S' \cap R_1$ and $S' \subset R_2$ holds as claimed.
\end{proof}

\begin{lemma}\label{lem:cut-contr-half}
	Let $1 / n \ll \delta \ll \gamma$ and $(G, X, R_1, R_2, B_1, B_2)$ be evenly split $(n, \delta, \gamma)$-Ore. Suppose $H := R_1 \cup B_1 \cup B_2$ has a stable set $S \subset R_2 \cap (B_1 \cup B_2)$. Then there is $j \in [2]$ such that $S \cap B_j \subset R_2 \cap B_j$ is a stable set in $H$ with $c_H(S \cap B_j) \geq c_H(S) / 2$.
\end{lemma}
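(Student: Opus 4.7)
The plan is to split $S$ along the two blue components and show, quite directly, that the contractions add up rather than compete. First I set $S_j := S \cap V(B_j)$ for $j \in [2]$. Since $B_1$ and $B_2$ are distinct monochromatic components, their vertex sets are disjoint, and together with the hypothesis $S \subset B_1 \cup B_2$ this gives a disjoint union $S = S_1 \sqcup S_2$. Each $S_j$ is automatically a subset of $R_2 \cap B_j$ and, as a subset of the stable set $S$, is itself stable in $H$ by \cref{obs:Ore}(3).

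The key structural observation is that the two sets $N_H(S_1)$ and $N_H(S_2)$ are disjoint. Indeed, pick any $u \in S_1 \subset V(R_2) \cap V(B_1)$. Since $R_1$ and $R_2$ are distinct red components we have $u \notin V(R_1)$, and since $B_1 \neq B_2$ we have $u \notin V(B_2)$. Hence the only one of the three components $R_1, B_1, B_2$ making up $H$ that contains $u$ is $B_1$, so every edge of $H$ at $u$ lies in $E(B_1)$ and therefore $N_H(u) \subset V(B_1)$. Taking the union over all $u \in S_1$ gives $N_H(S_1) \subset V(B_1)$, and the symmetric argument yields $N_H(S_2) \subset V(B_2)$. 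Since $V(B_1) \cap V(B_2) = \emptyset$, the neighbourhoods are disjoint.

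With this in hand the lemma becomes a one-line computation. By stability of $S$ and of each $S_j$, we have $N_H(S) = \bigcup_{v \in S} N_H(v) = N_H(S_1) \cup N_H(S_2)$ and the last union is disjoint, so
\[
c_H(S) = |S| - |N_H(S)| = \bigl(|S_1| - |N_H(S_1)|\bigr) + \bigl(|S_2| - |N_H(S_2)|\bigr) = c_H(S_1) + c_H(S_2).
\]
Pigeonhole then yields some $j \in [2]$ with $c_H(S \cap B_j) = c_H(S_j) \geq c_H(S)/2$, as desired.

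There is essentially no obstacle here: the only non-trivial ingredient is the observation that $S \subset R_2$ and $R_2 \not\subset V(H)$ together force every $H$-neighbour of a vertex in $S_j$ to lie in $V(B_j)$. This is what makes the contractions of $S_1$ and $S_2$ add up cleanly rather than overlap, and it relies crucially on $H$ containing only the one red component $R_1$ (disjoint from $S$) together with the two blue components hosting $S_1$ and $S_2$.
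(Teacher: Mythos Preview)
Your proof is correct and follows essentially the same approach as the paper: both arguments hinge on the observation that $N_H(S\cap B_1)$ and $N_H(S\cap B_2)$ are disjoint, which forces $c_H(S_1)+c_H(S_2)=c_H(S)$ and yields the conclusion by pigeonhole. Your justification of the disjointness (showing directly that $N_H(S_j)\subset V(B_j)$) is, if anything, a touch cleaner than the paper's contrapositive phrasing.
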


\begin{proof}
	Without loss of generality, we assume $R_1, R_2$ to be red. Note that since $S$ is stable, $N_H(S \cap B_1)$ and $N_H(S \cap B_2)$ partition $N_H(S)$. Indeed, all $s \in S \subset R_2$ lose their incident red edges from $G$ to $H = R_1 \cup B_1 \cup B_2$. So any vertex in $N_H(S \cap B_1) \cap N_H(S \cap B_2)$ would be adjacent to vertices in both $S \cap B_1$ and $S \cap B_2$ by blue edges, which is obviously impossible. This implies that $\sum_{j=1}^2 \abs {N_H(S \cap B_j)} = \abs {N_H(S)} = \sum_{j=1}^2 \abs {S \cap B_j} - c_H(S)$. Then at least one of the still stable sets $S \cap B_j \subset S$ must satisfy $\abs {N_H(S \cap B_j)} \leq \abs {S \cap B_j} - c_H(S)/2$, so $c_H(S \cap B_j) \geq c_H(S)/2$.
\end{proof}

Before we combine the results of \cref{lem:cut-contr-lost,lem:cut-contr-half} to obtain \cref{lem:cut-contr-diagonal}, we note another general observation in \cref{lem:contr-neighbourhood}. It will be used both to obtain additional information on the locations of the contracting sets in \cref{lem:cut-contr-diagonal} as well as multiple times in the remainder of this third case.

\begin{lemma}\label{lem:contr-neighbourhood}
	Let $1 / n \ll \delta \ll \eta \ll \gamma$ and $(G, X, R_1, R_2, B_1, B_2)$ be evenly split $(n, \delta, \gamma)$-Ore. Suppose $H := R_1 \cup B_1 \cup B_2$ has an $\eta n$-contracting set $S \subset R_2 \cap B_j$ with $j \in [2]$. Then at least one of $\abs {(R_1 \cap B_j) \setminus N_H(S)} < \delta n$ and $\abs {R_2 \cap B_j} > n/3$ hold, both of which imply $\abs {R_1 \cap B_j} < \abs {R_2 \cap B_j}$.
\end{lemma}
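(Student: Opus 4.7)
The plan is to branch on the size of $A := (R_1 \cap B_j) \setminus N_H(S)$. If $\abs{A} < \delta n$, the first alternative holds by definition, so I may assume $\abs{A} \geq \delta n$ and aim to establish the second alternative $\abs{R_2 \cap B_j} > n/3$.

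Fix any $u \in S$ and pick $v \in A \setminus N_X(u)$, which exists since $\abs{A} \geq \delta n > \abs{N_X(u)}$, and which is distinct from $u$ because $v \in R_1$ while $u \in R_2$. As $v$ lies in both of its monochromatic components $R_1, B_j \subseteq H$, it loses no incident edge from $G$ to $H$; combined with $v \notin N_H(S)$, this forces $uv \notin E(G)$. Since also $uv \notin E(X)$, the Ore-type condition yields $\deg_G(u) + \deg_G(v) \geq (4/3 + \gamma)n$.

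The heart of the argument is a sharp bound on this degree sum. Write $T := R_1 \cap B_j$, $T' := R_2 \cap B_j$, and $U := B_j \setminus (R_1 \cup R_2)$, so that $B_j = T \sqcup T' \sqcup U$ and $\abs{U} \leq 6\delta n$. The same reasoning as above shows $N_G(u) \cap A = \emptyset$: any edge from $u \in R_2$ to $w \in A \subseteq R_1$ would have to be blue, hence lie in $H$, forcing $w \in N_H(S)$ — contradicting $w \in A$. Partitioning $N_G(u) \subseteq R_2 \cup B_j$ along $R_2 \setminus T'$, $T'$, and $B_j \setminus R_2$, and excluding $A$ and $S$ via stability, yields $\deg_G(u) \leq \abs{R_2} + \abs{T} - \abs{A} + \abs{U} - 1$. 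The analogous computation for $v$, using $N_G(v) \cap S = \emptyset$, gives $\deg_G(v) \leq \abs{R_1} + \abs{T'} - \abs{S} + \abs{U} - 1$. Summing, invoking Ore, and using $\abs{R_1} + \abs{R_2} \leq n$ together with $\abs{U} \leq 6\delta n$ delivers
\[
\abs{T} + \abs{T'} \geq (1/3 + \gamma - 12\delta)n + \abs{A} + \abs{S}.
\]
Combined with $\abs{T} \leq \abs{A} + \abs{N_H(S)} = \abs{A} + \abs{S} - c_H(S)$, this isolates $\abs{T'} \geq (1/3 + \gamma - 12\delta)n + c_H(S) > n/3$, establishing the second alternative.

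It remains to verify that either alternative implies $\abs{R_1 \cap B_j} < \abs{R_2 \cap B_j}$. If $\abs{A} < \delta n$, then $\abs{T} \leq \abs{A} + \abs{N_H(S)} < \delta n + \abs{S} - \eta n < \abs{S} \leq \abs{T'}$ by $\delta \ll \eta$ and $S \subseteq T'$. If instead $\abs{T'} > n/3$, the evenly split hypothesis forbids the triple $\{R_1, R_2, B_j\}$ from double-covering $G$, so $\abs{T} + \abs{T'} = \abs{(R_1 \cup R_2) \cap B_j} < 2n/3$, whence $\abs{T} < n/3 < \abs{T'}$. The main technical hurdle is the sharpness of the degree bounds: a coarse estimate recovers only $\abs{B_j \setminus S} > n/3$, and isolating $\abs{T'}$ itself requires carefully tracking both exclusions $N_G(u) \cap A = \emptyset$ and $N_G(v) \cap S = \emptyset$.
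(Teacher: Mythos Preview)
Your proof is correct and follows essentially the same route as the paper's: assume $\abs{A} \geq \delta n$, pick a non-adjacent pair from $S$ and $A$, apply the Ore-type condition, and extract $\abs{R_2 \cap B_j} > n/3$; the final ``both imply'' argument is identical. The only cosmetic difference is in how the degree bounds are organised: the paper bounds $\deg_G$ of the $A$-vertex by $n - \abs{S} - \abs{R_2 \cap B_{3-j}}$ and then uses the identity $\deg_{R_2}(v) = \deg_G(v) - \deg_H(v)$ for the $S$-vertex to read off $\abs{R_2}$ directly, whereas you bound both degrees symmetrically via $N_G(u) \cap A = \emptyset$ and $N_G(v) \cap S = \emptyset$ and then combine; both computations land on the same conclusion.
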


\begin{proof}
	Without loss of generality, we can assume $j = 1$, otherwise exchange the labels of $B_1, B_2$. Assume that $\abs {(R_1 \cap B_1) \setminus N_H(S)} \geq \delta n$. So picking any $v \in S$, there must be some $u \in (R_1 \cap B_1) \setminus (N_H(S) \cup N_X(v))$. Then $u$ has no edge to $S$ or $R_2 \cap B_2$ in $H$ and furthermore does not lose incident edges from $G$ to $H$, so $\deg_G(u) = \deg_H(u) \leq n - \abs S - \abs {R_2 \cap B_2}$. In particular, the Ore-type condition is applicable to $u, v$ and yields $\deg_G(v) > 4n/3 - \deg_G(u) \geq n/3 + \abs S + \abs {R_2 \cap B_2}$. However, we also know that $\deg_H(v) \leq \abs {N_H(S)} < \abs S - \eta n$, so
	\[
	\abs {R_2} \geq \deg_{R_2}(v) = \deg_G(v) - \deg_H(v) > (1/3 + \eta)n + \abs {R_2 \cap B_2}
	\,.
	\]
	As at most $6\delta n$ vertices do not belong to $B_1 \cup B_2$, this immediately implies the desired $\abs {R_2 \cap B_1} \geq \abs {R_2} - \abs {R_2 \cap B_2} - 6\delta n > n/3$ by the choice of constants.
	
	The second statement is an easy observation: If $\abs {(R_1 \cap B_1) \setminus N_H(S)} < \delta n$ holds, then $\abs {R_1 \cap B_1} < \abs {N_H(S)} + \delta n < \abs S - (\eta - \delta)n < \abs {R_2 \cap B_1}$ by the $\eta n$-contracting property of $S \subset R_2 \cap B_1$ and the choice of constants. On the other hand, $\abs {R_2 \cap B_1} > n/3$ and $\abs {R_1 \cap B_1} \geq \abs {R_2 \cap B_1}$ would immediately combine to $R_1, R_2, B_1$ double-covering $G$, which is not the case by assumption.
\end{proof}

\begin{lemma}\label{lem:cut-contr-diagonal}
	Let $1 / n \ll \delta \ll \eta' \ll \eta \ll \gamma$ and $(G, X, R_1, R_2, B_1, B_2)$ be evenly split $(n, \delta, \gamma)$-Ore. For each choice of $L \in \{ R_1, R_2, B_1, B_2 \}$, denote the union of the other three as $H_L$. Suppose all of these $H_L$'s have an $\eta n$-contracting set $S_L$. Then for each $L$, there also is an $\eta' n$-contracting set $S_L'$ in $H_L$ such that $S_{R_1}', S_{B_j}' \subset R_1 \cap B_j$ and $S_{R_2}', S_{B_{3-j}}' \subset R_2 \cap B_{3-j}$ for some $j \in [2]$.
\end{lemma}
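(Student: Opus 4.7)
The plan is to apply the three preceding lemmas symmetrically in each of the four settings indexed by $L \in \{R_1, R_2, B_1, B_2\}$ and then to combine the resulting inequalities between the cell sizes $a_{ij} := \abs{R_i \cap B_j}$ to force a diagonal configuration.

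First, I would feed each $\eta n$-contracting set $S_L$ through \cref{lem:cut-contr-lost}, applied with the obvious relabelling that swaps $R_1 \leftrightarrow R_2$ or the two colours as needed. This concentrates every $S_L$ into a $(2\eta' n)$-contracting subset of the ``crossing'' region of $H_L$, namely inside $R_i \cap (B_1 \cup B_2)$ if $L = R_i$, and inside $B_j \cap (R_1 \cup R_2)$ if $L = B_j$. Next I would run each of these through \cref{lem:cut-contr-half}, halving contraction to obtain, for each $L$, an $\eta' n$-contracting set $S_L'$ localised in a single cell. This produces indices $j_1, j_2, k_1, k_2 \in [2]$ with $S_{R_i}' \subset R_i \cap B_{j_i}$ for $i \in [2]$ and $S_{B_j}' \subset R_{k_j} \cap B_j$ for $j \in [2]$.

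Then I would invoke \cref{lem:contr-neighbourhood} four times, once per cell. Each application turns a contracting set in a cell into a strict inequality between the two cell sizes in the same column (for the $R$-type sets) or the same row (for the $B$-type sets), yielding $a_{3-i, j_i} < a_{i, j_i}$ for $i \in [2]$ and $a_{k_j, 3-j} < a_{k_j, j}$ for $j \in [2]$.

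The crux is a short combinatorial argument on these four inequalities. If $j_1 = j_2$, the two column-inequalities flatly contradict each other, forcing $\{j_1, j_2\} = [2]$; identically $\{k_1, k_2\} = [2]$. After relabelling so that $j_1 = 1$ and $j_2 = 2$, the ``wrong'' pairing $k_1 = 2, k_2 = 1$ would produce the cyclic chain $a_{11} > a_{21} > a_{22} > a_{12} > a_{11}$, which is absurd. Hence $k_1 = 1, k_2 = 2$, and setting $j := j_1$ gives exactly the diagonal $S_{R_1}', S_{B_j}' \subset R_1 \cap B_j$ and $S_{R_2}', S_{B_{3-j}}' \subset R_2 \cap B_{3-j}$ required by the statement. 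The only real obstacle I anticipate is keeping the symmetric applications of \cref{lem:cut-contr-lost,lem:cut-contr-half,lem:contr-neighbourhood} bookkeeping straight, since each is stated under the specific hypothesis ``$H = R_1 \cup B_1 \cup B_2$'' and must be invoked under the correct swap of labels; but since the evenly split $(n, \delta, \gamma)$-Ore hypothesis is visibly symmetric in $R_1 \leftrightarrow R_2$ and in the two colours, every relabelling preserves all assumptions automatically, and the constants chain $\delta \ll \eta' \ll \eta \ll \gamma$ leaves enough room for both the halving in \cref{lem:cut-contr-half} and the downstream applications of \cref{lem:contr-neighbourhood}.
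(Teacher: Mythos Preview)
Your proposal is correct and follows essentially the same route as the paper: apply \cref{lem:cut-contr-lost} and \cref{lem:cut-contr-half} to localise each $S_L$ to a single cell, invoke \cref{lem:contr-neighbourhood} to extract the four strict inequalities between cell sizes, and then rule out the non-diagonal configurations by deriving a cyclic chain of inequalities. The paper's presentation differs only cosmetically, fixing $j := j_1$ from the outset and deducing $j_2 = 3-j$ and $k=1$ step by step rather than via your $\{j_1,j_2\}=[2]$, $\{k_1,k_2\}=[2]$ formulation.
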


\begin{proof}
	For each lost component $L \in \{ R_1, R_2, B_1, B_2 \}$, \cref{lem:cut-contr-lost} guarantees the existence of a $(2\eta' n)$-contracting set $S_L'$ in $H_L$ that only intersects with $L$ and the two components $C_1, C_2$ of the other colour. By \cref{lem:cut-contr-half}, we may additionally assume $S_L'$ to only intersect with one of the $C_j$'s, while at worst cutting its contraction in half to $c_{H_L}(S_L') > \eta' n$. Now \cref{lem:contr-neighbourhood} applied with $\eta'$ and $S_L'$ playing the roles of $\eta$ and $S$ implies that $L \cap C_j$ contains more vertices than $K \cap C_j$, with $K$ being the other component among $R_1, R_2, B_1, B_2$ that has the same colour as $L$ and is therefore kept from $G$ to $H_L$. Using this multiple times yields the desired locations of the contracting sets.
	
	Firstly, there is some $j \in [2]$ such that $S_{R_1}' \subset R_1 \cap B_j$, which implies \begin{align} \label{eq:first}
	\abs {R_2 \cap B_j} < \abs {R_1 \cap B_j}
	\,.
	\end{align}
	Now $S_{R_2}' \subset R_2 \cap B_j$ would yield the contradiction $\abs {R_1 \cap B_j} < \abs {R_2 \cap B_j}$, so we must have $S_{R_2}' \subset R_2 \cap B_{3-j}$ and 
	\begin{align} \label{eq:third}
	\abs {R_1 \cap B_{3-j}} < \abs {R_2 \cap B_{3-j}}
	\end{align}
	holds. Similarly, there is some $k \in [2]$ such that $S_{B_j}' \subset R_k \cap B_j$, which implies 
	\begin{align} \label{eq:fourth}
	\abs {R_k \cap B_{3-j}} < \abs {R_k \cap B_j}
	\end{align}
	and thereby enforces $S_{B_{3-j}}' \subset R_{3-k} \cap B_{3-j}$ as well as 
	\begin{align} \label{eq:second}
	\abs {R_{3-k} \cap B_j} < \abs {R_{3-k} \cap B_{3-j}}
	\,.
	\end{align}
	Assuming $k = 2$ and combining these four inequalities then leads to the contradiction
	\[
	\abs {R_2 \cap B_j} 
	\overset{(\ref{eq:first})}< \abs {R_1 \cap B_j} 
	\overset{(\ref{eq:second})}< \abs {R_1 \cap B_{3-j}} 
	\overset{(\ref{eq:third})}< \abs {R_2 \cap B_{3-j}} 
	\overset{(\ref{eq:fourth})}< \abs {R_2 \cap B_j}
	\,.
	\]
	So we must have $k = 1$ as claimed.
\end{proof}

The remainder is a two-step argument about these diagonal intersections $R_1 \cap B_j$ and $R_2 \cap B_{3-j}$ that contain two contracting sets each. We first bound the number of vertices in these intersections from above and then find vertices of small degree in them, which will later contradict the Ore-type condition.

\begin{lemma} \label{lem:diagonal-intersections-small}
	Let $1 / n \ll \delta \ll \eta \ll \gamma$ and $(G, X, R_1, R_2, B_1, B_2)$ be evenly split $(n, \delta, \gamma)$-Ore. For each choice of $L \in \{ R_1, R_2, B_1, B_2 \}$, denote the union of the other three as $H_L$. Suppose all of these $H_L$'s have an $\eta n$-contracting set $S_L$ such that $S_{R_1}, S_{B_j} \subset R_1 \cap B_j$ and $S_{R_2}, S_{B_{3-j}} \subset R_2 \cap B_{3-j}$ for some $j \in [2]$. Then both $\abs {R_1 \cap B_j} < n/3$ and $\abs {R_2 \cap B_{3-j}} < n/3$ hold.
\end{lemma}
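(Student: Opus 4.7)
My plan is by contradiction: I will suppose without loss of generality that $a := |R_1 \cap B_j| \geq n/3$, writing also $b = |R_2 \cap B_{3-j}|$, $a' = |R_1 \cap B_{3-j}|$ and $b' = |R_2 \cap B_j|$. The goal is to derive a contradiction via a common-neighbour count. The pivotal observation is that any $u \in R_1 \cap B_j$ and $v \in R_2 \cap B_{3-j}$ lie in different red and different blue components, so $uv \notin E(G)$. Since $|S_{R_1}|, |S_{R_2}| > \eta n > \delta n$, I can pick $u \in S_{R_1}$ and $v \in S_{R_2}$ with $v \notin N_X(u)$, so that $uv \notin E(G \cup X)$, and the Ore-type condition yields
\[
|N_G(u) \cap N_G(v)| \geq \deg_G(u) + \deg_G(v) - (n-2) > (1/3 + \gamma)n.
\]
Any common neighbour $w$ of $u, v$ must be joined to them by edges of \emph{different} colours (otherwise $u$ or $v$ would share a monochromatic component with the other via $w$), so $w \in (R_1 \cap B_{3-j}) \cup (R_2 \cap B_j)$; thus $a' + b' > (1/3 + \gamma)n$. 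The four blocks $R_i \cap B_k$ are pairwise disjoint in $V(G)$, so $a + b + a' + b' \leq n$, whence
\[
a + b \leq n - (a' + b') < (2/3 - \gamma)n.
\]
Combined with the standing hypothesis $a \geq n/3$, this immediately forces $b < (1/3 - \gamma)n < n/3$.

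To close the argument, I still need to rule out the asymmetric scenario $a \geq n/3 > b$. My plan is to invoke \cref{lem:contr-neighbourhood} in the small-$b$ regime: since $|R_2 \cap B_{3-j}| < n/3$, its second alternative fails for $S_{R_2}$ in $H_{R_2}$, forcing $|(R_1 \cap B_{3-j}) \setminus N_{H_{R_2}}(S_{R_2})| < \delta n$; the analogous bound follows for $S_{B_{3-j}}$. Hence almost all of $R_1 \cap B_{3-j}$ is blue-adjacent to $S_{R_2}$ and almost all of $R_2 \cap B_j$ is red-adjacent to $S_{B_{3-j}}$. Combining this rigid cross-edge structure with the $\eta n$-contracting degree bounds $\deg_{B_j}(u) < a - \eta n$ for $u \in S_{R_1}$ and $\deg_{R_1}(v) < a - \eta n$ for $v \in S_{B_j}$, one should be able to exhibit a non-adjacent pair in $G \cup X$ whose degree sum is strictly below $(4/3 + \gamma)n$, contradicting the Ore-type hypothesis.

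The main obstacle is precisely this last step. Since $S_{R_1}$ and $S_{B_j}$ need not share many vertices inside the large diagonal block $R_1 \cap B_j$ of size $a \geq n/3$, one cannot immediately produce a single vertex of $R_1 \cap B_j$ with both small red and small blue degree. The technical work will therefore consist in assembling the two contracting sets through a carefully chosen pair of non-adjacent vertices, one drawn from $S_{R_1} \cup S_{B_j}$ and the other from $S_{R_2} \cup S_{B_{3-j}}$, potentially routed through a common neighbour in the anti-diagonal block already pinned down by \cref{lem:contr-neighbourhood}, so that the Ore sum can be pushed below $(4/3 + \gamma)n$.
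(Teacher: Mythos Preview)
Your first half is fine and matches the paper: the common-neighbour count for a pair $u\in R_1\cap B_j$, $v\in R_2\cap B_{3-j}$ with $uv\notin E(G\cup X)$ forces $a'+b'>(1/3+\gamma)n$ and hence $a+b<(2/3-\gamma)n$, so at least one diagonal block is below $n/3$.

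The gap is in how you close the asymmetric case $a\ge n/3>b$. You correctly apply \cref{lem:contr-neighbourhood} to $S_{R_2}$ and $S_{B_{3-j}}$ in the small block, but then you abandon these bounds and try to engineer a low-degree-sum pair using $S_{R_1},S_{B_j}$ in the \emph{large} block. As you note yourself, in a block of size $\ge n/3$ there is no reason for $S_{R_1}$ and $S_{B_j}$ to overlap, and the ``rigid cross-edge structure'' you describe does not obviously give you two non-adjacent vertices with combined degree below $(4/3+\gamma)n$.

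The paper's resolution is to stay in the \emph{small} block. Your application of \cref{lem:contr-neighbourhood} actually yields $a'<|S_{R_2}|-(\eta-\delta)n$ and $b'<|S_{B_{3-j}}|-(\eta-\delta)n$. Adding these and combining with $a'+b'>(1/3+\gamma)n>b$ gives
\[
|S_{R_2}|+|S_{B_{3-j}}|>b+2\delta n=|R_2\cap B_{3-j}|+2\delta n,
\]
so by inclusion--exclusion $|S_{R_2}\cap S_{B_{3-j}}|>2\delta n$. Now pick $u_1,u_2$ in this intersection with $u_2\notin N_X(u_1)$; stability of $S_{R_2}$ in $H_{R_2}$ forbids a blue edge $u_1u_2$, and stability of $S_{B_{3-j}}$ in $H_{B_{3-j}}$ forbids a red one, so $u_1u_2\notin E(G\cup X)$. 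The Ore condition then gives some $u_k$ with $\deg_G(u_k)>2n/3$, and since $u_k$ has no neighbours in $R_1\cap B_j$ this forces $a<n/3$, the desired contradiction. So the missing idea is simply: the overlap you cannot establish in the large block is guaranteed in the small one.
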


\begin{proof}
	Without loss of generality, we can assume $R_1, R_2$ to be red and $j = 1$. We first observe that vertices of $R_1 \cap B_1$ cannot be adjacent to vertices of $R_2 \cap B_2$ in $G$. However, both intersections contain sets of contraction above $\eta n$, so more than $\delta n$ vertices by \cref{obs:Ore}(1) and the choice of constants. Thus picking $u \in R_1 \cap B_1$ and $v \in (R_2 \cap B_2) \setminus N_X(u)$, we can use that $G \cup X$ is $(n, \gamma)$-Ore to find that at least one of these intersections has a vertex of degree greater than $2n/3$ in $G$. But then the other intersection must contain less than $n/3$ vertices. Again without loss of generality, we can assume $\abs {R_1 \cap B_1} < n/3$, otherwise exchange the labels of all four components. 
	
	In particular, this assumption excludes the possibility of $\abs {B_1 \cap R_1} > n/3$ when applying \cref{lem:contr-neighbourhood} to $S_{B_1} \subset B_1 \cap R_1$. So $\abs {B_2 \cap R_1} < \abs {N_{H_{B_1}}(S_{B_1})} + \delta n < \abs {S_{B_1}} - (\eta - \delta)n < \abs {S_{B_1}} - \delta n$ must hold by the choice of constants. Similarly, $S_{R_1} \subset R_1 \cap B_1$ guarantees $\abs {R_2 \cap B_1} < \abs {S_{R_1}} - \delta n$. Moreover, the Ore-type condition implies that $n/3 \leq \abs {N_G(u) \cap N_G(v)}$. So as each common neighbour of $u$ and $v$ in $G$ must either belong to $B_2 \cap R_1$ or $R_2 \cap B_1$, we get
	\begin{align*}
	\abs {R_1 \cap B_1} 
	< n/3 
	&\leq \abs {N_G(u) \cap N_G(v)} \\
	&\leq \abs {B_2 \cap R_1} + \abs {R_2 \cap B_1} \\
	&< \abs {S_{B_1}} + \abs {S_{R_1}} - 2\delta n
	\,.
	\end{align*}
	
	Now recall that $S_{R_1}, S_{B_1} \subset R_1 \cap B_1$, so these two must intersect in at least $2\delta n$ vertices. This allows us to pick $u_1 \in S_{R_1} \cap S_{B_1}$ and $u_2 \in (S_{R_1} \cap S_{B_1}) \setminus N_X(u_1)$. By stability of $S_{R_1}$ in $H_{R_1}$, the edge $u_1u_2$ cannot be blue and similarly by stability of $S_{B_1}$ in $H_{B_1}$, it also cannot be red. So $u_1u_2 \not \in E(G \cup X)$ and as $G \cup X$ is $(n, \gamma)$-Ore, at least one of the $u_k \in R_1 \cap B_1$ with $k \in [2]$ has degree $\deg_G(u_k) > 2n/3$. But then $\abs {R_2 \cap B_2} < n/3$ follows by the initial argument.
\end{proof}

\begin{lemma}\label{lem:diagonal-small-vertices}
	Let $1 / n \ll \delta \ll \eta \ll \gamma$ and $(G, X, R_1, R_2, B_1, B_2)$ be evenly split $(n, \delta, \gamma)$-Ore. For each choice of $L \in \{ R_1, R_2, B_1, B_2 \}$, denote the union of the other three as $H_L$. Suppose there is $k, j \in [2]$ such that $R_k \cap B_j$ contains an $\eta n$-contracting set $S_{R_k}$ in $H_{R_k}$ and an $\eta n$-contracting set $S_{B_j}$ in $H_{B_j}$, but satisfies $\abs {R_k \cap B_j} < n/3$. Then there are at least $\delta n$ vertices $v \in R_k \cap B_j$ with $\deg_G(v) < 2n/3$.
\end{lemma}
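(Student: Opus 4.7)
The plan is to produce the required $\delta n$ low-degree vertices by a case analysis on the total off-diagonal size $\sigma := |R_{3-k} \cap B_j| + |R_k \cap B_{3-j}|$. The starting point is the observation that every $v \in S_{R_k} \cap S_{B_j}$ already has $\deg_G(v) < 2n/3$: since $v$ lies in $R_k \cap B_j$, its $H_{R_k}$-edges are precisely its blue edges (red edges leave $R_k \not \subset H_{R_k}$, while blue edges stay inside $B_j \subset H_{R_k}$), so blue-stability together with the $\eta n$-contraction of $S_{R_k}$ give $\deg_{B_j}(v) \leq |N_{H_{R_k}}(S_{R_k})| < |S_{R_k}| - \eta n$, and symmetrically $\deg_{R_k}(v) < |S_{B_j}| - \eta n$. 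Using $|S_{R_k}|, |S_{B_j}| \leq |R_k \cap B_j| < n/3$ yields $\deg_G(v) < 2n/3 - 2\eta n$.

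Next, I will invoke \cref{lem:contr-neighbourhood} to control the off-diagonals. Applied to $S_{R_k}$, its second alternative is ruled out by the hypothesis $|R_k \cap B_j| < n/3$, so $|R_{3-k} \cap B_j| \leq |N_{H_{R_k}}(S_{R_k})| + \delta n < |S_{R_k}| - \eta n + \delta n$; the colour-symmetric version applied to $S_{B_j}$ gives $|R_k \cap B_{3-j}| < |S_{B_j}| - \eta n + \delta n$. In Case~A, when $\sigma \geq n/3 - 13\delta n$, these rearrange to $|S_{R_k}| + |S_{B_j}| > \sigma + 2\eta n - 2\delta n \geq n/3 + 2\eta n - 15\delta n$. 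Since both $S_{R_k}, S_{B_j} \subset R_k \cap B_j$, we have $|S_{R_k} \cup S_{B_j}| \leq |R_k \cap B_j| < n/3$, so inclusion--exclusion yields $|S_{R_k} \cap S_{B_j}| > 2\eta n - 15\delta n \geq \delta n$ for $\delta \ll \eta$, and the first-step degree bound supplies the required vertices.

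In Case~B, when $\sigma < n/3 - 13\delta n$, I will use that the evenly split hypothesis forces $|R_k \setminus (B_1 \cup B_2)|, |B_j \setminus (R_1 \cup R_2)| \leq 6\delta n$ (since $|R_1 \cup R_2|, |B_1 \cup B_2| \geq (1-6\delta)n$) to bound $|R_k \cup B_j| \leq |R_k \cap B_j| + \sigma + 12\delta n < n/3 + (n/3 - 13\delta n) + 12\delta n = 2n/3 - \delta n$. Since every neighbour of a vertex $v \in R_k \cap B_j$ lies in $R_k \cup B_j \setminus \{v\}$, it follows that $\deg_G(v) \leq |R_k \cup B_j| - 1 < 2n/3$ for \emph{every} $v \in R_k \cap B_j$, and $|R_k \cap B_j| \geq |S_{R_k}| > \eta n \geq \delta n$ finishes this case. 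The crux is recognising that \cref{lem:contr-neighbourhood} converts contracting-set data into off-diagonal size bounds which precisely complement the crude upper bound on $|R_k \cup B_j|$ available when $\sigma$ is small, making the two cases meet at the threshold $n/3 - 13\delta n$.
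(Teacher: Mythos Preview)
Your proof is correct, but it follows a genuinely different route from the paper's. The paper does not split on the off-diagonal total $\sigma$; instead it introduces two threshold sets
\[
T_B := \{ v \in R_k \cap B_j : v \text{ has more than } |(R_k \cap B_j)\setminus S_{R_k}| \text{ blue edges to } R_k \cap B_j\},
\]
and the analogous $T_R$ with red and $S_{B_j}$, shows (using \cref{lem:contr-neighbourhood} and $|R_k\cap B_j|<n/3$) that every vertex of $R_k\cap B_j$ outside $T_B\cup T_R$ has degree below $2n/3$, and then argues by contradiction: if fewer than $\delta n$ low-degree vertices exist, the pigeonhole-type definitions force $T_B\subset N_{H_{R_k}}(S_{R_k})$, $T_R\subset N_{H_{B_j}}(S_{B_j})$, and hence $|S_{R_k}|<|S_{B_j}|$ and $|S_{B_j}|<|S_{R_k}|$.

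Your argument is arguably more transparent: you identify $S_{R_k}\cap S_{B_j}$ as an explicit source of low-degree vertices, and your Case~B is a clean global observation (if the off-diagonals are small then $R_k\cup B_j$ is small and \emph{all} of $R_k\cap B_j$ works) that entirely sidesteps the contracting-set machinery in that regime. The paper's approach, on the other hand, avoids a case split and treats the problem uniformly via the $T_B,T_R$ device. Both proofs ultimately lean on \cref{lem:contr-neighbourhood} in the same way to bound the off-diagonals.
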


\begin{proof}
	Without loss of generality, we can assume $R_1, R_2$ to be red. Consider the following two subsets of $R_k \cap B_j$:
	\begin{align*}
	T_B & := \{ v \in R_k \cap B_j \mid v \text{ has more than } \abs {(R_k \cap B_j) \setminus S_{R_k}} \text{ blue edges to } R_k \cap B_j \} \,,\\
	T_R & := \{ v \in R_k \cap B_j \mid v \text{ has more than } \abs {(R_k \cap B_j) \setminus S_{B_j}} \text{ red edges to } R_k \cap B_j \}
	\,.
	\end{align*}
	Taken together, $T_B$ and $T_R$ cover all vertices in $R_k \cap B_j$ with degree at least $2n/3$ in $G$. Indeed, fix $v \in (R_k \cap B_j) \setminus (T_B \cup T_R)$ and note that
	\begin{align}\label{eq:deg-in-intersection}
	\deg_G(v, R_k \cap B_j) \leq 2 \abs {R_k \cap B_j} - \abs {S_{R_k}} - \abs {S_{B_j}}
	\,.
	\end{align}
	Applying \cref{lem:contr-neighbourhood} with $S_{R_k} \subset R_k \cap B_j$ or $S_{B_j} \subset B_j \cap R_k$ playing the role of $S$ and using our assumption of $\abs {R_k \cap B_j} < n/3$ to exclude the possibility of $\abs {R_k \cap B_j} > n/3$, we get 
	\begin{align}\label{eq:deg-inside-blue}
	\deg_G(v, R_{3-k} \cap B_j)
	&\leq \abs {R_{3-k} \cap B_j} 
	< \abs {N_{H_{R_k}}(S_{R_k})} + \delta n \,,\\
	\label{eq:deg-inside-red}
	\deg_G(v, R_k \cap B_{3-j})
	&\leq \abs {R_k \cap B_{3-j}}
	< \abs {N_{H_{B_j}}(S_{B_j})} + \delta n
	\,.
	\end{align}
	Outside of $(R_1 \cup R_2) \cap (B_1 \cup B_2)$, the vertex $v$ may have at most $12 \delta n$ neighbours, so the three inequalities (\ref{eq:deg-in-intersection}), (\ref{eq:deg-inside-blue}) and (\ref{eq:deg-inside-red}) combine to
	\begin{align*}
	\deg_G(v)
	&< 2 \abs {R_k \cap B_j} - \abs {S_{R_k}} - \abs {S_{B_j}} + \abs {N_{H_{B_j}}(S_{B_j})} + \abs {N_{H_{B_j}}(S_{B_j})} + 14 \delta n \\
	&= 2 \abs {R_k \cap B_j} - c_{H_{R_k}}(S_{R_k}) - c_{H_{B_j}}(S_{B_j}) + 14 \delta n
	\,.
	\end{align*}
	Using $\abs {R_k \cap B_j} < n/3$ again, we observe that $\deg_G(v) < 2n/3$ by the contraction property of $S_{R_k}$ and $S_{B_j}$ as well as the choice of constants. This means that $T_B \cup T_R$ can only miss vertices in $R_k \cap B_j$ with degree below $2n/3$ in $G$. 
	
	It remains to show that there are at least $\delta n$ such vertices $v \in R_k \cap B_j$ that have $\deg_G(v) < 2n/3$. For a proof by contradiction, we assume otherwise and observe that every subset $S \subset R_k \cap B_j$ must satisfy $\abs {S \setminus (T_B \cup T_R)} < \delta n$. Now every vertex of $T_B$ must have a blue edge to $S_{R_k} \subset R_k \cap B_j$ by construction, and therefore cannot itself belong to $S_{R_k}$ by stability of $S_{R_k}$ in $H_{R_k}$. This shows that $T_B \subset N_{H_{R_k}}(S_{R_k})$. In particular, $\abs {S_{R_k} \setminus T_R} = \abs {S_{R_k} \setminus (T_B \cup T_R)} < \delta n$ holds by the disjointness of $S_{R_k}$ and $T_B$. Similarly, we also find $T_R \subset N_{H_{B_j}}(S_{B_j})$ and $\abs {S_{B_j} \setminus T_B} < \delta n$. Using $c_{H_{B_j}}(S_{B_j}) > \eta n$, this combines to 
	\[
	\abs {S_{R_k}} < \abs {T_R} + \delta n \leq \abs {N_{H_{B_j}}(S_{B_j})} + \delta n < \abs {S_{B_j}} - (\eta - \delta)n < \abs {S_{B_j}}
	\]
	by the choice of constants. But the same way, we can also deduce $\abs {S_{B_j}} < \abs {S_{R_k}}$ from $c_{H_{R_k}}(S_{R_k}) > \eta n$ and obtain the desired contradiction. So there must indeed be at least $\delta n$ vertices $v \in R_k \cap B_j$ with $\deg_G(v) < 2n/3$.
\end{proof}

The outcome of \cref{lem:diagonal-small-vertices} will of course contradict the Ore-type condition, so we are finally able to prove \cref{lem:structural-three-type3}, which is the last missing piece in the proof of \cref{thm:main-three}.

\begin{proof}[Proof of \cref{lem:structural-three-type3}]
	We choose the constants such that $1/n \ll \delta \ll \eta' \ll \eta \ll \gamma$ satisfies the requirements of \cref{lem:double-cover,lem:blue-comps,lem:cut-contr-diagonal,lem:diagonal-intersections-small,lem:diagonal-small-vertices}, in the last two cases with $\eta'$ playing the role of $\eta$. Assuring $\delta \leq \eta / 7$, it suffices to prove the statement of \cref{lem:structural-three-type3} with $(1 - 7\delta)n$ instead of $(1 - \eta)n$. Whenever $G$ is double-covered by three monochromatic components together containing at least $(1 - 7\delta)n$ vertices, there is nothing to show as their union cannot contain $\eta n$-contracting bad sets by \cref{lem:double-cover}. So by \cref{lem:blue-comps}, we may assume the existence of two components $B_1, B_2$ such that $(G, X, R_1, R_2, B_1, B_2)$ is evenly split $(n, \delta, \gamma)$-Ore.
	
	For a proof by contradiction, we assume that for every choice of $L \in \{ R_1, R_2, B_1, B_2 \}$, the union $H_L$ of the other three contains an $\eta n$-contracting set $S_L$. Then \cref{lem:cut-contr-diagonal} yields $\eta' n$-contracting sets $S_L'$ in $H_L$ such that $S_{R_1}', S_{B_j}' \subset R_1 \cap B_j$ and $S_{R_2}', S_{B_{3-j}}' \subset R_2 \cap B_{3-j}$ for some $j \in [2]$. Applying \cref{lem:diagonal-intersections-small} with $\eta'$ and $S'_\ast$ playing the roles of $\eta$ and $S_\ast$, these intersections $R_1 \cap B_j$ and $R_2 \cap B_{3-j}$ have fewer than $n/3$ vertices. Again using $\eta'$ and $S'_\ast$ in place of $\eta$ and $S_\ast$, \cref{lem:diagonal-small-vertices} guarantees that both intersections contain at least $\delta n$ vertices of degree below $2n/3$ in $G$. We can thus pick $u \in R_1 \cap B_j$ with $\deg_G(u) < 2n/3$ and $v \in (R_2 \cap B_{3-j}) \setminus N_X(u)$ with $\deg_G(v) < 2n/3$. However, this obviously contradicts the fact that $G \cup X$ is $(n, \gamma)$-Ore, thereby proving the lemma.
\end{proof}

\section*{Acknowledgements}
I would like to thank Richard Lang for suggesting this topic, many valuable discussions, and his guidance. I am also grateful to the referees for their suggestions.

\bibliographystyle{amsplain}
\bibliography{cyclecovers.bib}

\end{document}